\theoremstyle{definition}
\newtheorem{definition}{Definition}[section]
\newtheorem{notation}[definition]{Notation}
\newtheorem{example}[definition]{Example}
\newtheorem{remark}[definition]{Remark}
\newtheorem{note}[definition]{Note}
\theoremstyle{plain}
\newtheorem{theorem}[definition]{Theorem}
\newtheorem{lemma}[definition]{Lemma}
\newtheorem{proposition}[definition]{Proposition}
\newtheorem{corollary}[definition]{Corollary}
\newcommand{\beq}{\begin{equation}}
\newcommand{\eeq}{\end{equation}}
\newcommand{\bdfn}{\begin{definition}}
\newcommand{\edfn}{\end{definition}}
\newcommand{\bthm}{\begin{theorem}}
\newcommand{\ethm}{\end{theorem}}
\newcommand{\bprop}{\begin{proposition}}
\newcommand{\eprop}{\end{proposition}}
\newcommand{\bcor}{\begin{corollary}}
\newcommand{\ecor}{\end{corollary}}
\newcommand{\blem}{\begin{lemma}}
\newcommand{\elem}{\end{lemma}}
\newcommand{\bex}{\begin{example}}
\newcommand{\eex}{\end{example}}
\newcommand{\bxc}{\begin{exercise}}
\newcommand{\exc}{\end{exercise}}
\newcommand{\bntn}{\begin{notation}}
\newcommand{\entn}{\end{notation}}
\newcommand{\be}{\begin{enumerate}}
\newcommand{\ee}{\end{enumerate}}
\newcommand{\bce}{\begin{center}}
\newcommand{\ece}{\end{center}}
\newcommand{\bi}{\begin{itemize}}
\newcommand{\ei}{\end{itemize}}
\newcommand{\bt}{\begin{tabular}}
\newcommand{\et}{\end{tabular}}
\newcommand{\ba}{\begin{array}} 
\newcommand{\ea}{\end{array}}
\numberwithin{equation}{section}
\def\N{{\mathbb N}}
\newcommand {\bua} {\begin{eqnarray*}}
\newcommand {\eua} {\end {eqnarray*}}
\begin{document}
\title{Algebraic and Topological Results on\\ Lifting Properties in Residuated Lattices}
\author{Daniela CHEPTEA, George GEORGESCU and Claudia MURE\c SAN\\ \footnotesize University of Bucharest\\ \footnotesize Faculty of Mathematics and Computer Science\\ \footnotesize Academiei 14, RO 010014, Bucharest, Romania\\ \footnotesize Emails: d.cheptea@gmail.com; georgescu.capreni@yahoo.com; c.muresan@yahoo.com, cmuresan@fmi.unibuc.ro}
\date{\today }
\maketitle

\begin{abstract} We define lifting properties for universal algebras, which we study in this general context and then particularize to various such properties in certain classes of algebras. Next we focus on residuated lattices, in which we investigate lifting properties for Boolean and idempotent elements modulo arbitrary, as well as specific kinds of filters. We give topological characterizations to the lifting property for Boolean elements and several properties related to it, many of which we obtain by means of the reticulation.\\ {\em 2010 Mathematics Subject Classification:} Primary: 06F35; secondary: 03G25, 03C05, 08A30.\\ {\em Key words and phrases:} universal algebra; congruence; (local, semilocal, maximal, hyperarchimedean, simple, semisimple) residuated lattice; filter of a (bounded distributive, residuated) lattice; (Boolean, Idempotent) Lifting Property; Boolean center; idempotent element; regular element; nilpotent element; (prime, maximal) spectrum; Stone topology; radical; reticulation; functor.\end{abstract}

\section{Introduction}

Idempotent Lifting Property (abbreviated ILP, or LIP in some works) has occurred in ring theory in relation to clean rings and exchange rings (\cite{lam}, \cite{nic}). A ring has the ILP iff its idempotents can be lifted modulo every left ideal (\cite{nic}). Rings with ILP have been given many characterizations, some of algebraic nature, some topological (\cite[Theorem $1.7$]{mcgov}). Commutative rings with ILP coincide to commutative clean rings and to commutative exchange rings (\cite{nic}).

A lifting property modulo the radical for Boolean elements in MV--algebras has been used in \cite{figele} for characterizing maximal MV--algebras, then generalized to BL--algebras in \cite{leo} and residuated lattices in \cite{eu3}, \cite{eu}. Residuated lattices with the Boolean Lifting Property (BLP) modulo the radical have been introduced and studied from the algebraic point of view in \cite{eu3}. The BLP modulo the radical in a residuated lattice $A$ means that all Boolean elements of the quotient residuated lattice $A/{\rm Rad}(A)$ are classes of Boolean elements of $A$ modulo ${\rm Rad}(A)$. Maximal residuated lattices with BLP modulo the radical have turned out to satisfy strong representation theorems.

In \cite{ggcm}, we have generalized the BLP for residuated lattices to all filters instead of just the radical, we have identified several classes of residuated lattices with BLP, such as Boolean algebras, chains, local and hyperarchimedean residuated lattices, and we have obtained several characterizations of the BLP, as well as certain structure theorems for residuated lattices with BLP.

In the present paper, we continue the study we have begun in \cite{ggcm}, obtaining further algebraic properties, as well as topological characterizations for residuated lattices with BLP, and we also study lifting properties for other types of elements, all of which were inspired by the ILP in the case of rings. We start by introducing lifting properties in the general context of universal algebras, from which all particular lifting properties naturally occur. We exemplify by certain such properties for unitary commutative rings, for bounded distributive lattices, and then we restrict out research to residuated lattices, where, along with BLP, we have lifting properties for idempotent elements (ILP) and for regular elements (RLP). It turns out that the RLP is trivial, but the BLP and the ILP are not, nor do they coincide, except in some remarkable particular cases. BLP turns out to relate to important topological properties, for the study of which the reticulation of a residuated lattice proves very useful. The reticulation for residuated lattices, that has been defined in \cite{eu1}, \cite{eu} and studied in \cite{eu1}, \cite{eu}, \cite{eu2}, \cite{eu4}, \cite{eu5}, \cite{eu7}, is essentially the construction of a functor from the category of residuated lattices to the category of bounded distributive lattices which takes residuated lattices to bounded distributive lattices with the same topological structure for the prime spectrum. As announced and exemplified in \cite{eu2}, \cite{eu5}, this functor permitts a very easy and fruitful transfer of properties between the two categories in question.

Section \ref{preliminaries} of our paper is a brief introduction to the theory of residuated lattices, in which we collect previously known results that we use in the sequel. The following sections consist of new and original results belonging to the authors of the present paper, and very few previously known results, which we mention when they occur and out of which we provide some with new proofs.

Section \ref{lpsunivalg} contains a generic definition for lifting properties in universal algebras, which we study in this general context. We then exemplify how this general theory can be applied to particular classes of algebras and specific kinds of lifting properties. We feel that these examples illustrate the potential of this unified theory for the study of lifting properties.

Section \ref{lpsreslat} is concerned with an algebraic study of the BLP and ILP for the class of residuated lattices and some of its subclasses. These two types of lifting properties are studied individually, as well as in relation to each other. While, for instance, in MV--algebras the two lifting properties coincide, we provide an example that shows that this is not the case in every residuated lattice, and, while we evidentiate certain classes of residuated lattices which have BLP or ILP or both of these lifting properties, we also give an example which proves that these classes do not cover all residuated lattices satisfying these properties.

Section \ref{specretic} begins by a recollection of a series of well known results on the prime and maximal spectrum of a residuated lattice, followed by a brief description of the reticulation of a residuated lattice, in which we provide the definition of the reticulation and summarize those of its properties that we use in the sequel. We then prove that the reticulation functor both preserves and reflects the BLP, and provide a first illustration of the usefulness of this functorial property.

The first part of Section \ref{topchar} is concerned with residuated lattices with the property that any of their prime filters is included in a unique maximal filter. This is similar to the property that defines Gelfand rings (\cite{joh}) and conormal lattices (\cite{whcor}). We call the residuated lattices with this property Gelfand residuated lattices, and we obtain many characterizations for them by transferring analogous results on conormal lattices through the reticulation functor. We also obtain a series of topological characterizations of residuated lattices with BLP and some of their subclasses, by using the fact that all residuated lattices with BLP are Gelfand.

\section{Preliminaries}
\label{preliminaries}

Throughout this paper, every algebraic structure will be designated by its support set, whenever this is useful and there is no danger of confusion.

The set of the natural numbers will be denoted by $\N $, and we shall use the notation $\N ^*$ for the set of the nonzero natural numbers.
 
In this section we recall several known notions and results that we use in the rest of the paper. For a further study of these notions and results, as well as others that the reader may need to review, one may consult \cite{bal}, \cite{gal}, \cite{haj}, \cite{ior}, \cite{kow}, \cite{pic}, \cite{tur}. 

\begin{definition} A {\em commutative integral bounded residuated lattice} (in brief, a {\em residuated lattice}) is an algebra $(A,\vee ,\wedge ,\odot ,\rightarrow ,0,1)$, where $\vee ,\wedge ,\odot ,\rightarrow $ are binary operations on $A$ (called {\em join, meet, multiplication} and {\em implication} or {\em residuum}, respectively) and $0,1\in A$, such that $(A,\vee ,\wedge ,0,1)$ is a bounded lattice (whose partial order will be denoted by $\leq $), $(A,\odot ,1)$ is a commutative monoid and the following equivalence, called {\em the law of residuation}, holds for all $a,b,c\in A$: $a\leq b\rightarrow c$ iff $a\odot b\leq c$.\end{definition}

In any residuated lattice $A$, the following derivative operations are defined: for all $a,b\in A$, $a\leftrightarrow b=(a\rightarrow b)\wedge (b\rightarrow a)$ (the {\em equivalence} or the {\em biresiduum}) and $\neg \, a=a\rightarrow 0$ (the {\em negation}). We shall also use the alternate notation for the equivalence: $d(a,b)=a\leftrightarrow b$, for all $a,b\in A$. Also, for all $a\in A$ and all $n\in \N ^{*}$, we shall denote by $a^n=\underbrace{\textstyle a\odot \ldots \odot a}_{\textstyle n\ {\rm of}\ a}$ and by $a^{0}=1$.

Residuated lattices are non--empty, because they contain the constants $0$ and $1$. The one--element residuated lattice (that is the residuated lattice with $0=1$) is called {\em the trivial residuated lattice.} Any residuated lattice with at least two elements (that is with $0\neq 1$) is said to be {\em non--trivial.}

Morphisms of residuated lattices are called, in brief, {\em residuated lattice morphisms}.

If $A$ is a residuated lattice, then $\odot $ is distributive with respect to $\vee $ (see Lemma \ref{latrez}, (\ref{latrez9}), below), while the bounded lattice $(A,\vee ,\wedge ,0,1)$ is not necessarily distributive, but it is uniquely complemented (see Lemma \ref{cbool}, (\ref{cbool0}), below). The residuated lattice $A$ is said to be {\em distributive} iff its underlying bounded lattice, $(A,\vee ,\wedge ,0,1)$, is distributive.

Two classes of examples of distributive residuated lattices are chains (see \cite{ggcm} for how they can be organized as residuated lattices) and Boolean algebras, which, as pointed out in \cite{ggcm}, can be organized as residuated lattices in only one way, namely: if ${\cal A}=(A,\vee ,\wedge ,\bar{\ },0,1)$ is a Boolean algebra, then the only residuated lattice whose underlying bounded lattice is ${\cal A}$ is $(A,\vee ,\wedge ,\odot ,\rightarrow ,0,1)$, where $\odot =\wedge $ and $\rightarrow $ is the Boolean implication: for all $a,b\in A$, $a\rightarrow b=\overline{a}\vee b$; in such a residuated lattice, $\neg \, a=\overline{a}$ for all $a\in A$, as shown by Lemma \ref{cbool}, (\ref{cbool0}), below.

We shall call any linearly orderred residuated lattice a {\em residuated chain.}

We make the usual convention concerning the priority assigned to the operations defined above and all other operations we shall be using in what follows: constants and variables have the highest priority, exponentiation is next, negation follows, and the lowest priority goes to binary operations of any kind.

Throughout the rest of this section, $A$ will be an arbitrary residuated lattice and $L$ will be an arbitrary bounded distributive lattice.

Although, for instance, the commutativity of certain binary operations makes some of the properties below redundant, we have chosen to write these properties, for the sake of clarity.

\begin{lemma} For all $a,b,x,y\in A$, we have:
\begin{enumerate}
\item\label{latrez9} $a\odot (x\vee y)=(a\odot x)\vee (a\odot y)$;
\item\label{latrez8} $a\odot b\leq a$ and $a\odot b\leq b$; consequently, $a\odot b\leq a\wedge b$ and $a\odot 0=0$;
\item\label{latrez6} if $a\leq x$ and $b\leq y$, then $a\odot b\leq x\odot y$; consequently, for all $n\in \N ^*$, $a^n\leq a$, and, moreover, for all $k,n\in \N ^*$ such that $k\leq n$, we have $a^n\leq a^k$;
\item\label{latrez10} if $a\vee b=1$, then $a\wedge b=a\odot b$;
\item\label{latrez5} $a\leq b$ iff $a\rightarrow b=1$; $1\rightarrow a=a$; consequently, $\neg \, 0=1$ and $\neg \, 1=0$;
\item\label{latrez7} $a\leq \neg \, \neg \, a$; $\neg \, \neg \, \neg \, a=\neg \, a$; if $a\leq b$, then $\neg \, b\leq \neg \, a$;
\item\label{latrez1} $a\odot \neg \, a=0$; $a\odot b=0$ iff $a\leq \neg \, b$ iff $b\leq \neg \, a$.\end{enumerate}\label{latrez}\end{lemma}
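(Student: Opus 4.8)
The plan is to prove the seven identities of Lemma \ref{latrez} more or less in the order they are stated, since each one either follows directly from the law of residuation or can be reduced to an earlier item. Throughout, the only tools are the monoid axioms for $(A,\odot ,1)$, the lattice structure, and the residuation equivalence $a\leq b\rightarrow c \Leftrightarrow a\odot b\leq c$; I would also use freely the standard fact that for any fixed $a$ the map $x\mapsto a\odot x$ is order-preserving, which itself is an immediate consequence of residuation (if $x\leq y$ then from $a\odot x\leq a\odot x$ and residuation $x\leq a\rightarrow (a\odot x)$, hence $y\leq a\rightarrow(a\odot x)$, hence $a\odot y\leq a\odot x$ — wait, that gives the wrong direction, so more carefully: $x\leq y\leq a\rightarrow(a\odot y)$ gives $a\odot x\leq a\odot y$).

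For (\ref{latrez9}), the distributivity of $\odot$ over $\vee$: since $x,y\leq x\vee y$, monotonicity gives $a\odot x, a\odot y\leq a\odot(x\vee y)$, hence $(a\odot x)\vee(a\odot y)\leq a\odot(x\vee y)$. For the reverse, put $z=(a\odot x)\vee(a\odot y)$; then $a\odot x\leq z$ and $a\odot y\leq z$ give $x\leq a\rightarrow z$ and $y\leq a\rightarrow z$ by residuation, so $x\vee y\leq a\rightarrow z$, whence $a\odot(x\vee y)\leq z$. For (\ref{latrez8}): $a\odot b\leq a$ follows from $b\leq 1=a\rightarrow a$ (using residuation on $a\odot a\leq a$? no — rather $a\leq a$ gives $a\odot 1\leq a$, i.e.\ $a\leq a$, not helpful) — cleaner: from $a\leq a$ and $a=a\odot 1$ we get nothing; instead use $a\odot b\leq a\odot b$ so $b\leq a\rightarrow(a\odot b)$, and separately I want $a\odot b\leq a$, which is: $a\odot b\leq a \Leftrightarrow b\leq a\rightarrow a = 1$, true. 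By commutativity also $a\odot b\leq b$. Then $a\odot b\leq a\wedge b$, and $a\odot 0\leq 0$ gives $a\odot 0=0$. Item (\ref{latrez6}) is monotonicity in both arguments (compose the one-argument version with commutativity), and the consequences about $a^n$ follow by induction using $a^n=a^{n-1}\odot a\leq a^{n-1}$ and transitivity. Item (\ref{latrez10}): by (\ref{latrez8}) $a\odot b\leq a\wedge b$; conversely $a\wedge b=(a\wedge b)\odot 1=(a\wedge b)\odot(a\vee b)=((a\wedge b)\odot a)\vee((a\wedge b)\odot b)$ by (\ref{latrez9}), and $(a\wedge b)\odot a\leq b\odot a$ and $(a\wedge b)\odot b\leq a\odot b$ by (\ref{latrez6}), so $a\wedge b\leq a\odot b$.

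For (\ref{latrez5}): $a\rightarrow b=1$ means $1\leq a\rightarrow b$, i.e.\ $1\odot a\leq b$, i.e.\ $a\leq b$; and $1\rightarrow a = a$ because $1\rightarrow a\leq 1\rightarrow a$ gives $(1\rightarrow a)\odot 1\leq a$ so $1\rightarrow a\leq a$, while $a\odot 1=a\leq a$ gives $a\leq 1\rightarrow a$. The consequences $\neg 0=0\rightarrow 0=1$ and $\neg 1=1\rightarrow 0=0$ are instances. For (\ref{latrez7}): $a\leq\neg\neg a$ is $a\leq(a\rightarrow 0)\rightarrow 0$, equivalently $a\odot(a\rightarrow 0)\leq 0$, equivalently $a\rightarrow 0\leq a\rightarrow 0$, true (using commutativity of $\odot$ and residuation). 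Antitonicity of $\neg$ (if $a\leq b$ then $\neg b\leq\neg a$): from $a\leq b$ and $b\odot\neg b\leq 0$ (which is the $a=b$ case of the first part of (\ref{latrez1})) we get $a\odot\neg b\leq 0$, hence $\neg b\leq a\rightarrow 0=\neg a$. Then $\neg\neg\neg a=\neg a$: applying $\neg$ to $a\leq\neg\neg a$ and using antitonicity gives $\neg\neg\neg a\leq\neg a$, while replacing $a$ by $\neg a$ in $a\leq\neg\neg a$ gives $\neg a\leq\neg\neg\neg a$. Finally (\ref{latrez1}): $a\odot\neg a=a\odot(a\rightarrow 0)\leq 0$ directly from residuation applied to $a\rightarrow 0\leq a\rightarrow 0$; and $a\odot b\leq 0 \Leftrightarrow b\leq a\rightarrow 0=\neg a \Leftrightarrow a\leq b\rightarrow 0=\neg b$, the middle equivalences being residuation (twice, using commutativity).

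There is no real obstacle here — every step is a two- or three-line manipulation — so the only thing requiring care is bookkeeping: making sure each item is proved using only the axioms and items with smaller index, and being careful about which direction of an inequality residuation supplies (it is an equivalence, so both directions are available, but one must apply it to the correct trivial inequality $t\leq t$ to extract what is wanted). The mildly delicate point is establishing monotonicity of $x\mapsto a\odot x$ cleanly at the very start, since several later items lean on it; once that is in hand the rest is routine.
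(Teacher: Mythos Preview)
The paper does not actually prove this lemma: it is stated in Section~\ref{preliminaries} (Preliminaries) as a collection of well-known facts about residuated lattices, with references to the literature but no argument given. So there is no ``paper's own proof'' to compare against.

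Your argument is mathematically correct: every step is a valid application of residuation, the monoid and lattice axioms, and previously established items. Two small remarks. First, in proving (\ref{latrez8}) you write ``$b\leq a\rightarrow a=1$, true'' without justifying $a\rightarrow a=1$; this is immediate from $a\odot 1=a\leq a$ and residuation, but it should be said explicitly since it is not one of the axioms. Second, you note yourself that you invoke the first clause of (\ref{latrez1}) (namely $b\odot\neg b\leq 0$) while proving the antitonicity part of (\ref{latrez7}); this is not circular, because your proof of $a\odot\neg a=0$ uses only residuation, but it does contradict your stated bookkeeping goal of proving each item from items of smaller index. In a cleaned-up write-up you should either reorder the items or, more simply, prove antitonicity of $\neg$ directly: if $a\leq b$, then for any $c$, $b\leq c$ implies $a\leq c$, so in particular $\neg b\leq\neg b$ gives $b\odot\neg b\leq 0$, hence $a\odot\neg b\leq 0$ by monotonicity of $\odot$, hence $\neg b\leq\neg a$ --- which is exactly what you wrote, just noting that the needed instance of (\ref{latrez1}) is self-contained. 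Finally, the stream-of-consciousness false starts (``wait, that gives the wrong direction'', ``not helpful'') should of course be excised from any final version.
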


We shall denote by ${\cal I}(A)$ the set of the {\em idempotent elements} of $A$ and by ${\rm Reg}(A)$ the set of the {\em regular elements} of $A$, that is:

\begin{itemize}
\item ${\cal I}(A)=\{e\in A\ |\ e^2=e\}=\{e\in A\ |\ (\forall \, n\in \N ^*)\, (e^n=e)\}$, where the first equality is the definition and the second equality is trivial; $A$ is called a {\em $G\ddot{o}del$ algebra} iff $\odot =\wedge $ in $A$; according to \cite[Proposition $3.1$]{eu2}, ${\cal I}(A)=A$ iff $A$ is a ${\rm G\ddot{o}del}$ algebra; for instance, the residuated lattices induced by Boolean algebras are ${\rm G\ddot{o}del}$ algebras;
\item ${\rm Reg}(A)=\{e\in A\ |\ \neg \, \neg \, e=e\}=\{\neg \, e\ |\ e\in A\}$, where the first equality is the definition and the second equality is straightforward from Proposition \ref{latrez}, (\ref{latrez7}); $A$ is said to be {\em involutive} iff ${\rm Reg}(A)=A$.\end{itemize}

An element $a\in A$ is said to be {\em nilpotent} iff $a^n=0$ for some $n\in \N ^*$.

See the works cited at the beginning of this section for the following particular kinds of residuated lattices: MV--algebras and BL--algebras; the first form a subclass of the latter; more precisely, MV--algebras are exactly the involutive BL--algebras.

The set of the complemented elements of $L$ is denoted by ${\cal B}(L)$ and called {\em the Boolean center of $L$}. It is well known and immediate that ${\cal B}(L)$, with the bounded lattice operations induced by those of $L$, together with the complementation operation, is a Boolean algebra.

The set of the complemented elements of the bounded lattice $(A,\vee ,\wedge ,0,1)$ is denoted by ${\cal B}(A)$ and called {\em the Boolean center of the residuated lattice $A$}. $({\cal B}(A),\vee ,\wedge ,\neg \, ,0,1)$ is a Boolean algebra, with the operations induced by those of $A$, where the complementation is given by the negation in $A$ (see Lemma \ref{cbool}, (\ref{cbool0}), below).

Obviously, ${\cal B}({\cal A})=A$ iff ${\cal A}$ is induced by a Boolean algebra as described above.

\begin{lemma}
For any $a\in A$ and any $e,f\in {\cal B}(A)$, we have:

\begin{enumerate}

\item\label{cbool1} $a\in {\cal B}(A)$ iff $a\vee \neg \, a=1$;
\item\label{cbool0} the complement of $e$ in the Boolean algebra ${\cal B}(A)$ is $\neg \, e$; consequently, ${\cal B}(A)\subseteq {\rm Reg}(A)$;
\item\label{cbool3} $e\odot f=e\wedge f$; consequently, $e^2=e$ and, therefore, ${\cal B}(A)\subseteq {\cal I}(A)$;
\item\label{cbool2} $e\rightarrow a=\neg \, e\vee a$.
\end{enumerate}\label{cbool}\end{lemma}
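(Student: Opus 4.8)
The statement to prove is Lemma~\ref{cbool}, which collects four basic facts about the Boolean center $\mathcal{B}(A)$ of a residuated lattice.

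\medskip

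\textbf{Proof proposal.} The plan is to prove the four items roughly in the order (\ref{cbool1}), (\ref{cbool0}), (\ref{cbool3}), (\ref{cbool2}), using the already-established Lemma~\ref{latrez} throughout.

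For (\ref{cbool1}), the forward direction is immediate: if $a \in \mathcal{B}(A)$ then $a$ has a complement $a'$ in the bounded lattice, so $a \vee a' = 1$ and $a \wedge a' = 0$; from $a \wedge a' = 0$ and Lemma~\ref{latrez}(\ref{latrez1}) one gets $a' \leq \neg a$, hence $1 = a \vee a' \leq a \vee \neg a \leq 1$. For the converse, suppose $a \vee \neg a = 1$. I claim $\neg a$ is a complement of $a$: the join is $1$ by hypothesis, and for the meet, Lemma~\ref{latrez}(\ref{latrez10}) applies (since $a \vee \neg a = 1$) to give $a \wedge \neg a = a \odot \neg a$, which is $0$ by Lemma~\ref{latrez}(\ref{latrez1}). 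So $a \in \mathcal{B}(A)$ and its complement is $\neg a$ --- which simultaneously proves the first assertion of (\ref{cbool0}); the inclusion $\mathcal{B}(A) \subseteq \mathrm{Reg}(A)$ then follows because a complement in a lattice is unique, so for $e \in \mathcal{B}(A)$ the complement of $e$ is $\neg e$ and the complement of $\neg e$ is $\neg \neg e$; but the complement of $\neg e$ is also $e$ (complementation is an involution), whence $\neg \neg e = e$, i.e. $e \in \mathrm{Reg}(A)$.

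For (\ref{cbool3}), let $e, f \in \mathcal{B}(A)$. The inequality $e \odot f \leq e \wedge f$ is Lemma~\ref{latrez}(\ref{latrez8}). For the reverse, I would use that $e \vee \neg e = 1$ (by (\ref{cbool1})) and distributivity of $\odot$ over $\vee$ (Lemma~\ref{latrez}(\ref{latrez9})): write $e \wedge f = (e \wedge f) \odot 1 = (e \wedge f) \odot (e \vee \neg e) = ((e \wedge f) \odot e) \vee ((e \wedge f) \odot \neg e)$; the second disjunct is below $f \odot (e \odot \neg e)$... actually more cleanly, $(e\wedge f)\odot \neg e \le e \odot \neg e = 0$ using Lemma~\ref{latrez}(\ref{latrez8}) then (\ref{latrez1}), and $(e\wedge f)\odot e \le f \odot e$ using monotonicity (Lemma~\ref{latrez}(\ref{latrez6})) and $e\wedge f\le f$; so $e\wedge f \le f\odot e = e\odot f$. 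Taking $f = e$ gives $e^2 = e$, hence $\mathcal{B}(A) \subseteq \mathcal{I}(A)$.

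For (\ref{cbool2}), let $e \in \mathcal{B}(A)$, $a \in A$. I would prove $e \rightarrow a = \neg e \vee a$ by antisymmetry. For $\neg e \vee a \leq e \rightarrow a$: by the law of residuation this is equivalent to $(\neg e \vee a) \odot e \leq a$; distributing, $(\neg e \odot e) \vee (a \odot e) = 0 \vee (a \odot e) = a \odot e \leq a$ (using Lemma~\ref{latrez}(\ref{latrez1}) and (\ref{latrez8})). For $e \rightarrow a \leq \neg e \vee a$: multiply by $1 = e \vee \neg e$ and distribute --- $(e \rightarrow a) = (e \rightarrow a)\odot(e \vee \neg e) = ((e\rightarrow a)\odot e) \vee ((e\rightarrow a)\odot \neg e)$; the first term is $\leq a$ by residuation (taking $x = e\rightarrow a$ in $x \leq e \rightarrow a$), and the second is $\leq \neg e$ by Lemma~\ref{latrez}(\ref{latrez8}); hence $e \rightarrow a \leq a \vee \neg e$.

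\medskip

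The main obstacle, such as it is, is item (\ref{cbool3}) (equivalently the $\odot$-vs-$\wedge$ coincidence on idempotent-like elements): one must resist trying to invoke distributivity of the underlying lattice, which need not hold, and instead drive everything through $e \vee \neg e = 1$ together with Lemma~\ref{latrez}(\ref{latrez10}) or the distributivity of $\odot$ over $\vee$. Everything else is a short chain of applications of Lemma~\ref{latrez} and uniqueness of complements in a (bounded) lattice.
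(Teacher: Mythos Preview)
The paper does not supply its own proof of Lemma~\ref{cbool}: it is listed among the preliminary, previously known results at the start of Section~\ref{preliminaries}, with references to the literature. Your write-up is therefore not competing with any argument in the paper, and for items (\ref{cbool1}), (\ref{cbool3}), (\ref{cbool2}) it is a clean and correct proof driven by Lemma~\ref{latrez} exactly as intended.

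There is, however, one genuine gap in your treatment of (\ref{cbool0}). You write ``a complement in a lattice is unique'' and then use this to conclude $\neg\neg e=e$. That statement is false for bounded lattices in general; uniqueness of complements is well known to fail outside the distributive case, and the paper explicitly notes that the underlying lattice of $A$ need not be distributive --- in fact it cites precisely Lemma~\ref{cbool}(\ref{cbool0}) as the \emph{reason} the lattice is uniquely complemented. So you are assuming the very thing that (\ref{cbool0}) is meant to establish. The fix is short and uses the same trick you already exploit in (\ref{cbool3}) and (\ref{cbool2}): if $b$ is any complement of $e$, then $e\wedge b=0$ gives $e\odot b=0$ (Lemma~\ref{latrez}(\ref{latrez8})), hence $b\le\neg e$ (Lemma~\ref{latrez}(\ref{latrez1})); and conversely
\[
\neg e=\neg e\odot 1=\neg e\odot(e\vee b)=(\neg e\odot e)\vee(\neg e\odot b)=0\vee(\neg e\odot b)\le b,
\]
using Lemma~\ref{latrez}(\ref{latrez9}),(\ref{latrez1}),(\ref{latrez8}). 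Thus $b=\neg e$, complements are unique, and your involution argument for $\neg\neg e=e$ now goes through.
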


\begin{remark}\begin{enumerate}\item\label{imme1} By Lemma \ref{cbool}, (\ref{cbool0}) and (\ref{cbool3}), $\{0,1\}\subseteq {\cal B}(A)\subseteq {\cal I}(A)\cap {\rm Reg}(A)$.
\item\label{imme2} But, as shown by Examples \ref{exlpdif} and \ref{nice}, not only do the converse inclusions in Lemma \ref{cbool}, (\ref{cbool0}) and (\ref{cbool3}), not always hold, but neither does the converse of the inclusion ${\cal B}(A)\subseteq {\cal I}(A)\cap {\rm Reg}(A)$, and, moreover, no inclusion between ${\cal I}(A)$ and ${\rm Reg}(A)$ holds in every residuated lattice $A$.\end{enumerate}\label{imme}\end{remark}

Lemma \ref{cbool}, (\ref{cbool3}), shows that, for every $e\in {\cal B}(A)$, $[e)=\{a\in A\ | e\leq a\}$.

By defining, for any residuated lattices $R$ and $S$ and any residuated lattice morphism $\varphi :R\rightarrow S$, ${\cal B}(\varphi ):{\cal B}(R)\rightarrow {\cal B}(S)$, for all $e\in {\cal B}(R)$, ${\cal B}(\varphi )(e)=\varphi (e)$, we get a well--defined Boolean morphism ${\cal B}(\varphi )$, and thus a covariant functor ${\cal B}$ from the category of residuated lattices to the category of Boolean algebras.

Similarly, if we define, for any bounded distributive lattices $M$ and $P$ and any bounded lattice morphism $h:M\rightarrow P$, ${\cal B}(h):{\cal B}(M)\rightarrow {\cal B}(P)$, for all $e\in {\cal B}(M)$, ${\cal B}(h)(e)=h(e)$, then we get a well--defined Boolean morphism ${\cal B}(h)$, and thus a covariant functor ${\cal B}$ from the category of bounded distributive lattices to the category of Boolean algebras.

We consider that the coincidence of notations between the two functors defined above poses no danger of confusion.

An element $a\in A$ is said to be {\em archimedean} iff $a^n\in {\cal B}(A)$ for some $n\in \N ^*$. $A$ is said to be {\em hyperarchimedean} iff all of its elements are archimedean. As examples of hyperarchimedean residuated lattices, we have Boolean algebras, whose underlying set equals their Boolean center.

A {\em filter of $A$} is a non--empty subset $F$ of $A$ such that, for all $a,b\in A$:

\begin{itemize}
\item if $a,b\in F$, then $a\odot b\in F$;
\item if $a\in F$ and $a\leq b$, then $b\in F$.\end{itemize}

The set of filters of $A$ will be denoted by ${\rm Filt}(A)$. $\{1\}\in {\rm Filt}(A)$ and $A\in {\rm Filt}(A)$; $\{1\}$ is called {\em the trivial filter} and $A$ is called {\em the improper filter of $A$.} Any $F\in {\rm Filt}(A)$ such that $F\neq A$ is called a {\em proper filter of $A$.} Obviously, $1$ belongs to any filter of $A$, while a filter $F$ of $A$ is proper iff $0\notin F$.

Notice that, for any filter $F$ of $A$ and any $a,b\in A$, we have the following equivalences: $a,b\in F$ iff $a\odot b\in F$ iff $a\wedge b\in F$ (see Proposition \ref{latrez}, (\ref{latrez8})).

Clearly, an arbitrary intersection of filters of $A$ is a filter of $A$. For any $X\subseteq A$, we shall denote by $[X)$ the smallest filter of $A$ which includes $X$, that is $\displaystyle [X)=\bigcap _{\stackrel{\scriptstyle F\in {\rm Filt}(A)}{\scriptstyle F\supseteq X}}F$; $[X)$ is called {\em the filter of $A$ generated by $X$}. For any $x\in A$, $[\{x\})$ is denoted, simply, by $[x)$, and it is called {\em the principal filter of $A$ generated by $x$}.

For all $x\in A$, $[x)=\{a\in A\ |\ (\exists \, n\in \N ^*)\, (x^n\leq a)\}$. Clearly, if $x$ is an idempotent element of $A$, then $[x)=\{a\in A\ |\ x\leq a\}$. Consequently, by Lemma \ref{cbool}, (\ref{cbool3}), if $x\in {\cal B}(A)$, then $[x)=\{a\in A\ |\ x\leq a\}$. The set of the principal filters of $A$ will be denoted by ${\rm PFilt}(A)$.

It is immediate that, for all $a,b\in A$:

\begin{itemize}
\item for all $n\in \N ^*$, $[a^n)=[a)$; 
\item $[a)=\{1\}$ iff $a=1$; $[a)=A$ iff $a$ is nilpotent;
\item $[a)\vee [b)=[a\odot b)=[a\wedge b)$; $[a)\cap [b)=[a\vee b)$.\end{itemize}

It is easy to notice that, for any $X\subseteq A$, $[X)=\{a\in A\ |\ (\exists \, n\in \N ^*)\, (\exists \, x_1,\ldots ,x_n\in X)\, (x_1\odot \ldots \odot x_n\leq a)\}$. From this it follows that any finitely generated filter of a residuated lattice is principal, because, clearly, $[\{x_1,\ldots ,x_n\})=[x_1\odot \ldots \odot x_n)$, for any $n\in \N ^*$ and any $x_1,\ldots ,x_n\in A$. Thus, in particular, any finite filter of a residuated lattice is principal, since, trivially, $F=[F)$ for all $F\in {\rm Filt}(A)$.

For any $F,G\in {\rm Filt}(A)$, we denote $F\vee G=[F\cup G)$. More generally, for any family $(F_i)_{i\in I}\subseteq {\rm Filt}(A)$, we shall denote $\displaystyle \bigvee _{i\in I}F_i=[\bigcup _{i\in I}F_i)$. Then $({\rm Filt}(A),\vee ,\cap ,\{1\},A)$ becomes a bounded distributive lattice (and clearly a complete one), orderred by set inclusion. Also, ${\rm PFilt}(A)$ is a bounded sublattice of ${\rm Filt}(A)$.

For any $F,G\in {\rm Filt}(A)$, $F\vee G=\{a\in A\ |\ (\exists \, x\in F)\, (\exists \, y\in G)\, (x\odot y\leq a)\}$.

The maximal elements of $({\rm Filt}(A)\setminus \{A\},\subseteq )$ are called {\em maximal filters of $A$.} The set of the maximal filters of $A$ is denoted by ${\rm Max}(A)$ and called {\em the maximal spectrum of $A$.}

A proper filter $P$ of $A$ with the property that, for all $a,b\in A$, if $a\vee b\in P$ then $a\in P$ or $b\in P$, is called a {\em prime filter of $A$.} The set of the prime filters of $A$ is denoted by ${\rm Spec}(A)$ and called {\em the (prime) spectrum of $A$.}

The following inclusion holds: ${\rm Max }(A)\subseteq {\rm Spec}(A)$.

Any proper filter of $A$ is included in a maximal (and thus a prime) filter of $A$. Moreover, any filter of $A$ equals the intersection of the prime filters that include it. Consequently, the intersection of all the prime filters of $A$ is $\{1\}$. Any nonzero element of $A$ is contained in a maximal (and thus a prime) filter of $A$.

\begin{lemma} If $M$ is a filter of $A$, then the following are equivalent:

\begin{itemize}
\item $M$ is a maximal filter of $A$;
\item any $a\in A$ satisfies: $a\notin M$ iff there exists an $n\in \N ^*$ such that $\neg \, a^n\in M$.\end{itemize}\label{caractfiltmax}\end{lemma}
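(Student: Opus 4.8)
For the ``only if'' direction, suppose $M$ is a maximal filter and let $a \in A$. If $a \notin M$, then the filter $M \vee [a)$ strictly contains $M$, hence by maximality $M \vee [a) = A$, so $0 \in M \vee [a)$. Using the description $F \vee G = \{x \in A \mid (\exists\, y \in F)(\exists\, z \in G)(y \odot z \leq x)\}$ together with the form of the principal filter $[a) = \{x \in A \mid (\exists\, n \in \N^*)(a^n \leq x)\}$, I get elements $m \in M$ and $n \in \N^*$ with $m \odot a^n \leq 0$, i.e. $m \odot a^n = 0$. By Lemma \ref{latrez}, (\ref{latrez1}), $m \odot a^n = 0$ is equivalent to $m \leq \neg\, a^n$, and since $m \in M$ and $M$ is upward closed, $\neg\, a^n \in M$. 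Conversely, if $\neg\, a^n \in M$ for some $n \in \N^*$ but $a \in M$ as well, then $a^n \in M$, so $a^n \wedge \neg\, a^n \in M$; but $a^n \odot \neg\, a^n = 0$ and $a^n \wedge \neg\, a^n \geq a^n \odot \neg\, a^n$ doesn't immediately give $0 \in M$, so instead I note $a^n, \neg\, a^n \in M$ forces $a^n \odot \neg\, a^n = 0 \in M$ by the first filter axiom, contradicting properness of $M$. Hence $a \notin M$.

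For the ``if'' direction, assume every $a \in A$ satisfies the stated equivalence. First $M$ is proper: if $0 \in M$ then $M = A$, but then taking $a = 1$ we'd need $1 \notin M$, false; more carefully, properness follows because $a = 0 \notin M$ must hold (otherwise $M = A$ and the equivalence applied to $a=0$ would be contradictory, since $\neg\, 0^n = \neg\, 0 = 1 \in M$ would force $0 \notin M$). To show maximality, let $F$ be a filter with $M \subsetneq F$, and pick $a \in F \setminus M$. By hypothesis there is $n \in \N^*$ with $\neg\, a^n \in M \subseteq F$. Since $a \in F$, also $a^n \in F$, so $a^n \odot \neg\, a^n \in F$; but $a^n \odot \neg\, a^n = 0$ by Lemma \ref{latrez}, (\ref{latrez1}), whence $0 \in F$ and $F = A$. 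Therefore $M$ is maximal.

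**I don't anticipate a serious obstacle here;** the proof is a routine unwinding of definitions. The only point requiring a little care is the properness bookkeeping in the ``if'' direction and making sure the right instance of Lemma \ref{latrez}, (\ref{latrez1}) (namely $a \odot \neg\, a = 0$, applied to $a^n$) is invoked rather than the weaker $a \odot b \leq a \wedge b$. One should also keep straight that $[x) = \{y \mid (\exists\, n)(x^n \leq y)\}$ is needed, not just the simpler form valid for idempotents, since a general $a$ need not be idempotent.
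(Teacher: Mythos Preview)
Your proof is correct. Note, however, that the paper does not supply its own proof of this lemma: it appears in Section~\ref{preliminaries} among the ``known notions and results'' recalled without argument, so there is nothing to compare against. Your argument is the standard one and works cleanly; the only cosmetic issue is the detour in the backward part of the first implication where you briefly consider $a^n \wedge \neg\, a^n$ before switching to $a^n \odot \neg\, a^n$ --- just delete that false start and go straight to $a^n, \neg\, a^n \in M \Rightarrow a^n \odot \neg\, a^n = 0 \in M$.
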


\begin{remark} If $A$ is a BL--algebra, then every prime filter of $A$ is included in a unique maximal filter. In particular, this holds when $A$ is an MV--algebra.\label{blgelfand}\end{remark} 

The intersection of all maximal filters of $A$ is called the {\em radical of $A$} and it is denoted by ${\rm Rad}(A)$. By the above, ${\rm Rad}(A)$ is a filter of $A$. $A$ is said to be {\em semisimple} iff ${\rm Rad}(A)=\{1\}$.

If $F$ is a filter of $A$, then the binary relation $\equiv ({\rm mod}\ F)$ on $A$, defined by: for all $x,y\in A$, $x\equiv y({\rm mod}\ F)$ iff $x\leftrightarrow y\in F$, is a congruence of the residuated lattice $A$, called {\em the congruence modulo $F$}. For every $x\in A$, we shall denote by $x/F$ the congruence class of $x$ with respect to $\equiv ({\rm mod}\ F)$. Residuated lattices form an equational class, hence the quotient set $A/F=\{x/F\ |\ x\in A\}$ with respect to the congruence $\equiv ({\rm mod}\ F)$ becomes a residuated lattice with the operations defined canonically. Clearly, for all $x\in F$, $x/F=1/F=F$.

It is easy to prove that, for any filter $F\subseteq {\rm Rad}(A)$, ${\rm Rad}(A/F)={\rm Rad}(A)/F$. Consequently, $A/{\rm Rad}(A)$ is semisimple.

The residuated lattice $A$ is said to be {\em local} iff it has exactly one maximal filter. $A$ is said to be {\em semilocal} iff it has only a finite number of maximal filters. $A$ is said to be {\em maximal} iff, for any non--empty index set $I$, any $(a_i)_{i\in I}\subseteq A$ and any $(F_i)_{i\in I}\subseteq {\rm Filt}(A)$ such that $\displaystyle \bigcap _{j\in J}a_j/F_j$ is non--empty for every finite subset $J$ of $I$, it follows that $\displaystyle \bigcap _{i\in I}a_i/F_i$ is non--empty. Maximal residuated lattices are semilocal. 

All the considerations above related to filters hold if we replace the arbitrary residuated lattice $A$ by the arbitrary bounded distributive lattice $L$, with only these modifications: replacing $\odot $ by $\wedge $ and defining the congruence modulo a filter $F$ of $L$ by: for any $x,y\in L$, $x\equiv y\, ({\rm mod}\ F)$ iff there exists $a\in F$ such that $x\wedge a=y\wedge a$. We shall be using the same notations as the ones above concerning filters of a residuated lattice for an arbitrary bounded distributive lattice instead of the arbitrary residuated lattice, and we believe that there is no danger of this producing a confusion.

In the case of residuated lattices, the correspondence above is a bijection between the set of filters and the set of congruences; actually, it is even a bounded lattice isomorphism between the bounded lattice of filters and that of congruences; more on this subject in the next section of the paper.

The fact that residuated lattices form an equational class also ensures us that arbitrary direct products of residuated lattices become residuated lattices with the operations defined canonically. 

As shown in \cite[Proposition $2.16$]{eu3}, for any $e\in {\cal B}(A)$, $([e),\vee ,\wedge ,\odot ,\rightarrow _e,e,1)$ is a residuated lattice, where, for all $a,b\in [e)$, $a\rightarrow _eb=e\vee (a\rightarrow b)$, and all other binary operations are induced by those of $A$.

\section{Lifting Properties in Universal Algebras}
\label{lpsunivalg}

Idempotent Lifting Property (ILP) is studied in ring theory (\cite{andcam}, \cite{camyu}, \cite{hannic}, \cite{nic}), while Boolean Lifting Property (BLP) appears in MV--algebras (\cite{figele}), BL--algebras (\cite{leo}) and residuated lattices (\cite{eu3}, \cite{ggcm}). In this section we define a general notion of lifting property in the context of universal algebras; this notion of lifting property embodies ILP, BLP and many other important kinds of lifting properties. We then prove some results related to lifting properties in universal algebras and some on lifting properties in particular classes of algebras.

Let $\tau $ be a universal algebras signature and ${\bf L}_{\tau }$ the first order language associated to $\tau $.

Let $\varphi (v)$ be a formula of ${\bf L}_{\tau }$ having at most $v$ as free variable. If ${\cal A}$ is a $\tau $--algebra, then we denote by $A$ its underlying set, by ${\rm Con}({\cal A})$ the set of the congruences of ${\cal A}$ and by ${\cal A}(\varphi )=\{a\in A\ |\ {\cal A}\vDash \varphi (a)\}$. Also, we shall denote by $\Delta _A=\{(a,a)\ |\ a\in A\}\in {\rm Con}({\cal A})$ the binary relation given by the equality on $A$.

For every two congruences $\theta $ and $\omega $ of a $\tau $--algebra ${\cal A}$, we define $\theta \vee \omega $ to be the least congruence of ${\cal A}$ (with respect to $\subseteq $) which includes $\theta \cup \omega $. According to \cite[Theorem $5.3$, p. $40$]{bur}, $({\rm Con}({\cal A}),\vee ,\cap ,\Delta _A,A^2)$ is a complete bounded lattice. If this lattice is distributive, then the $\tau $--algebra ${\cal A}$ is said to be {\em congruence--distributive}.

\begin{definition}
Let ${\cal A}$ be a $\tau $--algebra and $\Omega \subseteq {\rm Con}({\cal A})$.

We say that ${\cal A}$ has the {\em $(\varphi ,\Omega )$--Lifting Property} (in brief, {\em $(\varphi ,\Omega )$--LP}) iff, for every $\theta \in \Omega $ and every $a\in A$, if $a/\theta \in ({\cal A}/\theta )(\varphi )$, then there exists $e\in {\cal A}(\varphi )$ such that $a/\theta =e/\theta $.

The $(\varphi ,{\rm Con}({\cal A}))$--LP will be called, simply, the {\em $\varphi $--Lifting Property} (in brief, {\em $\varphi $--LP}).

If $\theta \in {\rm Con}({\cal A})$, then the $(\varphi ,\{\theta \})$--LP will be denoted, simply, by $(\varphi ,\theta )$--LP. Also, we shall say that $\theta $ has the $\varphi $--LP iff ${\cal A}$ has the $(\varphi ,\theta )$--LP.\end{definition}

\begin{remark}
Clearly, the inclusion ${\cal A}(\varphi )/\theta \subseteq ({\cal A}/\theta )(\varphi )$ holds for every formula $\varphi $ of ${\bf L}_{\tau }$ with at most one free variable, every $\tau $--algebra ${\cal A}$ and every $\theta \in {\rm Con}({\cal A})$.

The definition above says that ${\cal A}$ has the $(\varphi ,\theta )$--LP iff $({\cal A}/\theta )(\varphi )\subseteq {\cal A}(\varphi )/\theta $, which is equivalent to ${\cal A}(\varphi )/\theta =({\cal A}/\theta )(\varphi )$. In other words:

\begin{itemize}
\item $\theta $ has the $\varphi $--LP iff $({\cal A}/\theta )(\varphi )\subseteq {\cal A}(\varphi )/\theta $ iff ${\cal A}(\varphi )/\theta =({\cal A}/\theta )(\varphi )$.
\end{itemize}

Clearly, for every $\Omega \subseteq {\rm Con}({\cal A})$, the $(\varphi ,\Omega )$--LP is equivalent to $(\varphi ,\theta )$--LP for all $\theta \in \Omega $. Consequently, the $\varphi $--LP is equivalent to $(\varphi ,\theta )$--LP for all $\theta \in {\rm Con}({\cal A})$. In other words:

\begin{itemize}
\item ${\cal A}$ has the $(\varphi ,\Omega )$--LP iff every $\theta \in \Omega $ has the $\varphi $--LP;
\item ${\cal A}$ has the $\varphi $--LP iff every $\theta \in {\rm Con}({\cal A})$ has the $\varphi $--LP.
\end{itemize}

Trivially, ${\cal A}$ always has the $(\varphi ,\Delta _A)$--LP and the $(\varphi ,A^2)$--LP.\label{exultriv}\end{remark}

\begin{example}
Let ${\bf L}_{\rm ring}$ be the language of unitary commutative rings. If ${\cal R}$ is an unitary commutative ring, then the lattice ${\rm Con}({\cal R})$ is isomorphic to the lattice ${\rm Id}({\cal R})$ of the ideals of ${\cal R}$.

Let us consider the formula $\varphi (v)$ that expresses the fact that $v$ is an idempotent element of $R$, that is:$$\varphi (v):\ v^2=v.$$Then $\varphi $--LP is the idempotent lifting property in \cite{nic}.

According to \cite{nic}, ${\cal R}$ satisfies the $\varphi $--LP iff ${\cal R}$ is an exchange ring iff ${\cal R}$ is a clean ring.\label{ringex}\end{example}

\begin{example}
Let ${\bf L}_{\rm res}$ be the language of residuated lattices and $\varphi $ be a formula of ${\bf L}_{\rm res}$ having at most one free variable. If ${\cal A}$ is a residuated lattice, then ${\rm Con}({\cal A})$ is isomorphic to the lattice ${\rm Filt}({\cal A})$ of the filters of ${\cal A}$. For every filter $F$ of ${\cal A}$, we shall say that {\em $F$ has the $\varphi $--LP} iff the congruence of ${\cal A}$ associated to the filter $F$ through this bounded lattice isomorphism has the $\varphi $--LP. Thus the residuated lattice ${\cal A}$ has the $\varphi $--LP iff each of its filters has the $\varphi $--LP. Also, recall from Section \ref{preliminaries} that the bounded lattice ${\rm Filt}({\cal A})$ is distributive, which shows that residuated lattices are congruence--distributive algebras (see also \cite{gal}).

The last statement in Remark \ref{exultriv} shows that the trivial filter and the improper filter of ${\cal A}$ always have the $\varphi $--LP.

Now let $\varphi (v)$ be the formula which expresses the first order property that $v$ is a Boolean element of $A$, that is:$$\varphi (v):\ v\vee\neg \, v=1.$$Then ${\cal A}(\varphi )={\cal B}({\cal A})$ and $\varphi $--LP is the Boolean Lifting Property (BLP) in \cite{ggcm}. A filter $F$ of ${\cal A}$ has the BLP iff ${\cal B}({\cal A}/F)={\cal B}({\cal A})/F$. Clearly, if ${\cal B}({\cal A})=A$ (that is if the underlying bounded lattice of ${\cal A}$ is a Boolean algebra), then ${\cal A}$ has BLP.\label{resboolex}\end{example}

\begin{example}
Let ${\cal A}$ be a residuated lattice.

The first order property ``$v$ is an idempotent element in ${\cal A}$`` is formalized by the formula:$$\varphi (v):\ v^2=v.$$Then ${\cal A}(\varphi )={\cal I}({\cal A})$ and $\varphi $--LP is called the Idempotent Lifting Property (ILP). A filter $F$ of ${\cal A}$ has the ILP iff ${\cal I}({\cal A}/F)={\cal I}({\cal A})/F$. Clearly, if ${\cal I}({\cal A})=A$ (which, as mentioned in Section \ref{preliminaries}, is equivalent to the fact that $A$ is a ${\rm G\ddot{o}del}$ algebra), then ${\cal A}$ has ILP.\label{residex}\end{example}

\begin{example}

Let ${\cal A}$ be a residuated lattice.

The first order property ``$v$ is a regular element of ${\cal A}$`` is formalized by the formula:$$\varphi (v):\ v=\neg \, \neg \, v.$$Then ${\cal A}(\varphi )={\rm Reg}({\cal A})$ and $\varphi $--LP is called the Regular Lifting Property (RLP). A filter $F$ of ${\cal A}$ has the RLP iff ${\rm Reg}({\cal A}/F)={\rm Reg}({\cal A})/F$.

RLP is trivial, in the sense that every residuated lattice has RLP. Indeed, for every residuated lattice ${\cal A}$, we have ${\rm Reg}({\cal A})=\{\neg \, a\ |\ a\in A\}$, thus, for every filter $F$ of ${\cal A}$, ${\rm Reg}({\cal A}/F)=\{\neg \, x\ |\ x\in A/F\}=\{\neg \, (a/F)\ |\ a\in A\}=\{\neg \, a/F\ |\ a\in A\}={\rm Reg}({\cal A})/F$.\label{resregex}\end{example}

\begin{example}
Let ${\bf L}_{\rm lat}$ be the first order language of bounded distributive lattices. If ${\cal L}$ is a bounded distributive lattice, then the lattice ${\rm Id}({\cal L})$ of the ideals of ${\cal L}$ and the lattice ${\rm Filt}({\cal L})$ of the filters of ${\cal L}$ are isomorphic to bounded sublattices of ${\rm Con}({\cal L})$. We may consider ${\rm Id}({\cal L})$ and ${\rm Filt}({\cal L})$ to be bounded sublattices of ${\rm Con}({\cal L})$.

The first order property ``$v$ is a Boolean element of ${\cal L}$`` is formalized in ${\bf L}_{\rm lat}$ by the formula:$$\varphi (v):\ \exists \, w\, (v\vee w=1\ \& \ v\wedge w=0).$$Then ${\cal L}(\varphi )={\cal B}({\cal L})$ and, by considerring the bounded sublattices ${\rm Id}({\cal L})$ and ${\rm Filt}({\cal L})$ of ${\rm Con}({\cal L})$, we obtain the properties $(\varphi ,{\rm Id}({\cal L}))$--LP and $(\varphi ,{\rm Filt}({\cal L}))$--LP, respectively.

Here the $(\varphi ,{\rm Con}({\cal L}))$--LP, the $(\varphi ,{\rm Id}({\cal L}))$--LP and the $(\varphi ,{\rm Filt}({\cal L}))$--LP do not necessarily coincide.

Throughout the rest of this paper, unless mentioned otherwise, we shall say that a bounded distributive lattice ${\cal L}$ has the {\em Boolean Lifting Property} (in brief, {\em BLP}) iff ${\cal L}$ satisfies the $(\varphi ,{\rm Filt}({\cal L}))$--LP, that is each filter $F$ of ${\cal L}$ satisfies the {\em BLP}, that is ${\cal B}({\cal L}/F)={\cal B}({\cal L})/F$.

Clearly, if ${\cal B}({\cal L})=L$, that is if ${\cal L}$ is a Boolean algebra, then ${\cal L}$ has BLP.\label{blatex}\end{example}

\begin{remark}
The bounded lattice isomorphism between ${\rm Con}({\cal A})$ and ${\rm Id}({\cal A})$ in any residuated lattice ${\cal A}$ ensures us that, for any formula $\varphi $ of ${\bf L}_{\rm res}$ having at most one free variable, the $\varphi $--LP for ${\cal A}$ (that is the $(\varphi ,{\rm Con}({\cal A}))$--LP) coincides with the $(\varphi ,{\rm Filt}({\cal A}))$--LP.\end{remark}

Let $\varphi $ be an existential formula in ${\bf L}_{\tau }$, that is a formula of the form $\exists \, w_1\, \exists \, w_2\, \ldots \exists \, w_n\, \psi (v,w_1,w_2,\ldots ,w_n)$ is an atomic formula in ${\bf L}_{\tau }$.

\begin{proposition}
Let ${\cal A}$ be a $\tau $--algebra. Then: ${\cal A}$ has the $\varphi $--LP iff ${\cal A}/\theta $ has the $\varphi $--LP for every $\theta \in {\rm Con}({\cal A})$.\label{propcaturi}\end{proposition}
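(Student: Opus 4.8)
The plan is to treat the two implications separately; the reverse one is immediate and the forward one carries all the content.

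For ``$\Leftarrow$'': since ${\cal A}$ is canonically isomorphic to ${\cal A}/\Delta _A$ and the $\varphi $--LP is plainly invariant under isomorphisms of $\tau $--algebras, it suffices to instantiate the hypothesis at $\theta =\Delta _A$. So the only real work is in ``$\Rightarrow$''.

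For ``$\Rightarrow$'', I would assume ${\cal A}$ has the $\varphi $--LP, fix $\theta \in {\rm Con}({\cal A})$, and show that every congruence of ${\cal A}/\theta $ has the $\varphi $--LP with respect to ${\cal A}/\theta $. The main tool is the correspondence (third isomorphism) theorem for universal algebras: each congruence $\overline{\omega }$ of ${\cal A}/\theta $ is induced by a unique congruence $\omega $ of ${\cal A}$ with $\theta \subseteq \omega $, namely $(a/\theta )\,\overline{\omega }\,(b/\theta )$ iff $(a,b)\in \omega $, and there is a canonical isomorphism $({\cal A}/\theta )/\overline{\omega }\cong {\cal A}/\omega $ sending $(a/\theta )/\overline{\omega }$ to $a/\omega $. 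Given $a\in A$ with $(a/\theta )/\overline{\omega }\in (({\cal A}/\theta )/\overline{\omega })(\varphi )$, transporting this membership along that isomorphism yields $a/\omega \in ({\cal A}/\omega )(\varphi )$; applying the $\varphi $--LP of ${\cal A}$ to the congruence $\omega $ then produces $e\in {\cal A}(\varphi )$ with $a/\omega =e/\omega $. I would claim $e/\theta $ is the required witness: $(a/\theta )/\overline{\omega }=(e/\theta )/\overline{\omega }$ follows by transporting $a/\omega =e/\omega $ back along the same isomorphism, and $e/\theta \in ({\cal A}/\theta )(\varphi )$ because the canonical surjection ${\cal A}\to {\cal A}/\theta $ is a $\tau $--morphism, morphisms commute with all term operations and hence preserve atomic formulas, so they preserve the \emph{existential} formula $\varphi $ (a witnessing tuple for $\varphi (e)$ in ${\cal A}$ maps to a witnessing tuple for $\varphi (e/\theta )$ in ${\cal A}/\theta $); thus ${\cal A}\vDash \varphi (e)$ gives ${\cal A}/\theta \vDash \varphi (e/\theta )$. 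Since $\overline{\omega }$ was arbitrary, this shows ${\cal A}/\theta $ has the $\varphi $--LP.

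The hard part — in fact the only step that is more than bookkeeping — is precisely this last preservation fact, and it is exactly where the standing assumption that $\varphi $ is existential (an atomic matrix under a block of existential quantifiers) gets used: for a general first order $\varphi $, surjective morphisms need not preserve $\varphi $, so the inclusion ${\cal A}(\varphi )/\theta \subseteq ({\cal A}/\theta )(\varphi )$ underpinning the argument could fail. Everything else is routine manipulation of the isomorphism theorem together with the reformulations of $(\varphi ,\theta )$--LP and $\varphi $--LP recalled in the Remark following the definition of lifting properties.
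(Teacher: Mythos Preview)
Your proof is correct and follows essentially the same route as the paper's: the converse via $\theta =\Delta _A$, and the direct implication via the correspondence between ${\rm Con}({\cal A}/\theta )$ and $\{\omega \in {\rm Con}({\cal A})\mid \theta \subseteq \omega \}$ together with the isomorphism $({\cal A}/\theta )/\overline{\omega }\cong {\cal A}/\omega $ (what the paper calls the Second Isomorphism Theorem and you call the third). You have simply unpacked in full what the paper compresses into one sentence, and your explicit remark on where the existential shape of $\varphi $ is actually needed is a useful addition.
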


\begin{proof} For the converse implication, just take the congruence $\Delta _A$ of ${\cal A}$. The direct implication follows from the form of ${\rm Con}({\cal A}/\theta )$ and the Second Isomorphism Theorem.\end{proof}

\begin{remark}
\begin{itemize}
\item By applying Proposition \ref{propcaturi} to the language and algebra in Example \ref{ringex}, we get \cite[Proposition $1.4$]{nic}.
\item By applying Proposition \ref{propcaturi} to the language and algebra in Example \ref{resboolex}, we get \cite[Corollary $4.16$]{ggcm}.
\end{itemize}\label{caturiaplic}\end{remark}

Let $n\in \N ^*$, ${\cal A}_1,{\cal A}_2,\ldots ,{\cal A}_n$ be congruence--distributive $\tau $--algebras and $\displaystyle {\cal A}=\prod _{i=1}^n{\cal A}_i$. Let us consider the function $\displaystyle u:\prod _{i=1}^n{\rm Con}({\cal A}_i)\rightarrow {\rm Con}({\cal A})$, defined by:$$u(\theta _1,\theta _2,\ldots ,\theta _n)=\{(x,y)\in A^2\ |\ \forall \, i\in \overline{1,n}\, (x_i,y_i)\in \theta _i\}=\theta _1\times \theta _2\ldots \times \theta _n$$for all $\theta _1\in {\rm Con}({\cal A}_1)$, $\theta _2\in {\rm Con}({\cal A}_2)$, $\ldots $, $\theta _n\in {\rm Con}({\cal A}_n)$, where, for each $a\in A$ and every $i\in \overline{1,n}$, we have denoted by $a_i\in A_i$ the $i$--th component of $a$. Also, let us denote, for every $i\in \overline{1,n}$, by $\pi _i:{\cal A}\rightarrow {\cal A}_i$ the canonical projection (for all $a\in A$, $\pi _i(a)=a_i$).

\begin{lemma}{\rm \cite{bj}}
$u$ is a lattice isomorphism.\label{ueizom}\end{lemma}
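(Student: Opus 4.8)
The plan is to show that $u$ is a well-defined lattice homomorphism which is bijective, with the inverse sending a congruence $\theta$ of ${\cal A}$ to the tuple $(\pi_1(\theta),\ldots,\pi_n(\theta))$, where $\pi_i(\theta)=\{(\pi_i(x),\pi_i(y))\mid (x,y)\in\theta\}$. First I would check that $u(\theta_1,\ldots,\theta_n)=\theta_1\times\cdots\times\theta_n$ really is a congruence of ${\cal A}$: it is plainly an equivalence relation on $A$, and compatibility with each basic operation of $\tau$ follows componentwise from the fact that each $\theta_i$ is compatible with the corresponding operation on ${\cal A}_i$ and operations on the product are computed componentwise. Next, $u$ is order-preserving and order-reflecting in the obvious way — $\theta_1\times\cdots\times\theta_n\subseteq\omega_1\times\cdots\times\omega_n$ iff $\theta_i\subseteq\omega_i$ for all $i$ (the "only if" using that, given $(a,b)\in\theta_i$, one can build a pair in $A^2$ with $i$-th components $a,b$ and all other components equal) — so once $u$ is shown bijective it is automatically a lattice isomorphism, since a bijection between posets that preserves and reflects order is an isomorphism of lattices when both are lattices.

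The substantive step is surjectivity: given $\theta\in{\rm Con}({\cal A})$, I must show $\theta=\pi_1(\theta)\times\cdots\times\pi_n(\theta)$. The inclusion $\theta\subseteq\pi_1(\theta)\times\cdots\times\pi_n(\theta)$ is immediate from the definition of $\pi_i(\theta)$. For the reverse inclusion, suppose $(x,y)\in A^2$ with $(x_i,y_i)\in\pi_i(\theta)$ for each $i$; so for each $i$ there are $p^{(i)},q^{(i)}\in A$ with $(p^{(i)},q^{(i)})\in\theta$ and $p^{(i)}_i=x_i$, $q^{(i)}_i=y_i$. The idea is to "splice" these $n$ pairs together using the congruence-distributivity of ${\cal A}$ — this is exactly why that hypothesis was imposed, and it is also why $\Delta_A$ and $A^2$ appear as the bottom and top of each ${\rm Con}({\cal A}_i)$: for each $i$ let $\beta_i$ be the congruence of ${\cal A}$ which is $A_i^2$ in the $i$-th coordinate and $\Delta$ elsewhere (i.e. $\beta_i=u(\Delta,\ldots,A_i^2,\ldots,\Delta)$, the kernel of the projection onto the product of the other factors). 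Then $\bigcap_i\beta_i=\Delta_A$, and from $(p^{(i)},q^{(i)})\in\theta$ together with $(x,p^{(i)})\in\beta_j$ and $(q^{(i)},y)\in\beta_j$ for all $j\neq i$, one gets $(x,y)\in (\theta\vee\beta_i')$-type relations that, combined across $i$ via distributivity of the congruence lattice, force $(x,y)\in\theta$. Concretely: $(x,y)\in\bigcap_{i=1}^n\big(\theta\vee\bigvee_{j\neq i}\beta_j\big)$, and by distributivity this intersection equals $\theta\vee\bigcap_{i=1}^n\bigvee_{j\neq i}\beta_j=\theta\vee\Delta_A=\theta$.

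The main obstacle is this splicing argument — verifying that $(x,y)$ lies in each $\theta\vee\bigvee_{j\neq i}\beta_j$ and that $\bigcap_{i}\bigvee_{j\neq i}\beta_j=\Delta_A$. The second identity is a routine coordinatewise check (a pair is $\Delta_A$-related iff all coordinates agree, and for each $i$ there is some factor excluded from $\bigvee_{j\neq i}\beta_j$, namely the $i$-th, forcing agreement there). For the first: $x$ and $p^{(i)}$ agree in coordinate $i$ and are $\beta_j$-related for $j\neq i$, hence $(x,p^{(i)})\in\bigvee_{j\neq i}\beta_j$; similarly $(q^{(i)},y)\in\bigvee_{j\neq i}\beta_j$; chaining with $(p^{(i)},q^{(i)})\in\theta$ gives $(x,y)\in\theta\vee\bigvee_{j\neq i}\beta_j$ (using transitivity inside the join, which is legitimate since joins of congruences are congruences, hence transitive). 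Once surjectivity is in hand, injectivity follows from $\pi_i(u(\theta_1,\ldots,\theta_n))=\theta_i$, and combined with the order-preservation and order-reflection noted above, $u$ is a lattice isomorphism, completing the proof. Of course, since the statement is cited as \cite{bj}, one may alternatively simply refer to that source; I sketch the argument here for completeness.
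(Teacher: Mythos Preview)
The paper does not give its own proof of this lemma; it simply cites \cite{bj}. Your sketch is the standard argument for this classical fact and is essentially correct.

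One point deserves a word of caution. In the key splicing step you pass from $(x,y)\in\bigcap_{i=1}^n\bigl(\theta\vee\bigvee_{j\neq i}\beta_j\bigr)$ to $(x,y)\in\theta\vee\bigcap_{i=1}^n\bigvee_{j\neq i}\beta_j$ ``by distributivity of the congruence lattice''. That is distributivity of ${\rm Con}({\cal A})$, i.e.\ congruence--distributivity of the \emph{product} ${\cal A}$, whereas the paper's stated hypothesis is only that each factor ${\cal A}_i$ is congruence--distributive. A finite product of individually congruence--distributive algebras is not automatically congruence--distributive, so as written your argument has a small gap relative to the exact hypotheses in the paper. In the paper's actual applications (residuated lattices, bounded distributive lattices, commutative rings) all algebras live in a congruence--distributive variety, so ${\cal A}$ is congruence--distributive and your argument goes through unchanged; and indeed the result as phrased in \cite{bj} is for congruence--distributive varieties. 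It would suffice either to add one line noting that in the intended setting ${\cal A}$ inherits congruence--distributivity from the variety, or to strengthen the hypothesis to ``${\cal A}$ congruence--distributive'', which is what the argument actually uses. With that caveat, everything else in your outline (well--definedness of $u$, order preservation and reflection, the identification of $\bigvee_{j\neq i}\beta_j$ with the kernel of $\pi_i$, and the computation $\bigcap_i\bigvee_{j\neq i}\beta_j=\Delta_A$) is correct.
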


Now let us define $\displaystyle \lambda :{\cal A}/\theta \rightarrow \prod _{i=1}^n{\cal A}_i/\theta _i$, for all $a\in A$, $\lambda (a/\theta )=(a_1/\theta _1,\ldots ,a_n/\theta _n)$.

\begin{remark}
Clearly, $\lambda $ is an isomorphism of $\tau $--algebras.\label{lambdaeizom}\end{remark}

\begin{proposition}
\begin{enumerate}

\item\label{propproduse1} $\displaystyle {\cal A}(\varphi )=\prod _{i=1}^n{\cal A}_i(\varphi )$;
\item\label{propproduse2} ${\cal A}$ has $\varphi $--LP iff ${\cal A}_i$ has $\varphi $--LP, for every $i\in \overline{1,n}$.
\end{enumerate}\label{propproduse}\end{proposition}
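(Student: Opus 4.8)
The plan is to prove the two parts in order, using Lemma \ref{ueizom} and Remark \ref{lambdaeizom} as the main tools. For part (\ref{propproduse1}), since $\varphi$ is of the form $\exists\, w_1\ldots \exists\, w_n\, \psi(v,w_1,\ldots ,w_n)$ with $\psi$ atomic (in fact, even without the existential structure, for any first order formula this would follow from the usual behaviour of satisfaction in direct products \emph{when the formula is preserved in both directions} --- but here I would rather argue directly). I would show both inclusions. For $\supseteq$: if $a\in A$ has $a_i\in {\cal A}_i(\varphi)$ for every $i$, then for each $i$ there are witnesses $b^{(i)}_1,\ldots ,b^{(i)}_n\in A_i$ with ${\cal A}_i\vDash \psi(a_i,b^{(i)}_1,\ldots ,b^{(i)}_n)$; assembling, for each $j$, the element $b_j\in A$ with $j$--th family indexed by $i$, i.e. $(b_j)_i=b^{(i)}_j$, and using that atomic formulas (equations of terms) are evaluated componentwise in a direct product, we get ${\cal A}\vDash \psi(a,b_1,\ldots ,b_n)$, hence $a\in {\cal A}(\varphi)$. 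For $\subseteq$: if $a\in {\cal A}(\varphi)$, pick witnesses $b_1,\ldots ,b_n\in A$; projecting via $\pi_i$ (a $\tau$--morphism, hence preserving the atomic formula $\psi$) gives ${\cal A}_i\vDash \psi(a_i,(b_1)_i,\ldots ,(b_n)_i)$, so $a_i\in {\cal A}_i(\varphi)$ for each $i$. This establishes (\ref{propproduse1}).

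For part (\ref{propproduse2}), I would use Lemma \ref{ueizom}: every congruence $\theta$ of ${\cal A}$ has the form $\theta=u(\theta_1,\ldots ,\theta_n)=\theta_1\times\cdots\times\theta_n$ for unique $\theta_i\in {\rm Con}({\cal A}_i)$. Combining Remark \ref{lambdaeizom} (the isomorphism $\lambda:{\cal A}/\theta\to\prod_i {\cal A}_i/\theta_i$) with part (\ref{propproduse1}) applied to the product $\prod_i {\cal A}_i/\theta_i$, we obtain that under $\lambda$ the set $({\cal A}/\theta)(\varphi)$ corresponds to $\prod_i ({\cal A}_i/\theta_i)(\varphi)$, and likewise ${\cal A}(\varphi)$ corresponds to $\prod_i {\cal A}_i(\varphi)$ under $u$'s underlying set bijection. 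Moreover, tracing through the definitions, the canonical surjection $A\to A/\theta$ corresponds, componentwise, to the surjections $A_i\to A_i/\theta_i$, so that the image ${\cal A}(\varphi)/\theta$ corresponds to $\prod_i \bigl({\cal A}_i(\varphi)/\theta_i\bigr)$. By the reformulation in Remark \ref{exultriv}, $\theta$ has the $\varphi$--LP iff $({\cal A}/\theta)(\varphi)\subseteq {\cal A}(\varphi)/\theta$; under the identifications above this becomes $\prod_i ({\cal A}_i/\theta_i)(\varphi)\subseteq \prod_i \bigl({\cal A}_i(\varphi)/\theta_i\bigr)$, and since a product of sets is contained in another product iff each factor is (all factors being non--empty, as $1/\theta_i$ or the relevant class always lies in $({\cal A}_i/\theta_i)(\varphi)$ whenever that set is non--empty --- and if some factor is empty the inclusion is trivial), this holds iff $\theta_i$ has the $\varphi$--LP in ${\cal A}_i$ for every $i$. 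Since $\theta\mapsto(\theta_1,\ldots ,\theta_n)$ is a bijection ${\rm Con}({\cal A})\to\prod_i {\rm Con}({\cal A}_i)$, quantifying over all $\theta$ is the same as quantifying over all tuples, giving: ${\cal A}$ has $\varphi$--LP iff each ${\cal A}_i$ has $\varphi$--LP.

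The main obstacle I anticipate is the bookkeeping in part (\ref{propproduse2}): one must check carefully that the three identifications (of $({\cal A}/\theta)(\varphi)$, of ${\cal A}(\varphi)$, and of the quotient map $A\to A/\theta$) are mutually compatible, so that the image ${\cal A}(\varphi)/\theta$ really does correspond to the product $\prod_i ({\cal A}_i(\varphi)/\theta_i)$ and not to something larger. This is where part (\ref{propproduse1}) is essential, because it is exactly what guarantees that taking $\varphi$--points commutes with finite products, both before and after quotienting. The reduction of "inclusion of products" to "inclusion in each factor" needs the small caveat about empty factors noted above, but that case makes the $\varphi$--LP for the corresponding $\theta_i$ vacuously hold (there is nothing to lift), so it causes no trouble. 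Everything else is routine transport of structure along the isomorphisms $u$ and $\lambda$.
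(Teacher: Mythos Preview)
Your argument for part (\ref{propproduse1}) matches the paper's exactly: both inclusions are checked using that atomic formulas are evaluated componentwise in a product and preserved by the projections.

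For part (\ref{propproduse2}), your ``$\Leftarrow$'' direction coincides with the paper's: decompose $\theta=\theta_1\times\cdots\times\theta_n$ via Lemma \ref{ueizom}, lift componentwise, and reassemble using part (\ref{propproduse1}) and Remark \ref{lambdaeizom}. For ``$\Rightarrow$'', however, the paper takes a different and cleaner route: it simply invokes Proposition \ref{propcaturi}, since each ${\cal A}_i$ is (isomorphic to) a quotient of ${\cal A}$. Your symmetric per-congruence argument (``$\theta$ has $\varphi$--LP iff each $\theta_i$ does'') is not quite right as stated: from $\prod_i({\cal A}_i/\theta_i)(\varphi)\subseteq\prod_i({\cal A}_i(\varphi)/\theta_i)$ you can only deduce the factorwise inclusions when \emph{all} factors on the left are non-empty, and your parenthetical handling of the empty case is circular (``$1/\theta_i$ lies in $({\cal A}_i/\theta_i)(\varphi)$ whenever that set is non-empty''). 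If $({\cal A}_j/\theta_j)(\varphi)=\emptyset$ for some $j$, the product inclusion is vacuous and tells you nothing about the other factors. This is easy to repair---for the global statement you may choose $\theta_j=A_j^2$ for $j\neq i$, so that ${\cal A}_j/\theta_j$ is trivial and automatically satisfies the existential-atomic $\varphi$---but that repair is precisely the content of Proposition \ref{propcaturi}, which is why the paper just cites it.
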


\begin{proof} (\ref{propproduse1}) For simplicity, we shall assume that $\varphi (v)$ is of the form $\exists \, w\, \psi (v,w)$. For all $a\in A$, we have:

\begin{center}

\begin{tabular}{rll}
$a\in {\cal A}(\varphi )$ & iff & ${\cal A}\vDash \varphi (a)$\\ 
& iff & there exists $b\in A$ such that ${\cal A}\vDash \psi (a,b)$\\ 
& iff, & for all $i\in \overline{1,n}$, there exists $b_i\in A_i$ such that ${\cal A}_i\vDash \psi (a_i,b_i)$\\ 
& iff, & for all $i\in \overline{1,n}$, ${\cal A}_i\vDash \varphi (a_i)$\\ 
& iff, & for all $i\in \overline{1,n}$, $a_i\in {\cal A}_i(\varphi )$\\
& iff & $\displaystyle a\in \prod _{i=1}^n{\cal A}_i(\varphi )$
\end{tabular}
\end{center}

\noindent (\ref{propproduse2}) ``$\Rightarrow :$`` By Proposition \ref{propcaturi}.

\noindent ``$\Leftarrow :$`` Let $\theta \in {\rm Con}({\cal A})$ and $a\in A$ such that $a/\theta \in ({\cal A}/\theta)(\varphi )$. By Lemma \ref{ueizom}, $\theta =\theta _1\times \ldots \times \theta _n$, with $\theta _i\in {\rm Con}({\cal A}_i)$ for all $i\in \overline{1,n}$. Remark \ref{lambdaeizom} shows that $a_i\in ({\cal A}_i/\theta _i)(\varphi )$ for all $i\in \overline{1,n}$. Since each of the $\tau $--algebras ${\cal A}_1,\ldots ,{\cal A}_n$ has $\varphi $--LP, thus, for all $i\in \overline{1,n}$, there exists an $e_i\in {\cal A}_i(\varphi )$ such that $a_i/\theta _i=e_i/\theta _i$. Let $e=(e_1,\ldots ,e_n)$. Then, according to (\ref{propproduse1}), $e\in {\cal A}(\varphi )$. Notice that $\lambda (a/\theta )=\lambda (e/\theta )$, so that $a/\theta =e/\theta $ by Remark \ref{lambdaeizom}, hence $a/\theta \in {\cal A}(\varphi )/\theta $, thus $({\cal A}/\theta)(\varphi )={\cal A}(\varphi )/\theta $, which means that ${\cal A}$ has $\varphi $--LP.\end{proof}

\begin{remark}
Proposition \ref{propproduse}, applied to:

\begin{enumerate}
\item Example \ref{ringex} leads to the result in \cite{andcam} and \cite{hannic} which says that a finite direct product of commutative rings is clean iff each of those rings is clean.
\item Example \ref{resboolex} is exactly \cite[Proposition $5.4$]{ggcm}.
\item From Example \ref{blatex}, it is straightforward that, with $\varphi $ equalling the formula in that example, the following hold for any non--empty family $({\cal L}_i)_{i\in I}$ of bounded distributive lattices, with ${\cal L}$ denoting the direct product of this family: $\displaystyle {\cal L}=\prod _{i\in I}{\cal L}_i$:

\begin{itemize}
\item ${\cal L}$ has the $(\varphi ,{\rm Con}({\cal L}))$--LP iff, for all $i\in I$, ${\cal L}_i$ has the $(\varphi ,{\rm Con}({\cal L}_i))$--LP;
\item ${\cal L}$ has the $(\varphi ,{\rm Id}({\cal L}))$--LP iff, for all $i\in I$, ${\cal L}_i$ has the $(\varphi ,{\rm Id}({\cal L}_i))$--LP;
\item ${\cal L}$ has the $(\varphi ,{\rm Filt}({\cal L}))$--LP iff, for all $i\in I$, ${\cal L}_i$ has the $(\varphi ,{\rm Filt}({\cal L}_i))$--LP; that is: ${\cal L}$ has the BLP iff, for all $i\in I$, ${\cal L}_i$ has the BLP.\end{itemize}
\end{enumerate}\end{remark}

We recall (\cite[p. $52$]{bur}) that a congruence $\theta $ of a $\tau $--algebra ${\cal A}$ is called a {\em factor congruence} iff there exists a congruence $\theta ^*$ of ${\cal A}$ such that $\theta \cap \theta ^*=\Delta _A$ and $\theta \vee \theta ^*=A^2$. In this case, $(\theta ,\theta ^*)$ is called a {\em pair of factor congruences.}

\begin{corollary}
Let $\varphi $ be a formula of ${\bf L}_{\tau }$ with at most one free variable, ${\cal A}$ be a congruence--distributive $\tau $--algebra and $(\theta ,\theta ^*)$ be a pair of factor congruences. Then: ${\cal A}$ has $\varphi $--LP iff both ${\cal A}/{\theta }$ and ${\cal A}/{\theta ^*}$ have $\varphi $--LP.\end{corollary}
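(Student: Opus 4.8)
The plan is to derive the statement from Proposition \ref{propproduse} (applied with $n=2$), via the classical correspondence between pairs of factor congruences and binary direct product decompositions. Given the pair of factor congruences $(\theta ,\theta ^*)$ on ${\cal A}$, I would first exhibit an isomorphism of $\tau $--algebras $\Phi :{\cal A}\rightarrow {\cal A}/\theta \times {\cal A}/\theta ^*$, and then apply Proposition \ref{propproduse}, (\ref{propproduse2}), with ${\cal A}_1={\cal A}/\theta $ and ${\cal A}_2={\cal A}/\theta ^*$.

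For the first step, put $\Phi (a)=(a/\theta ,a/\theta ^*)$ for every $a\in A$. This is a morphism of $\tau $--algebras, being the product of the two canonical surjections ${\cal A}\rightarrow {\cal A}/\theta $ and ${\cal A}\rightarrow {\cal A}/\theta ^*$; it is injective because $\Phi (a)=\Phi (b)$ amounts to $(a,b)\in \theta \cap \theta ^*=\Delta _A$, that is to $a=b$; and it is surjective by the standard theory of factor congruences (\cite[p. $52$]{bur}), which is precisely where the strength of $(\theta ,\theta ^*)$ being a pair of factor congruences is used --- one needs, for arbitrary $a,b\in A$, some $c\in A$ with $c\equiv a\, ({\rm mod}\ \theta )$ and $c\equiv b\, ({\rm mod}\ \theta ^*)$. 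Next I would note that the two quotients are again congruence--distributive: ${\rm Con}({\cal A}/\theta )$ is isomorphic to the interval $[\theta ,A^2]$ of ${\rm Con}({\cal A})$, hence is a sublattice of a distributive lattice, hence is distributive, and likewise for ${\cal A}/\theta ^*$; so Proposition \ref{propproduse} does apply to the product ${\cal A}/\theta \times {\cal A}/\theta ^*$.

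It then remains only to observe that having the $\varphi $--LP is preserved by isomorphisms of $\tau $--algebras --- since ${\cal A}(\varphi )$, ${\rm Con}({\cal A})$ and all quotients are carried along by $\Phi $ --- so that ${\cal A}$ has the $\varphi $--LP iff ${\cal A}/\theta \times {\cal A}/\theta ^*$ does; and by Proposition \ref{propproduse}, (\ref{propproduse2}), the latter is equivalent to both ${\cal A}/\theta $ and ${\cal A}/\theta ^*$ having the $\varphi $--LP, which is the assertion. I do not expect a real obstacle: the argument is essentially the translation ``factor congruences $\leftrightarrow $ direct factors''. The only points deserving attention are the surjectivity of $\Phi $, which genuinely uses that $\theta $ and $\theta ^*$ form a pair of factor congruences (and not merely that they are complements in the congruence lattice), and the remark that congruence--distributivity descends to the two quotients, so that Proposition \ref{propproduse} is legitimately available; both are routine.
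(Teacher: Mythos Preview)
Your proof is correct and follows essentially the same approach as the paper: the paper simply cites \cite[Theorem $7.5$, p.~$52$]{bur} for the isomorphism ${\cal A}\cong {\cal A}/\theta \times {\cal A}/\theta ^*$ and then invokes Proposition \ref{propproduse}, (\ref{propproduse2}). Your version is just a more explicit unwinding of that same argument, including the useful check that congruence--distributivity passes to the quotients so that Proposition \ref{propproduse} applies.
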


\begin{proof} By \cite[Theorem $7.5$, p. $52$]{bur}, ${\cal A}$ is isomorphic to ${\cal A}/{\theta }\times {\cal A}/{\theta ^*}$. Now apply Proposition \ref{propproduse}, (\ref{propproduse2}).\end{proof}

Throughout the rest of this section, let ${\cal A}$ be a residuated lattice (with underlying set $A$) and $\varphi (v)$ be an atomic formula of ${\bf L}_{\rm res}$, i. e. a formula of the form $t_1(v)\approx t_2(v)$, where $t_1(v)$ and $t_2(v)$ are terms with the variable $v$. Then:$${\cal A}(\varphi )=\{a\in A\ |\ t_1(a)=t_2(a)\}=\{a\in A\ |\ d(t_1(a),t_2(a))=1\}.$$

\begin{lemma}
For all $a\in A$, $a/[d(t_1^{\cal A}(a),t_2^{\cal A}(a)))\in {\cal A}/[d(t_1^{\cal A}(a),t_2^{\cal A}(a)))(\varphi )$.\label{l3.11}\end{lemma}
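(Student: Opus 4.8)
The plan is to unwind the definitions and observe that the filter in the statement was chosen precisely so that $t_1^{\cal A}(a)$ and $t_2^{\cal A}(a)$ become congruent modulo it. Fix $a\in A$ and write $b=d(t_1^{\cal A}(a),t_2^{\cal A}(a))=t_1^{\cal A}(a)\leftrightarrow t_2^{\cal A}(a)$ and $F=[b)$. Since $\varphi(v)$ is the atomic formula $t_1(v)\approx t_2(v)$, membership of $a/F$ in $({\cal A}/F)(\varphi)$ amounts to the equality $t_1^{{\cal A}/F}(a/F)=t_2^{{\cal A}/F}(a/F)$ in the quotient residuated lattice ${\cal A}/F$.

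First I would use that the canonical surjection $p\colon {\cal A}\to {\cal A}/F$, $p(x)=x/F$, is a residuated lattice morphism, and hence commutes with term operations: $t_i^{{\cal A}/F}(a/F)=t_i^{{\cal A}/F}(p(a))=p(t_i^{\cal A}(a))=t_i^{\cal A}(a)/F$ for $i\in\{1,2\}$. Thus the desired equality in ${\cal A}/F$ reduces to $t_1^{\cal A}(a)/F=t_2^{\cal A}(a)/F$, i.e.\ to $t_1^{\cal A}(a)\equiv t_2^{\cal A}(a)\ (\mathrm{mod}\ F)$, which by the definition of the congruence modulo a filter means exactly $d(t_1^{\cal A}(a),t_2^{\cal A}(a))\in F$, that is $b\in F$.

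Finally I would note that this last condition is immediate: $F=[b)$ is the principal filter generated by $b$, so $b\in F$ trivially (indeed $b=b^1\leq b$, using the description $[b)=\{c\in A\mid(\exists\,n\in\N^*)\,(b^n\leq c)\}$ recalled in Section~\ref{preliminaries}). Putting these steps together yields $t_1^{{\cal A}/F}(a/F)=t_2^{{\cal A}/F}(a/F)$, hence ${\cal A}/F\vDash\varphi(a/F)$, i.e.\ $a/[d(t_1^{\cal A}(a),t_2^{\cal A}(a)))\in{\cal A}/[d(t_1^{\cal A}(a),t_2^{\cal A}(a)))(\varphi)$, as claimed. There is no real obstacle here; the only point requiring a modicum of care is the bookkeeping that the quotient map preserves the interpretations of the terms $t_1,t_2$, which is just the general fact that homomorphisms commute with term functions.
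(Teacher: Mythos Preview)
Your proof is correct and is exactly the natural argument the paper has in mind; in fact the paper states Lemma~\ref{l3.11} without proof, treating it as immediate from the definitions, and your write-up simply makes explicit the routine verification (quotient map commutes with term operations, and $b\in[b)$).
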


\begin{proposition}
The following are equivalent:

\begin{enumerate}
\item\label{formegalterm1} ${\cal A}$ has $\varphi $--LP;
\item\label{formegalterm2} for all $a\in A$, there exists $e\in {\cal A}(\varphi )$ such that $d(a,e)\in [d(t_1^{\cal A}(a),t_2^{\cal A}(a)))$.
\end{enumerate}\label{formegalterm}\end{proposition}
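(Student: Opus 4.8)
The plan is to unwind both conditions into statements purely about filters, using the bijection between congruences and filters of $\cal A$ recalled in Section \ref{preliminaries}, and then to observe that (\ref{formegalterm2}) is exactly the instance of $\varphi$--LP obtained by taking the \emph{principal} filter $[d(t_1^{\cal A}(a),t_2^{\cal A}(a)))$, while conversely every instance of $\varphi$--LP reduces to such a principal one by monotonicity.

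First I would record the two elementary facts that do all the work. For any filter $F$ of $\cal A$ and any $x,y\in A$ we have $x/F=y/F$ iff $x\leftrightarrow y=d(x,y)\in F$, by the description of the congruence modulo $F$; and since $[d(x,y))$ is the smallest filter containing $d(x,y)$, this is in turn equivalent to $[d(x,y))\subseteq F$. Because $\varphi(v)$ is the atomic formula $t_1(v)\approx t_2(v)$ and the canonical surjection $\cal A\rightarrow\cal A/F$ is a residuated lattice morphism, for a filter $F$ and $a\in A$ the condition $a/F\in({\cal A}/F)(\varphi)$ means precisely $t_1^{\cal A}(a)/F=t_2^{\cal A}(a)/F$, hence it is equivalent to $d(t_1^{\cal A}(a),t_2^{\cal A}(a))\in F$, i.e.\ to $[d(t_1^{\cal A}(a),t_2^{\cal A}(a)))\subseteq F$. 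Likewise, ``there exists $e\in{\cal A}(\varphi)$ with $a/F=e/F$'' becomes ``there exists $e\in{\cal A}(\varphi)$ with $d(a,e)\in F$''.

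For (\ref{formegalterm1})$\Rightarrow$(\ref{formegalterm2}), given $a\in A$ I would apply $\varphi$--LP to the single filter $F=[d(t_1^{\cal A}(a),t_2^{\cal A}(a)))$: by Lemma \ref{l3.11} we have $a/F\in({\cal A}/F)(\varphi)$, so $\varphi$--LP yields $e\in{\cal A}(\varphi)$ with $a/F=e/F$, that is $d(a,e)\in F=[d(t_1^{\cal A}(a),t_2^{\cal A}(a)))$, which is (\ref{formegalterm2}). For (\ref{formegalterm2})$\Rightarrow$(\ref{formegalterm1}), let $F$ be an arbitrary filter of $\cal A$ and $a\in A$ with $a/F\in({\cal A}/F)(\varphi)$; by the reformulation above this says $[d(t_1^{\cal A}(a),t_2^{\cal A}(a)))\subseteq F$. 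Applying (\ref{formegalterm2}) to $a$ gives $e\in{\cal A}(\varphi)$ with $d(a,e)\in[d(t_1^{\cal A}(a),t_2^{\cal A}(a)))\subseteq F$, hence $a/F=e/F$; since $F$ and $a$ were arbitrary, $\cal A$ has $\varphi$--LP.

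There is essentially no obstacle here: the whole content is the translation ``congruences $\leftrightarrow$ filters'' together with the remark that $d(a,e)$ can land in no smaller filter than $[d(t_1^{\cal A}(a),t_2^{\cal A}(a)))$, so landing in that one forces it into every larger $F$. The only point requiring a little care is that $\varphi$ is \emph{atomic}, so that ``$a/F$ satisfies $\varphi$'' is literally the single equation $t_1^{\cal A}(a)/F=t_2^{\cal A}(a)/F$ in the quotient and nothing more --- this is exactly what lets one replace an arbitrary filter $F$ by the principal filter generated by $d(t_1^{\cal A}(a),t_2^{\cal A}(a))$ and invoke Lemma \ref{l3.11}.
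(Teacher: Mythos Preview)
Your proof is correct and follows essentially the same approach as the paper's: both directions hinge on applying Lemma~\ref{l3.11} to the principal filter $[d(t_1^{\cal A}(a),t_2^{\cal A}(a)))$ for (\ref{formegalterm1})$\Rightarrow$(\ref{formegalterm2}), and on the inclusion $[d(t_1^{\cal A}(a),t_2^{\cal A}(a)))\subseteq F$ for (\ref{formegalterm2})$\Rightarrow$(\ref{formegalterm1}). Your version is slightly more explicit in isolating the dictionary between congruence classes and membership in filters up front, but the argument is the same.
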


\begin{proof} (\ref{formegalterm1})$\Rightarrow $(\ref{formegalterm2}): Let $a\in A$. By Lemma \ref{l3.11}, it follows that there exists $e\in {\cal A}(\varphi )$ such that $a/[d(t_1^{\cal A}(a),t_2^{\cal A}(a)))=e/[d(t_1^{\cal A}(a),t_2^{\cal A}(a)))$, thus $d(a,e)/[d(t_1^{\cal A}(a),t_2^{\cal A}(a)))=d(a/[d(t_1^{\cal A}(a),t_2^{\cal A}(a))),e/[d(t_1^{\cal A}(a),t_2^{\cal A}(a))))=1$, so $d(a,e)\in [d(t_1^{\cal A}(a),t_2^{\cal A}(a)))$.

\noindent (\ref{formegalterm2})$\Rightarrow $(\ref{formegalterm1}): Let $F$ be an arbitrary filter of ${\cal A}$ and $a\in A$ such that $a/F\in ({\cal A}/F)(\varphi )$, thus $d(t_1^{\cal A}(a),t_2^{\cal A}(a))/F=d(t_1^{{\cal A}/F}(a/F),t_2^{{\cal A}/F}(a/F))=1$, which means that $d(t_1^{\cal A}(a),t_2^{\cal A}(a))\in F$, so $[d(t_1^{\cal A}(a),t_2^{\cal A}(a)))\subseteq F$. The hypothesis of this implication ensures us that there exists an $e\in {\cal A}(\varphi )$ such that $d(a,e)\in [d(t_1^{\cal A}(a),t_2^{\cal A}(a)))$. It follows that $d(a,e)\in F$, which means that $a/F=e/F\in {\cal A}(\varphi )/F$, therefore $({\cal A}/F)(\varphi )={\cal A}(\varphi )/F$, thus $F$ has $\varphi $--LP. So ${\cal A}$ has $\varphi $--LP.\end{proof}

\begin{corollary}{\rm \cite{ggcm}}
The following are equivalent:

\begin{enumerate}
\item\label{alnostru1} ${\cal A}$ has BLP;
\item\label{alnostru2} for all $a\in A$, there exists $e\in {\cal B}({\cal A})$ such that $d(a,e)\in [a\vee \neg \, a)$.
\end{enumerate}\label{alnostru}\end{corollary}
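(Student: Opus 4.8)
The plan is to obtain this corollary as an immediate specialization of Proposition \ref{formegalterm} to the atomic formula that defines Boolean elements. First I would observe that the formula $\varphi (v):\ v\vee \neg \, v=1$ is indeed atomic in ${\bf L}_{\rm res}$, of the shape $t_1(v)\approx t_2(v)$ with $t_1(v)=v\vee \neg \, v$ and $t_2(v)=1$ (the second being a constant term). By Lemma \ref{cbool}, (\ref{cbool1}), we have ${\cal A}(\varphi )={\cal B}({\cal A})$, so the statement ``${\cal A}$ has $\varphi $--LP'' from Proposition \ref{formegalterm} is precisely ``${\cal A}$ has BLP'' in the sense of Example \ref{resboolex}, which gives the identification needed to match (\ref{formegalterm1}) with (\ref{alnostru1}).

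Next I would compute the principal filter appearing in Proposition \ref{formegalterm}, (\ref{formegalterm2}), namely $[d(t_1^{\cal A}(a),t_2^{\cal A}(a)))=[d(a\vee \neg \, a,1))$. Using Lemma \ref{latrez}, (\ref{latrez5}), one has $(a\vee \neg \, a)\rightarrow 1=1$ and $1\rightarrow (a\vee \neg \, a)=a\vee \neg \, a$, hence $d(a\vee \neg \, a,1)=((a\vee \neg \, a)\rightarrow 1)\wedge (1\rightarrow (a\vee \neg \, a))=1\wedge (a\vee \neg \, a)=a\vee \neg \, a$. Therefore $[d(t_1^{\cal A}(a),t_2^{\cal A}(a)))=[a\vee \neg \, a)$, which is exactly the filter occurring in (\ref{alnostru2}).

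Substituting these two identifications into Proposition \ref{formegalterm} turns condition (\ref{formegalterm2}) into ``for all $a\in A$ there exists $e\in {\cal B}({\cal A})$ with $d(a,e)\in [a\vee \neg \, a)$'', so the equivalence of (\ref{alnostru1}) and (\ref{alnostru2}) follows verbatim. I do not expect any real obstacle here; the only point requiring a moment's care is the biresiduum computation $d(a\vee \neg \, a,1)=a\vee \neg \, a$, which is a one-line consequence of $1$ being the top element together with $1\rightarrow x=x$. (Alternatively one could reprove the corollary directly by specializing the two implications in the proof of Proposition \ref{formegalterm}, using Lemma \ref{l3.11}, but invoking the proposition as a black box is cleaner.)
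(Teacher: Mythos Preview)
Your proposal is correct and matches the paper's own treatment: the corollary is stated without proof as an immediate specialization of Proposition \ref{formegalterm} to the atomic formula $\varphi(v):\ v\vee\neg\,v=1$, and your computation $d(a\vee\neg\,a,1)=a\vee\neg\,a$ via Lemma \ref{latrez}, (\ref{latrez5}), is exactly the one-line check needed to make that specialization explicit.
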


\begin{corollary}
The following are equivalent:

\begin{enumerate}
\item\label{corilp1} ${\cal A}$ has ILP;
\item\label{corilp2} for all $a\in A$, there exists an idempotent element $e\in A$ such that $d(a,e)\in [d(a,a^2))$.
\end{enumerate}\label{corilp}\end{corollary}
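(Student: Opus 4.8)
The plan is to read this corollary off directly as the instance of Proposition~\ref{formegalterm} attached to the atomic formula $\varphi(v):\ v^2\approx v$, in complete analogy with the way Corollary~\ref{alnostru} records the $\varphi$--LP for $\varphi(v):\ v\vee\neg\,v=1$. First I would fix notation: write $\varphi(v)$ as $t_1(v)\approx t_2(v)$ with $t_1(v)=v^2$ and $t_2(v)=v$ (this is an atomic formula of ${\bf L}_{\rm res}$, since $v^2=v\odot v$ is a term). Then the description of ${\cal A}(\varphi)$ displayed just before Lemma~\ref{l3.11} gives ${\cal A}(\varphi)=\{a\in A\ |\ a^2=a\}={\cal I}({\cal A})$, so ``$e\in{\cal A}(\varphi)$'' means ``$e$ is an idempotent element of $A$'', and by the definition in Example~\ref{residex} the $\varphi$--LP for ${\cal A}$ is exactly the ILP. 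Hence condition (\ref{formegalterm1}) of Proposition~\ref{formegalterm} is condition (\ref{corilp1}) of the corollary.

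Second, I would rewrite the filter $[d(t_1^{\cal A}(a),t_2^{\cal A}(a)))$ occurring in condition (\ref{formegalterm2}). With the chosen $t_1,t_2$ it is $[d(a^2,a))$, and since the biresiduum is symmetric, $d(a^2,a)=(a^2\rightarrow a)\wedge(a\rightarrow a^2)=d(a,a^2)$, so this filter equals $[d(a,a^2))$. (Equally well, one may read $v^2\approx v$ as $t_1(v)=v$, $t_2(v)=v^2$ and get $d(a,a^2)$ on the nose; the choice is immaterial.) Substituting these two identifications into Proposition~\ref{formegalterm} turns its condition (\ref{formegalterm2}) word for word into condition (\ref{corilp2}). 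Thus the equivalence (\ref{corilp1})$\Leftrightarrow$(\ref{corilp2}) is nothing but the equivalence (\ref{formegalterm1})$\Leftrightarrow$(\ref{formegalterm2}) specialized to this $\varphi$, and the proof is complete.

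I do not expect a genuine obstacle here; the only point requiring (trivial) care is the bookkeeping that the two arguments of $d$ may be swapped, so that $[d(a^2,a))$ and $[d(a,a^2))$ are literally the same filter. For completeness I would note that, should one prefer not to cite Proposition~\ref{formegalterm}, the two implications can be run by hand: for (\ref{corilp1})$\Rightarrow$(\ref{corilp2}), apply Lemma~\ref{l3.11} with $F=[d(a,a^2))$ to lift the idempotent $a/F$ to some $e\in{\cal I}({\cal A})$, which then satisfies $d(a,e)\in F=[d(a,a^2))$; for (\ref{corilp2})$\Rightarrow$(\ref{corilp1}), given any filter $F$ with $a/F$ idempotent one has $d(a,a^2)\in F$, hence $[d(a,a^2))\subseteq F$, so the witness $e$ from (\ref{corilp2}) satisfies $d(a,e)\in F$, i.e.\ $a/F=e/F\in{\cal I}({\cal A})/F$, whence $F$ has the ILP, and so ${\cal A}$ has the ILP.
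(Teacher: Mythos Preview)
Your proposal is correct and matches the paper's intent exactly: the corollary is stated without proof in the paper, as it is the immediate specialization of Proposition~\ref{formegalterm} to the atomic formula $\varphi(v):\ v^2\approx v$, precisely as you spell out. Your additional self-contained argument is also fine and simply reproduces the proof of Proposition~\ref{formegalterm} in this particular case.
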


\begin{remark}
Similarly to the two previous corollaries, we can obtain that the following are equivalent:

\begin{enumerate}
\item\label{corrlp1} ${\cal A}$ has RLP;
\item\label{corrlp2} for all $a\in A$, there exists $e\in {\rm Reg}({\cal A})$ such that $d(a,e)\in [d(a,\neg \, \neg \, a))$.
\end{enumerate}

But here, for all $a\in A$, we may take $e=\neg \, \neg \, a\in {\rm Reg}({\cal A})$, and we have, trivially, $d(a,e)=d(a,\neg \, \neg \, a)\in [d(a,\neg \, \neg \, a))$. So this is just another way of verifying that any residuated lattice has RLP.\label{corrlp}\end{remark}

\section{Lifting Properties in Residuated Lattices}
\label{lpsreslat}

In this section we study from the algebraic point of view two lifting properties in residuated lattices, namely BLP and ILP. The main result of the present section is a characterization theorem for residuated lattices with BLP.

Throughout this section, $A$ will be an arbitrary residuated lattice, unless mentioned otherwise. In the rest of this paper, we let residuated lattices and all other algebraic structures be referred to by their underlying sets, in accordance to the convention made in Section \ref{preliminaries}.

In the following sections, we shall use the next remark without referencing it.

\begin{remark}
For every filter $F$ of a residuated lattice $A$, it is trivial that:

\begin{enumerate}
\item\label{cealincl1} ${\cal B}(A)/F\subseteq {\cal B}(A/F)$, thus $F$ has BLP iff ${\cal B}(A/F)\subseteq {\cal B}(A)/F$ iff ${\cal B}(p_F)$ is surjective, where ${\cal B}(p_F)$ is the image through the functor ${\cal B}$ of the canonical surjection $p_F:A\rightarrow A/F$ (see also \cite{ggcm});
\item\label{cealincl2} ${\cal I}(A)/F\subseteq {\cal I}(A/F)$, thus $F$ has ILP iff ${\cal I}(A/F)\subseteq {\cal I}(A)/F$.\end{enumerate}\label{cealincl}\end{remark}

\begin{corollary}
For every filter $F$ of a residuated lattice $A$:

\begin{enumerate}

\item\label{clar1} if ${\cal B}(A/F)=\{0/F,1/F\}$, then $F$ has BLP;
\item\label{clar2} if ${\cal I}(A/F)=\{0/F,1/F\}$, then $F$ has ILP.\end{enumerate}\label{clar}\end{corollary}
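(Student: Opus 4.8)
The plan is to reduce both items directly to the two inclusions recorded in Remark~\ref{cealincl}, together with the observation made in Remark~\ref{imme}, (\ref{imme1}), that $\{0,1\}\subseteq {\cal B}(A)\subseteq {\cal I}(A)$ holds in every residuated lattice. No computation with the operations of $A$ is needed: the whole argument is just an unravelling of the definition of the lifting property through Remark~\ref{cealincl}, using that the two distinguished constants $0$ and $1$ are Boolean (hence idempotent) elements of $A$.

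For (\ref{clar1}), I would argue as follows. By Remark~\ref{cealincl}, (\ref{cealincl1}), the filter $F$ has BLP exactly when ${\cal B}(A/F)\subseteq {\cal B}(A)/F$. Since $0,1\in {\cal B}(A)$, their classes $0/F$ and $1/F$ lie in ${\cal B}(A)/F$, so $\{0/F,1/F\}\subseteq {\cal B}(A)/F$. Under the hypothesis ${\cal B}(A/F)=\{0/F,1/F\}$ this is precisely the required inclusion, hence $F$ has BLP.

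For (\ref{clar2}), the reasoning is analogous: by Remark~\ref{cealincl}, (\ref{cealincl2}), $F$ has ILP exactly when ${\cal I}(A/F)\subseteq {\cal I}(A)/F$. Since $0,1\in {\cal I}(A)$ (again by Remark~\ref{imme}, (\ref{imme1})), we have $\{0/F,1/F\}\subseteq {\cal I}(A)/F$, and the hypothesis ${\cal I}(A/F)=\{0/F,1/F\}$ then yields ${\cal I}(A/F)\subseteq {\cal I}(A)/F$, that is, $F$ has ILP.

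I do not anticipate any genuine obstacle here. The only point that has to be kept in mind is that $0$ and $1$ really are elements of ${\cal B}(A)$ (and therefore of ${\cal I}(A)$), so that the two elements forming ${\cal B}(A/F)$, respectively ${\cal I}(A/F)$, under the stated hypotheses are automatically classes modulo $F$ of elements of ${\cal B}(A)$, respectively ${\cal I}(A)$; the rest is immediate from Remark~\ref{cealincl}.
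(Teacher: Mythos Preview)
Your argument is correct and is exactly the paper's approach: the paper's proof consists of the single line ``By Remarks \ref{cealincl} and \ref{imme}, (\ref{imme1})'', and you have simply spelled out that citation in full.
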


\begin{proof} By Remarks \ref{cealincl} and \ref{imme}, (\ref{imme1}).\end{proof}

\begin{proposition}

Let $A$ be a residuated lattice and $F$ be a filter of $A$. Then:

\begin{enumerate}
\item\label{propmeproud1} if ${\cal B}(A)/F=A/F$, then every filter of $A$ that includes $F$ has BLP and ILP;
\item\label{propmeproud2} if ${\cal I}(A)/F=A/F$, then every filter of $A$ that includes $F$ has ILP.\end{enumerate}\label{propmeproud}\end{proposition}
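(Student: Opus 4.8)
The plan is to isolate one elementary observation and apply it twice. The observation: for any subset $X$ of $A$ and any filters $F\subseteq G$ of $A$, the equality $X/F=A/F$ forces $X/G=A/G$. I would prove this directly --- given $x\in A$, the hypothesis furnishes $y\in X$ with $x/F=y/F$, i.e.\ $d(x,y)\in F$; since $F\subseteq G$ we get $d(x,y)\in G$, hence $x/G=y/G\in X/G$, while the reverse inclusion $X/G\subseteq A/G$ is trivial. The only point to be careful about is that a larger filter yields a coarser congruence, namely that $x/F=y/F$ iff $d(x,y)\in F$, which is immediate from the definition of the congruence modulo a filter recalled in Section \ref{preliminaries}.

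For part (\ref{propmeproud1}): assume ${\cal B}(A)/F=A/F$ and let $G$ be a filter with $F\subseteq G$. The observation applied with $X={\cal B}(A)$ gives ${\cal B}(A)/G=A/G$. Then, by Remark \ref{cealincl}, (\ref{cealincl1}), we have $A/G={\cal B}(A)/G\subseteq {\cal B}(A/G)\subseteq A/G$, so ${\cal B}(A/G)={\cal B}(A)/G$, which is exactly the assertion that $G$ has BLP. For ILP: by Remark \ref{imme}, (\ref{imme1}), ${\cal B}(A)\subseteq {\cal I}(A)$, hence $A/G={\cal B}(A)/G\subseteq {\cal I}(A)/G\subseteq A/G$, so ${\cal I}(A)/G=A/G$; combining with Remark \ref{cealincl}, (\ref{cealincl2}), this gives ${\cal I}(A/G)={\cal I}(A)/G$, i.e.\ $G$ has ILP.

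For part (\ref{propmeproud2}): assume ${\cal I}(A)/F=A/F$ and let $G$ be a filter with $F\subseteq G$. The observation with $X={\cal I}(A)$ gives ${\cal I}(A)/G=A/G$, and then Remark \ref{cealincl}, (\ref{cealincl2}), yields ${\cal I}(A/G)={\cal I}(A)/G$, so $G$ has ILP.

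I do not anticipate a genuine obstacle: the whole argument is bookkeeping with the inclusions ${\cal B}(A)\subseteq {\cal I}(A)\subseteq A$ together with ${\cal B}(A)/G\subseteq {\cal B}(A/G)$ and ${\cal I}(A)/G\subseteq {\cal I}(A/G)$ from Remark \ref{cealincl}. A slightly more conceptual alternative for part (\ref{propmeproud1}) would be to note that ${\cal B}(A)/F=A/F$ forces ${\cal B}(A/F)=A/F$, so $A/F$ is induced by a Boolean algebra and in particular is a ${\rm G\ddot{o}del}$ algebra, and then to invoke that every quotient of $A/F$ inherits these properties, via Examples \ref{resboolex} and \ref{residex}; but the direct computation above is shorter and needs no isomorphism theorem.
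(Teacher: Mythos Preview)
Your proof is correct and follows essentially the same approach as the paper: both arguments boil down to the observation that $X/F=A/F$ with $F\subseteq G$ forces $X/G=A/G$ (via $d(x,y)\in F\subseteq G$), combined with the inclusions ${\cal B}(A)/G\subseteq {\cal B}(A/G)$, ${\cal I}(A)/G\subseteq {\cal I}(A/G)$, and ${\cal B}(A)\subseteq {\cal I}(A)$. The only difference is organizational---you isolate the observation up front and apply it uniformly, whereas the paper does the computation inline for BLP and then, for the ILP part of (\ref{propmeproud1}), first deduces ${\cal I}(A)/F=A/F$ and defers to (\ref{propmeproud2}).
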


\begin{proof} (\ref{propmeproud1}): Let $G$ be a filter of $A$ such that $F\subseteq G$. Assume that ${\cal B}(A)/F=A/F$, which means that, for any $a\in A$, there exists an $e\in {\cal B}(A)$ such that $a/F=e/F$, that is $a\leftrightarrow e\in F$, hence $a\leftrightarrow e\in G$, so $a/G=e/G$, hence ${\cal B}(A)/G=A/G$, thus $A/G={\cal B}(A)/G\subseteq {\cal B}(A/G)\subseteq A/G$ by Remark \ref{cealincl}, (\ref{cealincl1}), hence ${\cal B}(A)/G={\cal B}(A/G)$, so $G$ has BLP in $A$. According to Lemma \ref{cbool}, (\ref{cbool3}) and (\ref{cbool0}), ${\cal B}(A)/F=A/F$ implies $A/F={\cal B}(A)/F\subseteq {\cal I}(A)/F\subseteq A/F$, hence ${\cal I}(A)/F=A/F$. The fact that $G$ also has ILP now follows from (\ref{propmeproud2}) below.

\noindent (\ref{propmeproud2}): Analogous to the proof of the statement on BLP from (\ref{propmeproud1}).\end{proof}

\begin{corollary}
If $A$ is a residuated lattice and $F$ is a filter of $A$ such that $A/F=\{0/F,1/F\}$, then every filter of $A$ that includes $F$ has BLP and ILP.\label{corutil}\end{corollary}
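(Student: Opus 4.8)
The plan is to reduce the statement to Proposition \ref{propmeproud}, (\ref{propmeproud1}), by checking that its hypothesis is automatically satisfied here. So first I would observe that the constants $0$ and $1$ lie in the Boolean center: by Remark \ref{imme}, (\ref{imme1}), $\{0,1\}\subseteq {\cal B}(A)$, hence $\{0/F,1/F\}\subseteq {\cal B}(A)/F$. Combining this with the hypothesis $A/F=\{0/F,1/F\}$ and the obvious inclusion ${\cal B}(A)/F\subseteq A/F$, I get ${\cal B}(A)/F=A/F$.

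Once this equality is in place, the conclusion is immediate: Proposition \ref{propmeproud}, (\ref{propmeproud1}), applied to the filter $F$, states precisely that if ${\cal B}(A)/F=A/F$ then every filter of $A$ that includes $F$ has both BLP and ILP. That finishes the argument.

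There is no real obstacle here; the only thing to be careful about is invoking the right earlier item, namely that the two constants already belong to ${\cal B}(A)$, which is what turns the weak hypothesis $A/F=\{0/F,1/F\}$ into the stronger-looking hypothesis ${\cal B}(A)/F=A/F$ needed by Proposition \ref{propmeproud}. (Alternatively, one could bypass Proposition \ref{propmeproud} and argue directly via Corollary \ref{clar}: for any filter $G\supseteq F$ one has $A/G$ a quotient of $A/F$, so $A/G=\{0/G,1/G\}$, whence ${\cal B}(A/G)=\{0/G,1/G\}$ and ${\cal I}(A/G)=\{0/G,1/G\}$, and Corollary \ref{clar} gives BLP and ILP for $G$; but the first route is shorter given what is already available.)
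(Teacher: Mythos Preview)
Your proof is correct and follows essentially the same route as the paper: both invoke Remark \ref{imme}, (\ref{imme1}), to see that $0,1\in{\cal B}(A)$ (so that $A/F=\{0/F,1/F\}$ forces ${\cal B}(A)/F=A/F$), and then apply Proposition \ref{propmeproud}, (\ref{propmeproud1}). Your write-up is in fact a bit more explicit than the paper's one-line citation, and the alternative via Corollary \ref{clar} you sketch is also valid.
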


\begin{proof} By Proposition \ref{propmeproud}, (\ref{propmeproud1}), and Remark \ref{imme}, (\ref{imme1}), applied to the residuated lattice $A/F$.\end{proof}

\begin{corollary}
Let $A$ be a residuated lattice. Then:

\begin{enumerate}
\item\label{cormeproud0} the trivial filter and the improper filter have BLP and ILP in $A$;

\item\label{cormeproud1} if ${\cal B}(A)=A$, then $A$ has BLP and ILP, that is every Boolean algebra induces a residuated lattice with BLP and ILP;
\item\label{cormeproud2} if ${\cal I}(A)=A$, then $A$ has ILP, that is every $G\ddot{o}del$ algebra has ILP.\end{enumerate}\label{cormeproud}\end{corollary}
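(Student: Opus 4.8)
The plan is to derive all three items as immediate consequences of the machinery already assembled in the excerpt, in particular Corollary \ref{corutil}, Corollary \ref{clar} and Remark \ref{imme}, applied to suitably small quotients. First I would dispose of item (\ref{cormeproud0}): the trivial filter $\{1\}$ and the improper filter $A$ are precisely the congruences $\Delta_A$ and $A^2$ under the lattice isomorphism $\mathrm{Filt}(A)\cong\mathrm{Con}(A)$, so the last statement of Remark \ref{exultriv} (quoted in Example \ref{resboolex} and Example \ref{residex}) already tells us they have the $\varphi$--LP for every one--variable formula $\varphi$, in particular for the Boolean and idempotent formulas; alternatively, one can observe that $A/A$ is the trivial residuated lattice, for which $\{0/A,1/A\}$ is everything, so Corollary \ref{clar} applies, while for $\{1\}$ the quotient map is an isomorphism and the containments in Remark \ref{cealincl} are equalities.

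For item (\ref{cormeproud1}), suppose $\mathcal{B}(A)=A$. Taking $F=\{1\}$ in Proposition \ref{propmeproud}, (\ref{propmeproud1}), we have $\mathcal{B}(A)/\{1\}=A/\{1\}$ (both are just $A$ up to the canonical isomorphism), hence every filter of $A$ that includes $\{1\}$ — that is, every filter of $A$ — has both BLP and ILP; so $A$ has BLP and ILP. Since, by the discussion in Section \ref{preliminaries}, $\mathcal{B}(\mathcal{A})=A$ holds exactly when $A$ is (induced by) a Boolean algebra, this is the asserted statement about Boolean algebras. Item (\ref{cormeproud2}) is entirely parallel: if $\mathcal{I}(A)=A$, apply Proposition \ref{propmeproud}, (\ref{propmeproud2}), with $F=\{1\}$ to get that every filter of $A$ has ILP, i.e. $A$ has ILP; and by \cite[Proposition $3.1$]{eu2}, recalled in Section \ref{preliminaries}, $\mathcal{I}(A)=A$ is equivalent to $A$ being a G\"odel algebra, which gives the final clause.

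I do not anticipate any genuine obstacle here — the statement is a corollary in the literal sense, packaging three one--line deductions from Propositions and Remarks proved just above. The only point requiring a modicum of care is the bookkeeping identification "$\mathcal{B}(A)/\{1\}=A/\{1\}$": strictly, $A/\{1\}$ is canonically isomorphic to $A$ via $a\mapsto a/\{1\}$, and under this isomorphism $\mathcal{B}(A)/\{1\}$ corresponds to $\mathcal{B}(A)$, so the hypothesis $\mathcal{B}(A)=A$ really does yield $\mathcal{B}(A)/\{1\}=A/\{1\}$; one should either invoke this canonical identification explicitly or, more cleanly, just cite Corollary \ref{corutil} applied to the improper filter together with the observation that $A$ itself includes every filter trivially — though that route only gives the statement "every filter has BLP and ILP" directly for the quotient $A/A$, so the cleaner argument is indeed the $F=\{1\}$ application of Proposition \ref{propmeproud} as above. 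Accordingly the proof I would write is simply: \emph{(\ref{cormeproud0}) follows from Remark \ref{exultriv}, or from Corollary \ref{clar} applied to the quotients $A/\{1\}$ and $A/A$. (\ref{cormeproud1}) and (\ref{cormeproud2}) follow from Proposition \ref{propmeproud}, (\ref{propmeproud1}) and (\ref{propmeproud2}) respectively, taking $F=\{1\}$, together with the characterizations of $\mathcal{B}(A)=A$ and $\mathcal{I}(A)=A$ recalled in Section \ref{preliminaries}.}
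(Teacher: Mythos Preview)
Your proposal is correct and essentially matches the paper's own proof: the paper likewise derives (\ref{cormeproud1}) and (\ref{cormeproud2}) as immediate consequences of Proposition \ref{propmeproud}, and for (\ref{cormeproud0}) it points to Remark \ref{cealincl} together with Examples \ref{resboolex}, \ref{residex} and Remark \ref{imme}, which amounts to the same triviality you extract from Remark \ref{exultriv}. The only cosmetic difference is which of the equivalent trivial observations is cited for (\ref{cormeproud0}); there is no substantive divergence.
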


\begin{proof} We felt it useful to put together these results, although they are part of Examples \ref{resboolex} and \ref{residex} above; of course, we also need to use Remark \ref{imme}, (\ref{imme1}), to obtain the first statement. 

Here, (\ref{cormeproud0}) follows immediately from Remark \ref{cealincl}, while (\ref{cormeproud1}) and (\ref{cormeproud2}) are immediate consequences of Proposition \ref{propmeproud}.\end{proof}

In what follows, we shall use the fact that the filters $[1)=\{1\}$ and $[0)=A$ have BLP and ILP without referencing the previous two corollaries.

\begin{corollary}
If a residuated lattice $A$ has BLP, then, for all $e\in {\cal B}(A)$, the residuated lattice $[e)$ has BLP.\label{corggcm}\end{corollary}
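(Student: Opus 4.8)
The plan is to reduce the statement about BLP for the principal filter $[e)$ to the characterization of BLP given in Corollary \ref{alnostru}, and then to transfer a witness from $A$ into $[e)$ using the structure of $[e)$ as a residuated lattice (recalled at the end of Section \ref{preliminaries}: $([e),\vee ,\wedge ,\odot ,\rightarrow _e,e,1)$, with $a\rightarrow _eb=e\vee (a\rightarrow b)$). First I would fix $e\in {\cal B}(A)$, write $B=[e)$ for this residuated lattice, and take an arbitrary $a\in B$ (so $e\leq a$). Using Corollary \ref{alnostru} applied to $B$, it suffices to produce $f\in {\cal B}(B)$ with $d_B(a,f)\in [d_B(a,a^2))_B$, where $d_B$ and $[\cdot )_B$ denote the biresiduum and generated filter computed inside $B$. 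The natural candidate comes from applying the hypothesis (that $A$ has BLP, hence Corollary \ref{alnostru} holds in $A$) to the element $a$ viewed in $A$: there is $b\in {\cal B}(A)$ with $d_A(a,b)\in [a\vee \neg \, a)_A$.

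The key technical steps I expect are: (1) relate ${\cal B}(B)$ to ${\cal B}(A)$ — I claim $b\mapsto e\vee b$ maps ${\cal B}(A)$ into ${\cal B}(B)$, since in $B$ the negation of $x$ is $\neg_B x=x\rightarrow_e e=e\vee(x\rightarrow e)=e\vee\neg\, x\vee(\text{stuff})$; more carefully, $\neg_B x=e\vee\neg\, x$ for $x\in B$ by Lemma \ref{cbool},(\ref{cbool2}) applied to $A$ (since $e\in{\cal B}(A)$, $e\rightarrow x=\neg\, e\vee x$, but I want $x\rightarrow_e e$), and then checking $(e\vee b)\vee\neg_B(e\vee b)=1$ using that $b\vee\neg\, b=1$; (2) compute the biresiduum in $B$: for $x,y\in B$, $d_B(x,y)=(x\rightarrow_e y)\wedge(y\rightarrow_e x)=(e\vee(x\rightarrow y))\wedge(e\vee(y\rightarrow x))=e\vee((x\rightarrow y)\wedge(y\rightarrow x))=e\vee d_A(x,y)$, using distributivity of $\vee$ over $\wedge$ for the Boolean element $e$ (this is where one uses that $e$ is Boolean, via Lemma \ref{cbool}); (3) relate the generated filters: for $x\in B$, $[x)_B=[x)_A$ because a subset of $B$ is a filter of $B$ iff it is a filter of $A$ contained in $B$, or more directly $[x)_B=\{c\in B\mid (\exists n)\,x^n\le c\}=[x)_A$ since $[x)_A\subseteq B$ as $e\le x\le x^n\rightarrow$... wait, rather $e\le x$ gives $e=e^n\le x^n$ so indeed $[x)_A\subseteq[e)_A=B$. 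Also note $a^2$ computed in $B$ equals $a\odot a$ computed in $A$ since $\odot$ is induced, and $a^2\ge e\odot e=e$ so $a^2\in B$; and $a\vee\neg_B a=1$-part aside, $[d_B(a,a^2))_B=[e\vee d_A(a,a^2))_A$.

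Putting these together: set $f=e\vee b\in{\cal B}(B)$. Then $d_B(a,f)=e\vee d_A(a,e\vee b)$, and I need $d_A(a,e\vee b)\le$ something in $[e\vee d_A(a,a^2))_A=[e\vee(a\rightarrow a^2))_A$ (using $a^2\le a$ so $d_A(a,a^2)=a\rightarrow a^2$). The cleanest route: show $d_A(a,b)\le d_A(a,e\vee b)$ is false in general, so instead I should pick the witness in $B$ more cleverly, perhaps directly taking $f$ to be the image of $b$ under the homomorphism $A\to[e)$, $x\mapsto e\vee x$ — but actually the honest move is to use that $A$ has BLP to conclude that every filter of $A$ containing... no. The correct and simplest argument: since $A$ has BLP, by Remark \ref{cealincl},(\ref{cealincl1}) every filter of $A$ has BLP; the filter $[e)_A=B$ of $A$ then has BLP as a \emph{filter}, i.e. ${\cal B}(A/B)={\cal B}(A)/B$, but that is about the quotient, not about $B$ as a residuated lattice — these differ, so that shortcut fails and I must go through the explicit computation above.

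The main obstacle I anticipate is step (2)–(3): correctly identifying the biresiduum, squaring, and generated-filter operations of the residuated lattice $[e)$ in terms of those of $A$, and in particular verifying that $\vee$ distributes over the relevant meets because $e\in{\cal B}(A)$ — once those identities are in hand, the implication from Corollary \ref{alnostru} for $A$ to Corollary \ref{alnostru} for $[e)$ is a short substitution: given $a\in[e)$, take $b\in{\cal B}(A)$ with $d_A(a,b)\in[a\vee\neg\, a)_A$, observe $[a\vee\neg\, a)_A\subseteq[e)_A$ is not automatic so instead replace $b$ by $f=(e\vee b)$, note $f\in{\cal B}([e))$, and compute $d_{[e)}(a,f)=e\vee d_A(a,e\vee b)$, then bound this inside $[d_{[e)}(a,a^2))_{[e)}=[e\vee(a\rightarrow a^2))_A$ using $d_A(a,e\vee b)\le(a\rightarrow(e\vee b))\ge(a\rightarrow b)$ wait the inequality goes the wrong way — so the genuinely delicate point is to check that the filter membership $d_A(a,b)\in[a\vee\neg\, a)_A$ upgrades, after joining with $e$, to $d_{[e)}(a,f)\in[d_{[e)}(a,a^2))_{[e)}$; I expect this to work because joining with $e$ is a residuated lattice morphism $A\to[e)$ and morphisms send $[x)$ into $[\varphi(x))$, so $d_A(a,b)\in[a\vee\neg\, a)_A$ maps to $e\vee d_A(a,b)=d_{[e)}(a,f)\in[e\vee(a\vee\neg\, a))_{[e)}=[a\vee\neg_{[e)}a)_{[e)}=[d_{[e)}(a,a^2))_{[e)}$ — and finishing with Corollary \ref{alnostru}.
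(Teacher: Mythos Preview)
Your final idea is correct and in fact coincides with the paper's approach, but you reach it only after a long detour and with some errors along the way. The paper's proof is two citations: \cite[Proposition 2.18]{eu3} gives $[e)\cong A/[\neg\, e)$ as residuated lattices (equivalently, $x\mapsto e\vee x$ is a surjective residuated lattice morphism $A\to [e)$), and \cite[Corollary 5.9]{ggcm} (which is the BLP instance of Proposition~\ref{propcaturi} here) says that every quotient of a residuated lattice with BLP has BLP. That is the whole argument.

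Your write-up, however, has two concrete problems. First, you repeatedly conflate the BLP characterization of Corollary~\ref{alnostru} (namely $d(a,f)\in [a\vee\neg\, a)$) with the ILP characterization of Corollary~\ref{corilp} (namely $d(a,f)\in [d(a,a^2))$): the line ``it suffices to produce $f\in{\cal B}(B)$ with $d_B(a,f)\in[d_B(a,a^2))_B$'' is the ILP condition, not the BLP one, and the final equality ``$[a\vee\neg_{[e)}a)_{[e)}=[d_{[e)}(a,a^2))_{[e)}$'' is unjustified and in general false --- you should simply stop at $[a\vee\neg_{[e)}a)_{[e)}$, which is exactly what Corollary~\ref{alnostru} requires. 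Second, you assert that $x\mapsto e\vee x$ is a residuated lattice morphism $A\to[e)$ without verifying it; this is precisely the content of the cited \cite[Proposition 2.18]{eu3}, and once you have it, the element-level computations in your steps (1)--(3) become unnecessary: morphisms automatically preserve $d$, $\neg$, $\vee$, and send $[x)$ into $[\varphi(x))$, so the BLP witness transfers immediately. In short, drop the explicit biresiduum computations, state (or cite) that $[e)\cong A/[\neg\, e)$, and invoke Proposition~\ref{propcaturi}.
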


\begin{proof} By \cite[Proposition $2.18$]{eu3} and \cite[Corollary $5.9$]{ggcm}.\end{proof}

\begin{proposition}
The following are equivalent for any residuated lattice $A$:

\begin{enumerate}
\item\label{propblp1} $A$ has BLP;
\item\label{propblp2} for all $x\in A$, there exists $e\in {\cal B}(A)$ such that $e\in [x)$ and $\neg \, e\in [\neg \, x)$;
\item\label{propblp3} for all $x,y\in A$ such that $x\odot y=0$, there exists $e\in {\cal B}(A)$ such that $e\in [x)$ and $\neg \, e\in [y)$;
\item\label{propblp4} given any natural $n\geq 2$, for all $x_1,\ldots ,x_n\in A$ such that $x_1\odot \ldots \odot x_n=0$, there exist $e_1,\ldots ,e_n\in {\cal B}(A)$ such that $\displaystyle \bigwedge _{i=1}^ne_i=0$, $e_i\vee e_j=1$ for all $i,j\in \overline{1,n}$ with $i\neq j$, and $e_i\in [x_i)$ for all $i\in \overline{1,n}$.\end{enumerate}\label{propblp}\end{proposition}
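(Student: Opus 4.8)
The plan is to prove this as a cycle of implications $(\ref{propblp1})\Rightarrow(\ref{propblp2})\Rightarrow(\ref{propblp3})\Rightarrow(\ref{propblp1})$ and then $(\ref{propblp3})\Leftrightarrow(\ref{propblp4})$ separately (with the $n=2$ instance of $(\ref{propblp4})$ being exactly $(\ref{propblp3})$). For $(\ref{propblp1})\Rightarrow(\ref{propblp2})$, fix $x\in A$ and apply Corollary \ref{alnostru}: there is $e\in {\cal B}(A)$ with $d(x,e)\in [x\vee \neg \, x)$. I would use the description $[x\vee \neg \, x)=[x)\cap [\neg \, x)$ (from the bullet list on principal filters, since $[x)\cap [\neg\, x)=[x\vee\neg\, x)$), so $d(x,e)\leq x$ up to a power, i.e.\ $d(x,e)\in[x)$, hence $x\rightarrow e\in[x)$; combined with $x\in[x)$ and the filter closure under $\odot$ and $\leq$, one gets $e\in[x)$. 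The symmetric computation using $d(x,e)\in[\neg\, x)$ together with $\neg\, x\le \neg\, e$ whenever $e\le x$ modulo the relevant power — more carefully, using $\neg\, e = \neg\, e$, $d(e,x)\le(e\to x)\wedge(x\to e)$ and Lemma \ref{latrez} to push negations — yields $\neg\, e\in[\neg\, x)$. This is the step I expect to require the most care: getting from $d(x,e)\in[x\vee\neg\, x)$ to the two membership statements $e\in[x)$, $\neg\, e\in[\neg\, x)$ cleanly, because it involves juggling biresiduum inequalities and the fact that $e$ is Boolean (so $e^n=e$, $[e)=\{a:e\le a\}$).

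For $(\ref{propblp2})\Rightarrow(\ref{propblp3})$: given $x\odot y=0$, Lemma \ref{latrez}, (\ref{latrez1}) gives $y\le \neg\, x$, hence $[\neg\, x)\subseteq[y)$; take the $e\in{\cal B}(A)$ provided by $(\ref{propblp2})$ for this $x$, so $e\in[x)$ and $\neg\, e\in[\neg\, x)\subseteq[y)$, which is exactly $(\ref{propblp3})$. For $(\ref{propblp3})\Rightarrow(\ref{propblp1})$, I would verify condition $(\ref{alnostru2})$ of Corollary \ref{alnostru}. Fix $a\in A$; note $a\odot\neg\, a=0$ by Lemma \ref{latrez}, (\ref{latrez1}), so $(\ref{propblp3})$ with $x=a$, $y=\neg\, a$ gives $e\in{\cal B}(A)$ with $e\in[a)$ and $\neg\, e\in[\neg\, a)$. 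Then $a\le e$ modulo a power of $a$, i.e.\ $a\to e = 1$ after multiplying, and $\neg\, a\le\neg\, e$, i.e.\ (by Lemma \ref{latrez}, (\ref{latrez7}) applied contrapositively via $\neg\, e\in[\neg\, a)$) $e\to a$ is large in $[a\vee\neg\, a)$; combining, $d(a,e)=(a\to e)\wedge(e\to a)\in[a\vee\neg\, a)=[a)\cap[\neg\, a)$. The bookkeeping here mirrors the first implication, so once that one is done this is routine.

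For the equivalence with $(\ref{propblp4})$: the direction $(\ref{propblp4})\Rightarrow(\ref{propblp3})$ is immediate by instantiating $n=2$. For $(\ref{propblp3})\Rightarrow(\ref{propblp4})$ I would argue by induction on $n$, or directly: given $x_1\odot\cdots\odot x_n=0$, set $y_i=\bigodot_{j\neq i}x_j$, so $x_i\odot y_i=0$ for each $i$ (using commutativity and associativity of $\odot$), and apply $(\ref{propblp3})$ to get $f_i\in{\cal B}(A)$ with $f_i\in[x_i)$ and $\neg\, f_i\in[y_i)$. One then needs to massage the $f_i$ into a family $e_i$ with $\bigwedge_{i=1}^n e_i=0$ and $e_i\vee e_j=1$ for $i\neq j$; here the natural move is to notice that inside the Boolean algebra ${\cal B}(A)$ we can replace the $f_i$ by a pairwise-orthogonal-complement-style family, e.g.\ build $e_i$ from joins/meets of the $f_j$ so that they become a partition-of-unity-type system, using that $\bigwedge_i\neg\, f_i \le \bigwedge_i y_i$ and $y_i$'s meet down to something controlled by $x_1\odot\cdots\odot x_n=0$. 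The hard part of the whole proposition is precisely this last combinatorial step in ${\cal B}(A)$ — engineering Boolean elements $e_i$ that are pairwise "co-maximal" ($e_i\vee e_j=1$) with meet $0$ while keeping $e_i\in[x_i)$ — and getting the inequalities to line up; everything else is a fairly mechanical transcription through Corollary \ref{alnostru}, Lemma \ref{latrez}, and Lemma \ref{cbool}.
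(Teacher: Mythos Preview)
Your cycle $(\ref{propblp1})\Leftrightarrow(\ref{propblp2})\Leftrightarrow(\ref{propblp3})$ is fine and matches the paper's route (the paper cites the equivalence $(\ref{propblp1})\Leftrightarrow(\ref{propblp2})$ from \cite{ggcm} and does $(\ref{propblp2})\Leftrightarrow(\ref{propblp3})$ exactly as you do). Your unpacking via Corollary~\ref{alnostru} works: from $d(x,e)\in[x)\cap[\neg\,x)$ one gets $x\to e\in[x)$, hence $e\geq x\odot(x\to e)\in[x)$; and from $e\to x\in[\neg\,x)$ one gets $\neg\,e\geq (e\to x)\odot\neg\,x\in[\neg\,x)$. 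The converse direction is similar, using $e\to a=\neg\,e\vee a$ for Boolean $e$.

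The genuine gap is in $(\ref{propblp3})\Rightarrow(\ref{propblp4})$. Your direct approach produces $f_i\in{\cal B}(A)$ with $f_i\in[x_i)$ and $\neg\,f_i\in[y_i)$, but the claimed inequality $\bigwedge_i\neg\,f_i\le\bigwedge_i y_i$ is in the wrong direction (what you actually have is $y_i^{m_i}\le\neg\,f_i$), and there is no evident Boolean-algebra manipulation that turns the $f_i$ into a system with $e_i\vee e_j=1$ and $\bigwedge_i e_i=0$ while preserving $e_i\in[x_i)$, because you have no control over the relations among the $f_i$ inside ${\cal B}(A)$. As for the inductive approach you mention in passing: a naive induction stalls, since after splitting $(x_1\odot\cdots\odot x_n)\odot x_{n+1}=0$ via $(\ref{propblp3})$ to get $f\in{\cal B}(A)$ with $f\in[x_1\odot\cdots\odot x_n)$ and $\neg\,f\in[x_{n+1})$, the elements $x_1,\ldots,x_n$ no longer multiply to $0$, so the induction hypothesis does not apply in $A$.

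The paper's key idea, which you are missing, is to pass to the residuated lattice $[f)$ (with first element $f$), observe that $(x_1^k\vee f)\odot\cdots\odot(x_n^k\vee f)=f$ there, and apply the induction hypothesis \emph{inside $[f)$}. For this one needs that $[f)$ again has BLP; this is Corollary~\ref{corggcm}, a non-obvious inheritance result imported from \cite{eu3} and \cite{ggcm}. The resulting $e_1,\ldots,e_n\in{\cal B}([f))\subseteq{\cal B}(A)$ together with $e_{n+1}=\neg\,f$ then satisfy all the required conditions. Note also that the paper phrases this step as $(\ref{propblp1}),(\ref{propblp3})\Rightarrow(\ref{propblp4})$, precisely because invoking Corollary~\ref{corggcm} requires knowing that $A$ has BLP, not just condition $(\ref{propblp3})$.
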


\begin{proof} The equivalence (\ref{propblp1})$\Leftrightarrow $(\ref{propblp2}) is \cite[Proposition $4.6$]{ggcm}, but also follows immediately from Corollary \ref{alnostru} above.

\noindent (\ref{propblp2})$\Rightarrow $(\ref{propblp3}): Let $x,y\in A$ such that $x\odot y=0$, that is $y\leq \neg \, x$ by Lemma \ref{latrez}, (\ref{latrez1}), hence $[\neg \, x)\subseteq [y)$ by the form of a principal filter. The hypothesis of this implication then shows that there exists $e\in {\cal B}(A)$ such that $e\in [x)$ and $\neg \, e\in [\neg \, x)\subseteq [y)$.

\noindent (\ref{propblp3})$\Rightarrow $(\ref{propblp2}): By Lemma \ref{latrez}, (\ref{latrez1}).

\noindent (\ref{propblp1}),(\ref{propblp3})$\Rightarrow $(\ref{propblp4}): We proceed by induction on $n$. For $n=2$, (\ref{propblp4}) is exactly (\ref{propblp3}). Now let us assume that the statement in (\ref{propblp4}) is true for an $n\in \N $, $n\geq 2$, $n$ arbitrary but fixed. Let $x_1,\ldots ,x_n,x_{n+1}\in A$ such that $x_1\odot \ldots \odot x_n\odot x_{n+1}=0$. The hypoythesis (\ref{propblp3}) shows that then there exists an $f\in {\cal B}(A)$ such that $f\in [x_1\odot \ldots \odot x_n)$ and $\neg \, f\in [x_{n+1})$. So there exists $k\in \N ^*$ such that $x_1^k\odot \ldots \odot x_n^k\leq f$ and $x_{n+1}^k\leq \neg \, f$ (see the form of a principal filter and Lemma \ref{latrez}, (\ref{latrez6})). Let us consider the elements $x_1^k\vee f,\ldots ,x_n^k\vee f\in [f)$. Lemma \ref{latrez}, (\ref{latrez9}), ensures us that $\displaystyle (x_1^k\vee f)\odot \ldots \odot (x_n^k\vee f)=\bigodot _{i=1}^n(x_i^k\vee f)=(\bigodot _{i=1}^nx_i^k)\vee f=f$. By the hypothesis (\ref{propblp1}) and Corollary \ref{corggcm}, the residuated lattice $[f)$ has BLP, so, by the induction hypothesis, it follows that there exist $e_1,\ldots ,e_n\in {\cal B}([f))$ such that $\displaystyle \bigwedge _{i=1}^ne_i=f$, $e_i\vee e_j=1$ for all $i,j\in \overline{1,n}$ with $i\neq j$, and $e_i\in [x_i^k\vee f)$ for all $i\in \overline{1,n}$. Let $e_{n+1}=\neg \, f\in {\cal B}(A)$ (see Lemma \ref{cbool}, (\ref{cbool0})). Then, according to Lemma \ref{latrez}, (\ref{latrez8}), $e_1\odot \ldots \odot e_n\odot e_{n+1}\leq (e_1\wedge \ldots \wedge e_n)\odot e_{n+1}=f\wedge \neg \, f=0$, thus $e_1\odot \ldots \odot e_n\odot e_{n+1}=0$. Also, for all $i\in \overline{1,n}$, $e_i\in [x_i^k\vee f)$, hence there exists an $m\in \N ^*$ such that, for all $i\in \overline{1,n}$, $(x_i^k\vee f)^m\leq e_i$ (see the form of a principal filter and Lemma \ref{latrez}, (\ref{latrez6})), so $x_i^{km}\leq e_i$, thus $e_i\in [x_i)$. Also, $e_{n+1}=\neg \, f\in [x_{n+1})$ and, for all $i\in \overline{1,n}$, $e_i\vee e_{n+1}\geq f\vee \neg \, f=1$ by Lemma \ref{cbool}, (\ref{cbool1}), so $e_i\vee e_{n+1}=1$.

\noindent (\ref{propblp4})$\Rightarrow $(\ref{propblp3}): Trivial.\end{proof}

\begin{remark}{\rm \cite{ggcm}}\begin{enumerate}
\item\label{celeloc1} Any local residuated lattice has BLP.
\item\label{celeloc2} Any linearly orderred residuated lattice is local.
\item\label{celeloc3} Consequently, any linearly orderred residuated lattice has BLP.
\item\label{celeloc4} Any prime filter of a residuated lattice has BLP.\end{enumerate}\label{celeloc}\end{remark}

\begin{remark}
(\ref{celeloc4}) from Remark \ref{celeloc} provides us with another way of proving that all residuated chains have BLP, by simply using the fact that all filters of a residuated chain are prime, a fact that follows immediately from the definition of a prime filter. And, just for the fun of the argument, there is an even simpler way of showing that all residuated chains have BLP: assume that $A$ is a chain and take $a\in A$. Then $a\odot \neg \, a=0$, according to Lemma \ref{latrez}, (\ref{latrez7}), and either $a\leq \neg \, a$ or $\neg \, a\leq a$, thus Lemma \ref{latrez}, (\ref{latrez6}), gives us $a^2=0$ or $(\neg \, a)^2=0$, hence $[a)=A$ or $[\neg \, a)=A$. By Lemma \ref{latrez}, (\ref{latrez5}), we have: if $[a)=A$, then $0\in [a)$ and, of course, $\neg \, 0=1\in [\neg \, a)$; if $[\neg \, a)=A$, then $\neg \, 1=0\in [\neg \, a)$ and, of course, $1\in [a)$. The fact that $0,1\in {\cal B}(A)$ and Proposition \ref{propblp} now show that the residuated chain $A$ has BLP.\end{remark}

\begin{proposition}
If $A$ is linearly orderred, then any principal filter generated by an idempotent element of $A$ has ILP.\label{pfiltidemp}\end{proposition}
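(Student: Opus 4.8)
The plan is to check the defining property of ILP directly for the filter $F=[e)$, where $e\in {\cal I}(A)$; recall that $[e)=\{x\in A\ |\ e\leq x\}$ since $e$ is idempotent, and that, by Remark \ref{cealincl}, (\ref{cealincl2}), only the inclusion ${\cal I}(A/F)\subseteq {\cal I}(A)/F$ has to be verified. So I would fix an arbitrary $a\in A$ with $a/F\in {\cal I}(A/F)$ and exhibit an idempotent element $f\in A$ with $f/F=a/F$. The condition $a/F\in {\cal I}(A/F)$ unfolds to $a^2/F=a/F$, that is $d(a^2,a)\in [e)$, that is $e\leq d(a^2,a)$.

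Since $A$ is linearly ordered, either $e\leq a$ or $a\leq e$. If $e\leq a$, then $a\in F$, so $a/F=1/F$ and $f=1$ does the job, using no hypothesis. The substantive case is $a\leq e$. Here I first simplify $d(a^2,a)$: by Lemma \ref{latrez}, (\ref{latrez6}), $a^2\leq a$, so Lemma \ref{latrez}, (\ref{latrez5}), gives $a^2\rightarrow a=1$ and hence $d(a^2,a)=a\rightarrow a^2$; the hypothesis $e\leq d(a^2,a)$ therefore says $e\leq a\rightarrow a^2$, equivalently $e\odot a\leq a^2$ by the law of residuation. On the other hand, from $a\leq e$ and $a\leq a$ we get $a^2=a\odot a\leq e\odot a$ by Lemma \ref{latrez}, (\ref{latrez6}). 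Thus $e\odot a=a^2$.

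From $e\odot a=a^2$ and $e^2=e$ it follows, by commutativity and associativity of $\odot $, that $e\odot a^2=(e\odot e)\odot a=e\odot a=a^2$, and then $(a^2)^2=(e\odot a)\odot (e\odot a)=(e\odot e)\odot (a\odot a)=e\odot a^2=a^2$. Hence $a^2\in {\cal I}(A)$, while $a^2/F=a/F$ holds by the assumption on $a$; so $f=a^2$ is the required idempotent lift, and $F=[e)$ has ILP.

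I do not anticipate a real obstacle. The only delicate point is the short computation in the last paragraph showing that, once $a\leq e$, the hypothesis actually forces $a^2$ to be idempotent (via $e\odot a=a^2$, then squaring); and it is precisely the dichotomy ``$e\leq a$ or $a\leq e$'' that uses linearity --- without it $a^2$ need not be idempotent, so the restriction to chains is essential.
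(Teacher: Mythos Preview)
Your proof is correct and follows essentially the same approach as the paper's own proof: split into the two cases $e\leq a$ and $a\leq e$, dispose of the first trivially, and in the second use the residuation to get $e\odot a=a^2$ and then show $a^2$ is idempotent. The only cosmetic difference is that the paper computes $(a^2)^2=e^2\odot a^2=e^2\odot e\odot a=e^3\odot a=e\odot a=a^2$ in one string, whereas you first isolate $e\odot a^2=a^2$ and then square; the content is identical.
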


\begin{proof} Assume that $A$ is a chain. Let $a\in {\cal I}(A)$ and $F=[a)$. Consider an $x\in A$ such that $x/F\in {\cal I}(A/F)$, which means that $x/F=(x/F)^2=x^2/F$, that is $x\leftrightarrow x^2\in F$, so that $x\rightarrow x^2\in F$ by Lemma \ref{latrez}, (\ref{latrez6}) and (\ref{latrez5}). Thus $x\rightarrow x^2\in [a)$, so $a\leq x\rightarrow x^2$, which is equivalent to $a\odot x\leq x^2$. Since $A$ is a chain, we have either $a\leq x$ or $x\leq a$. If $a\leq x$, then $x\in [a)=F$, so $x/F=1/F\in {\cal I}(A)/F$ since $1\in {\cal I}(A)$. If $x\leq a$, then $x^2\leq a\odot x$ by Lemma \ref{latrez}, (\ref{latrez6}). So, in this case, $x^2\leq a\odot x\leq x^2$, that is $x^2=a\odot x$. Then $x^4=(a\odot x)^2=a^2\odot x^2=a^2\odot a\odot x=a^3\odot x=a\odot x=x^2$, because $a\in {\cal I}(A)$. Thus $x^2=x^4=(x^2)^2$, which means that $x^2\in {\cal I}(A)$. Hence $x/F=x^2/F\in {\cal I}(A)/F$. Therefore ${\cal I}(A/F)\subseteq {\cal I}(A)/F$, thus $F$ has ILP according to Remark \ref{cealincl}, (\ref{cealincl2}).\end{proof}

\begin{remark}
If $A$ is finite, then ${\rm Filt}(A)={\rm PFilt}(A)=\{[a)\ |\ a\in {\cal I}(A)\}$. This is obvious, because all filters of a finite residuated lattice are finite and thus principal, while Lemma \ref{latrez}, (\ref{latrez6}), shows that, for every $a\in A$, the sequence $a,a^2,a^3,\ldots ,a^n,\ldots $ is decreasing, thus it is stationery in the case when $A$ is finite, so in this case there exists an $n\in \N ^*$ such that $a^n\in {\cal I}(A)$, which shows that the principal filter $[a)=[a^n)$ has an idempotent generator.\label{filtlrfin}\end{remark}

\begin{corollary}
Any finite residuated chain has ILP.\end{corollary}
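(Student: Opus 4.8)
The plan is to simply combine the two immediately preceding results. Let $A$ be a finite residuated chain. Recall from Example \ref{residex} (and the general Remark on $\varphi$--LP being equivalent to $(\varphi,\theta)$--LP for all congruences, transported to filters) that $A$ has ILP if and only if every filter of $A$ has ILP.

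First I would invoke Remark \ref{filtlrfin}: since $A$ is finite, we have ${\rm Filt}(A)={\rm PFilt}(A)=\{[a)\ |\ a\in {\cal I}(A)\}$, so every filter of $A$ is a principal filter generated by an idempotent element. Then, since $A$ is in particular linearly orderred, Proposition \ref{pfiltidemp} applies and tells us that each such filter $[a)$, with $a\in{\cal I}(A)$, has ILP. Therefore every filter of $A$ has ILP, and hence $A$ has ILP.

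There is essentially no obstacle here: the statement is a one-line corollary of Proposition \ref{pfiltidemp} together with the structural description of the filter lattice of a finite residuated lattice in Remark \ref{filtlrfin}. The only point worth stating explicitly in the write-up is the reduction ``$A$ has ILP iff every filter of $A$ has ILP'', which is part of the definitional machinery from Section \ref{lpsunivalg}; all the genuine work was done in proving Proposition \ref{pfiltidemp}.
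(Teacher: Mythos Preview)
Your proposal is correct and follows exactly the paper's own proof, which simply cites Proposition \ref{pfiltidemp} and Remark \ref{filtlrfin}. The only difference is that you spell out the routine reduction ``$A$ has ILP iff every filter of $A$ has ILP,'' which the paper leaves implicit.
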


\begin{proof} By Proposition \ref{pfiltidemp} and Remark \ref{filtlrfin}.\end{proof}

\begin{remark}
If $F$ is a filter of $A$ such that there exist $\min (F)\in A$, then $\min (F)\in {\cal I}(A)$ and $F=[\min (F))$. Indeed, if $a=\min (F)\in A$, then the fact that $F=[a)$ follows by double inclusion. Moreover, we get that $F=[a)=\{x\in A\ |\ a\leq x\}$, which clearly shows that $a$ is an idempotent element of $A$, because the fact that $a^2\in F$ implies $a\leq a^2$, which in turn implies $a=a^2$ by Lemma \ref{latrez}, (\ref{latrez6}).\label{filtcumin}\end{remark}

\begin{corollary}
Any well--orderred residuated lattice has ILP.\label{lrbineord}\end{corollary}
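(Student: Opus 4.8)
The plan is to combine Proposition \ref{pfiltidemp} with Remark \ref{filtcumin}, after observing that a well--orderred residuated lattice is, in particular, a residuated chain. Indeed, a well--ordering is by definition a total order in which every non--empty subset has a least element, so the underlying bounded lattice of a well--orderred residuated lattice $A$ is linearly orderred; hence Proposition \ref{pfiltidemp} is applicable to $A$.

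Next I would fix an arbitrary filter $F$ of $A$ and show it has ILP. Since $1\in F$, the set $F$ is non--empty, so the well--ordering hypothesis guarantees the existence of $\min (F)\in A$. By Remark \ref{filtcumin}, this forces $\min (F)\in {\cal I}(A)$ and $F=[\min (F))$, i.e. $F$ is a principal filter generated by an idempotent element of $A$. Proposition \ref{pfiltidemp} then yields that $F$ has ILP.

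Finally, since $F$ was an arbitrary filter of $A$, every filter of $A$ has ILP, which by definition (see Example \ref{residex} and Remark \ref{cealincl}, (\ref{cealincl2})) means precisely that $A$ has ILP. There is essentially no obstacle here beyond checking that all the hypotheses of Proposition \ref{pfiltidemp} and Remark \ref{filtcumin} are met; the only point worth a sentence is the reduction "well--orderred $\Rightarrow$ linearly orderred", which is immediate from the definition of a well--order.
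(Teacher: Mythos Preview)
Your proof is correct and follows essentially the same approach as the paper: both arguments observe that a well--orderred residuated lattice is a chain, invoke Remark \ref{filtcumin} to see that every filter is principal with an idempotent generator, and then apply Proposition \ref{pfiltidemp}. Your version is slightly more explicit in justifying non--emptiness of $F$ and the reduction from well--orderred to linearly orderred, but the logical structure is identical.
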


\begin{proof} Assume that $A$ is well orderred and let $F$ be an arbitrary filter of $A$. Then $F$ is non--empty, hence $F$ has a minimum in $A$, thus $F$ is a principal filter generated by an idempotent element of $A$, as Remark \ref{filtcumin} ensures us. It also follows that $A$ is a chain. Hence $F$ has ILP by Proposition \ref{pfiltidemp}, therefore $A$ has ILP.\end{proof}

\begin{example} The properties BLP and ILP do not coincide in every residuated lattice. Take, for instance, this example of residuated lattice from \cite{ior}: $A=\{0,a,b,c,1\}$, with the following Hasse diagram and table for the implication, and having $\odot =\wedge $:

\begin{center}
\begin{tabular}{cc}
\begin{picture}(70,85)(0,0)
\put(30,10){\line(-1,1){20}}
\put(30,10){\line(1,1){20}}
\put(30,50){\line(-1,-1){20}}
\put(30,50){\line(1,-1){20}}
\put(30,50){\line(0,1){20}}
\put(30,10){\circle*{3}}
\put(10,30){\circle*{3}}
\put(50,30){\circle*{3}}
\put(30,50){\circle*{3}}
\put(30,70){\circle*{3}}
\put(28,1){$0$}
\put(3,27){$a$}
\put(52,27){$b$}
\put(33,49){$c$}
\put(28,73){$1$}
\end{picture}
&\hspace*{15pt}
\begin{picture}(70,85)(0,0)
\put(0,39){\begin{tabular}{c|ccccc}
$\rightarrow $ & $0$ & $a$ & $b$ & $c$ & $1$ \\ \hline
$0$ & $1$ & $1$ & $1$ & $1$ & $1$ \\
$a$ & $b$ & $1$ & $b$ & $1$ & $1$ \\
$b$ & $a$ & $a$ & $1$ & $1$ & $1$ \\
$c$ & $0$ & $a$ & $b$ & $1$ & $1$ \\
$1$ & $0$ & $a$ & $b$ & $c$ & $1$
\end{tabular}}
\end{picture}
\end{tabular}
\end{center}

As shown in \cite[Example $3.11$]{ggcm}, $A$ does not satisfy BLP, because ${\rm Rad}(A)=[c)=\{c,1\}$ does not have BLP. Since $\odot =\wedge $, it follows that ${\cal I}(A)=A$, hence $A$ has ILP, as shown by Example \ref{residex} or Corollary \ref{cormeproud}, (\ref{cormeproud2}).\label{exlpdif}\end{example}

\begin{example} Here is an example of a residuated lattice that is neither local, not even a chain, nor a Boolean algebra, in fact not even distributive, and it is neither a ${\rm G\ddot{o}del}$ algebra (nor an involutive residuated lattice, for that matter), but it has BLP and ILP (see Remark \ref{celeloc} and Corollary \ref{cormeproud} above).

This example of residuated lattice appears in \cite{gal}, \cite{ior}, \cite{kow}. It has $A=\{0,a,b,c,d,1\}$ as underlying set and the following Hasse diagram and operations $\odot $ and $\rightarrow $: 

\begin{center}
\begin{tabular}{ccc}

\begin{picture}(100,70)(0,0)
\put(37,11){\circle*{3}}
\put(35,0){$0$}
\put(37,11){\line(1,1){15}}
\put(52,26){\circle*{3}}
\put(55,23){$d$}
\put(52,26){\line(0,1){22}}
\put(52,48){\circle*{3}}
\put(55,46){$c$}
\put(52,48){\line(-1,1){15}}
\put(37,63){\circle*{3}}
\put(41,63){$a$}
\put(37,11){\line(-1,1){26}}
\put(11,37){\circle*{3}}
\put(3,34){$b$}
\put(11,37){\line(1,1){26}}
\put(37,63){\line(0,1){20}}
\put(37,83){\circle*{3}}
\put(35,85){$1$}
\end{picture}
&
\begin{picture}(140,100)(0,0)
\put(0,45){
\begin{tabular}{c|cccccc}
$\odot $ & $0$ & $a$ & $b$ & $c$ & $d$ & $1$ \\ \hline
$0$ & $0$ & $0$ & $0$ & $0$ & $0$ & $0$ \\
$a$ & $0$ & $a$ & $b$ & $d$ & $d$ & $a$ \\
$b$ & $0$ & $b$ & $b$ & $0$ & $0$ & $b$ \\
$c$ & $0$ & $d$ & $0$ & $d$ & $d$ & $c$ \\
$d$ & $0$ & $d$ & $0$ & $d$ & $d$ & $d$ \\
$1$ & $0$ & $a$ & $b$ & $c$ & $d$ & $1$
\end{tabular}}
\end{picture}
&
\begin{picture}(120,100)(0,0)
\put(0,45){
\begin{tabular}{c|cccccc}

$\rightarrow $ & $0$ & $a$ & $b$ & $c$ & $d$ & $1$ \\ \hline
$0$ & $1$ & $1$ & $1$ & $1$ & $1$ & $1$ \\
$a$ & $0$ & $1$ & $b$ & $c$ & $c$ & $1$ \\
$b$ & $c$ & $1$ & $1$ & $c$ & $c$ & $1$ \\
$c$ & $b$ & $1$ & $b$ & $1$ & $a$ & $1$ \\
$d$ & $b$ & $1$ & $b$ & $1$ & $1$ & $1$ \\
$1$ & $0$ & $a$ & $b$ & $c$ & $d$ & $1$
\end{tabular}}
\end{picture}
\end{tabular}
\end{center}

$A/[a)=\{0/[a),c/[a)=d/[a),b/[a),a/[a)=1/[a)\}$ is the four--element Boolean algebra (the lozenge), hence ${\cal B}(A/[a))=A/[a)$, therefore every filter of $A$ which includes $[a)$ (that is every non--trivial filter of $A$) has BLP and ILP, as shown by Proposition \ref{propmeproud}, (\ref{propmeproud1}). $[1)$ has BLP and ILP. Therefore $A$ has BLP and ILP.

To show that $A$ has neither of the structures listed above, notice that: ${\rm Max}(A)=\{[b),[d)\}$, ${\cal B}(A)=\{0,1\}$, ${\cal I}(A)=A\setminus \{c\}$ and ${\rm Reg}(A)=A\setminus \{a\}$.\label{nice}\end{example}

\begin{example}
If $A$ is an MV--algebra, then $A$ can be organized as a residuated lattice having ${\cal B}(A)={\cal I}(A)$ (\cite{ior}). Evidently, this also holds for its quotient algebras. Thus:

\begin{itemize}
\item $A$ has BLP iff $A$ has ILP;
\item moreover, for any filter $F$ of $A$, we have: $F$ has BLP iff $F$ has ILP.\end{itemize}\label{mvalgle}\end{example}

\begin{example}
Here are three classical infinite examples: the residuated lattices induced by the three fundamental t--norms have BLP and ILP. Indeed, all three of them are chains, hence they all have BLP, according to Remark \ref{celeloc}, (\ref{celeloc3}).
 
The residuated lattice induced by the \L ukasiewicz t--norm is an MV--algebra, hence, since it has BLP, it also has ILP, as Example \ref{mvalgle} ensures us.

The residuated lattice induced by the ${\rm G\ddot{o}del}$ t--norm is a ${\rm G\ddot{o}del}$ algebra, hence it has ILP by Corollary \ref{cormeproud}, (\ref{cormeproud2}).

Now let $P$ be the residuated lattice induced by the product t--norm. It is clear that, for every $x\in P\setminus \{0,1\}$, $[x)=(0,1]=P\setminus \{0\}$ (the real interval $(0,1]$), and, moreover, for any $S\subseteq P\setminus \{0\}$ such that $S\neq \emptyset $ and $S\neq 1$, $[S)=(0,1]=P\setminus \{0\}$. Therefore, ${\rm Filt}(P)=\{P,P\setminus \{0\},[1)\}$. The filters $P$ and $[1)$ have ILP. As for the filter $P\setminus \{0\}$, clearly, $P/(P\setminus \{0\})=\{0/(P\setminus \{0\}),1/(P\setminus \{0\})\}$, hence $P\setminus \{0\}$ has ILP by Corollary \ref{corutil}. Thus $P$ has ILP.\end{example}

\section{Spectra and the Reticulation}
\label{specretic}

In this section we present notions and properties related to the prime and maximal spectrum of a residuated lattice. By using the reticulation functor from \cite{eu3}, \cite{eu1}, \cite{eu}, \cite{eu2}, \cite{eu4}, \cite{eu5}, \cite{eu7}, we establish the link between BLP in residuated lattices and BLP in their reticulations.

The first results in this section, which we present without their proofs, are known and straightforward. For a further study of these results, we refer the reader to \cite{bur}, \cite{lciu}, \cite{gal}, \cite{eu3}, \cite{haj}, \cite{kow}, \cite{eu1}, \cite{eu}, \cite{eu2}, \cite{eu4}, \cite{eu5}, \cite{eu7}, \cite{pic}. For the variants of these results in the case of BL--algebras, \cite{leo} can be consulted, while, for their versions in the case of MV--algebras, \cite{cdom} and \cite{ile} should be studied.

We shall be keeping the following notations, for any residuated lattice $A$, every $a\in A$ and each $F\in {\rm Filt}(A)$:

\begin{center}
\begin{tabular}{ll}
$V(F)=\{P\in {\rm Spec}(A)\ |\ F\subseteq P\}$ & $D(F)=\{P\in {\rm Spec}(A)\ |\ F\nsubseteq P\}$\\ 
$V(a)=\{P\in {\rm Spec}(A)\ |\ a\in P\}$ & $D(a)=\{P\in {\rm Spec}(A)\ |\ a\notin P\}$\\ 
$v(F)=V(F)\cap {\rm Max}(A)=\{P\in {\rm Max}(A)\ |\ F\subseteq P\}$ & $d(F)=D(F)\cap {\rm Max}(A)=\{P\in {\rm Max}(A)\ |\ F\nsubseteq P\}$\\ 
$v(a)=V(a)\cap {\rm Max}(A)=\{P\in {\rm Max}(A)\ |\ a\in P\}$ & $d(a)=D(a)\cap {\rm Max}(A)=\{P\in {\rm Max}(A)\ |\ a\notin P\}$
\end{tabular}\end{center}

Everywhere in the rest of this section, $A$ will be an arbitrary residuated lattice, unless mentioned otherwise.

For all $F,G\in {\rm Filt}(A)$ and all $a,b\in A$:

\begin{itemize}
\item $D(F)={\rm Spec}(A)\setminus V(F)$, $d(F)={\rm Max}(A)\setminus v(F)$, $D(a)={\rm Spec}(A)\setminus V(a)$, $d(a)={\rm Max}(A)\setminus v(a)$;
\item $V([a))=V(a)$, $v([a))=v(a)$, $D([a))=D(a)$, $d([a))=d(a)$;
\item $V(F)={\rm Spec}(A)$ iff $F=\{1\}$; $V(F)=\emptyset $ iff $F=A$;
\item $V(a)={\rm Spec}(A)$ iff $a=1$; $V(a)=\emptyset $ iff $a$ is nilpotent;
\item $D(F)={\rm Spec}(A)$ iff $F=A$; $D(F)=\emptyset $ iff $F=\{1\}$;
\item $D(a)={\rm Spec}(A)$ iff $a$ is nilpotent; $D(a)=\emptyset $ iff $a=1$;
\item $v(F)={\rm Max}(A)$ iff $F\subseteq {\rm Rad}(A)$; $v(F)=\emptyset $ iff $F=A$;
\item $v(a)={\rm Max}(A)$ iff $a\in {\rm Rad}(A)$; $v(a)=\emptyset $ iff $a$ is nilpotent;
\item $d(F)={\rm Max}(A)$ iff $F=A$; $d(F)=\emptyset $ iff $F\subseteq {\rm Rad}(A)$;
\item $d(a)={\rm Max}(A)$ iff $a$ is nilpotent; $d(a)=\emptyset $ iff $a\in {\rm Rad}(A)$;
\item if $a\in F$, then: $V(F)\subseteq V(a)$, $v(F)\subseteq v(a)$, $D(a)\subseteq D(F)$, $d(a)\subseteq d(F)$;
\item if $F\subseteq G$, then: $V(G)\subseteq V(F)$, $v(G)\subseteq v(F)$, $D(F)\subseteq D(G)$, $d(F)\subseteq d(G)$;
\item if $a\leq b$, then: $V(a)\subseteq V(b)$, $v(a)\subseteq v(b)$, $D(b)\subseteq D(a)$, $d(b)\subseteq d(a)$.\end{itemize}

\begin{proposition}
Let $F$ and $G$ be filters of $A$ and $(F_i)_{i\in I}$ be a non--empty family of filters of $A$. Then:

\begin{enumerate}
\item\label{vsid2} $V(F\cap G)=V(F)\cup V(G)$, $D(F\cap G)=D(F)\cap D(G)$, $v(F\cap G)=v(F)\cup v(G)$, $d(F\cap G)=d(F)\cap d(G)$;
\item\label{vsid1} $\displaystyle V(\bigvee _{i\in I}F_i)=\bigcap _{i\in I}V(F_i)$, $\displaystyle D(\bigvee _{i\in I}F_i)=\bigcup _{i\in I}D(F_i)$, $\displaystyle v(\bigvee _{i\in I}F_i)=\bigcap _{i\in I}v(F_i)$, $\displaystyle d(\bigvee _{i\in I}F_i)=\bigcup _{i\in I}d(F_i)$.\end{enumerate}\label{vsid}\end{proposition}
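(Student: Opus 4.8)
The plan is to prove each of the eight set-theoretic identities in Proposition~\ref{vsid} directly from the definitions of $V$, $D$, $v$, $d$ together with the basic properties of the filter lattice $({\rm Filt}(A),\vee ,\cap ,\{1\},A)$ and of prime filters that were recalled in Section~\ref{preliminaries}. The key observation is that the $D$- and $d$-statements are formal consequences of the $V$- and $v$-statements, since $D(H)={\rm Spec}(A)\setminus V(H)$ and $d(H)={\rm Max}(A)\setminus v(H)$ for every filter $H$, and complementation turns unions into intersections and vice versa; likewise the $v$- and $d$-statements follow from the corresponding $V$- and $D$-statements by intersecting everything with ${\rm Max}(A)$ (using ${\rm Max}(A)\se {\rm Spec}(A)$). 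So it suffices to establish $V(F\cap G)=V(F)\cup V(G)$ in part (\ref{vsid2}) and $V(\bigvee _{i\in I}F_i)=\bigcap _{i\in I}V(F_i)$ in part (\ref{vsid1}), and then state that the remaining identities follow by these two formal reductions.

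First I would do part (\ref{vsid1}), which is the easier half. For $P\in {\rm Spec}(A)$ we have $P\in V(\bigvee _{i\in I}F_i)$ iff $\bigvee _{i\in I}F_i\se P$, and since $\bigvee _{i\in I}F_i=[\bigcup _{i\in I}F_i)$ is the smallest filter containing every $F_i$, the condition $\bigvee _{i\in I}F_i\se P$ is equivalent to $F_i\se P$ for every $i\in I$ (one direction uses $F_i\se \bigvee _{i\in I}F_i$; the other uses that $P$ is itself a filter containing all the $F_i$, hence contains the filter they generate). That is exactly $P\in \bigcap _{i\in I}V(F_i)$. Note this argument uses no primeness at all, so it is the cleanest part; the fact that the family is non-empty is only needed so the intersection on the right is taken over a non-empty index set.

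Next I would do part (\ref{vsid2}), the equality $V(F\cap G)=V(F)\cup V(G)$, which is where primeness enters and which I expect to be the main (though still routine) obstacle. The inclusion $\supseteq $ is immediate from monotonicity: $F\cap G\se F$ and $F\cap G\se G$ give $V(F)\se V(F\cap G)$ and $V(G)\se V(F\cap G)$. For $\se $, take $P\in V(F\cap G)$, i.e.\ $F\cap G\se P$, and suppose for contradiction that $P\notin V(F)$ and $P\notin V(G)$; then there are $x\in F\setminus P$ and $y\in G\setminus P$. Since $x\le x\vee y$ and $F$ is a filter we get $x\vee y\in F$, and similarly $x\vee y\in G$, so $x\vee y\in F\cap G\se P$. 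But $P$ is prime, so $x\in P$ or $y\in P$, contradicting the choice of $x,y$. Hence $P\in V(F)\cup V(G)$. This handles part (\ref{vsid2}) for $V$, and the $D$-, $v$-, $d$-versions in both parts then follow by the complementation/restriction reductions noted above; I would spell those out in one or two lines rather than repeating the argument four times. The only subtlety worth flagging is to make sure that when restricting to ${\rm Max}(A)$ one really does have ${\rm Max}(A)\se {\rm Spec}(A)$ (recalled in Section~\ref{preliminaries}) so that a maximal filter containing $F\cap G$ is in particular a prime filter to which the above argument applies.
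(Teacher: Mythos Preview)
Your argument is correct. Note, however, that the paper does not actually supply a proof of this proposition: at the start of Section~\ref{specretic} the authors write that ``the first results in this section, which we present without their proofs, are known and straightforward,'' and Proposition~\ref{vsid} is among those results. Your write-up is exactly the standard direct verification one expects here --- the primeness argument for $V(F\cap G)=V(F)\cup V(G)$, the filter-generation argument for $V(\bigvee_i F_i)=\bigcap_i V(F_i)$, and the complementation/restriction reductions for the remaining six identities --- so there is nothing to compare against and nothing to correct.
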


\begin{proposition}
Let $a,b\in A$. Then:

\begin{enumerate}
\item\label{vsidelem2} $V(a\vee b)=V(a)\cup V(b)$, $D(a\vee b)=D(a)\cap D(b)$, $v(a\vee b)=v(a)\cup v(b)$, $d(a\vee b)=d(a)\cap d(b)$;
\item\label{vsidelem1} $V(a\odot b)=V(a\wedge b)=V(a)\cap V(b)$, $D(a\odot b)=D(a\wedge b)=D(a)\cup D(b)$, $v(a\odot b)=v(a\wedge b)=v(a)\cap v(b)$, $d(a\odot b)=d(a\wedge b)=d(a)\cup d(b)$.\end{enumerate}\label{vsidelem}\end{proposition}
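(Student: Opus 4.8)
The plan is to reduce all eight identities to the defining property of a prime filter together with the elementary filter arithmetic recalled in Section \ref{preliminaries}. I would argue pointwise over $\mathrm{Spec}(A)$, obtain the $V$-identities first, then get the $D$-identities by complementation, and finally the $v$- and $d$-identities by intersecting with $\mathrm{Max}(A)$.

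For (\ref{vsidelem2}): fix $P\in\mathrm{Spec}(A)$. By the very definition of a prime filter, $a\vee b\in P$ iff $a\in P$ or $b\in P$; since this says precisely that $P\in V(a\vee b)$ iff $P\in V(a)\cup V(b)$, we get $V(a\vee b)=V(a)\cup V(b)$. Taking complements inside $\mathrm{Spec}(A)$ (using $D(x)=\mathrm{Spec}(A)\setminus V(x)$ and De Morgan) gives $D(a\vee b)=D(a)\cap D(b)$. Intersecting the first identity with $\mathrm{Max}(A)$ and using $v(x)=V(x)\cap\mathrm{Max}(A)$ yields $v(a\vee b)=v(a)\cup v(b)$, and likewise $d(a\vee b)=d(a)\cap d(b)$, since $d(x)=D(x)\cap\mathrm{Max}(A)$.

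For (\ref{vsidelem1}): I would invoke the equivalence recorded in Section \ref{preliminaries} that for any filter $F$ and any $x,y\in A$ one has $x,y\in F$ iff $x\odot y\in F$ iff $x\wedge y\in F$ (which rests on Lemma \ref{latrez}, (\ref{latrez8})). Applying this with $F=P$ prime gives, for every $P\in\mathrm{Spec}(A)$, that $a\odot b\in P$ iff ($a\in P$ and $b\in P$) iff $a\wedge b\in P$, i.e. $V(a\odot b)=V(a\wedge b)=V(a)\cap V(b)$. Complementation then yields $D(a\odot b)=D(a\wedge b)=D(a)\cup D(b)$, and restriction to $\mathrm{Max}(A)$ yields the corresponding $v$- and $d$-identities. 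As an alternative to the pointwise argument, one can deduce the whole proposition from Proposition \ref{vsid} applied to the two-element family $\{[a),[b)\}$, using $V([x))=V(x)$, $v([x))=v(x)$ and the identities $[a\vee b)=[a)\cap[b)$, $[a\odot b)=[a\wedge b)=[a)\vee[b)$ from Section \ref{preliminaries}.

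There is essentially no obstacle: the entire content is the definition of a prime filter plus the already-recorded facts about principal filters, and the only care needed is the routine bookkeeping of complements and of the passage from $\mathrm{Spec}(A)$ to $\mathrm{Max}(A)$.
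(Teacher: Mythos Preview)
Your argument is correct and is precisely the standard one. Note, however, that the paper does not actually supply a proof of this proposition: at the start of Section~\ref{specretic} the authors explicitly say that the first results there ``we present without their proofs, are known and straightforward,'' and Proposition~\ref{vsidelem} is among those. So there is nothing to compare against; your pointwise verification via the definition of a prime filter (for~(\ref{vsidelem2})) and the filter equivalence $a,b\in F\Leftrightarrow a\odot b\in F\Leftrightarrow a\wedge b\in F$ (for~(\ref{vsidelem1})), followed by complementation and restriction to ${\rm Max}(A)$, is exactly what the authors have in mind when they call the result straightforward. The alternative you mention---deducing everything from Proposition~\ref{vsid} together with the principal-filter identities $[a\vee b)=[a)\cap[b)$ and $[a\odot b)=[a\wedge b)=[a)\vee[b)$---is equally valid and arguably the cleanest route.
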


\begin{proposition}
For any filter $F$ of $A$, the following equalities hold: $\displaystyle V(F)=\bigcap _{a\in F}V(a)$, $\displaystyle v(F)=\bigcap _{a\in F}v(a)$, $\displaystyle D(F)=\bigcup _{a\in F}D(a)$, $\displaystyle d(F)=\bigcup _{a\in F}d(a)$.\label{guguloi}\end{proposition}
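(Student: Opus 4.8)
The plan is to reduce all four identities to the first one, $V(F)=\bigcap_{a\in F}V(a)$, since the remaining three are then obtained by purely set--theoretic manipulations.

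First I would establish $V(F)=\bigcap_{a\in F}V(a)$ by directly unfolding the definitions: for $P\in{\rm Spec}(A)$ one has $P\in\bigcap_{a\in F}V(a)$ iff $a\in P$ for every $a\in F$, which is exactly the condition $F\subseteq P$, i.e. $P\in V(F)$. (Alternatively, one may note that $F=\bigvee_{a\in F}[a)$ and combine the already recorded equality $V([a))=V(a)$ with Proposition \ref{vsid}, (\ref{vsid1}).)

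Next I would intersect both sides with ${\rm Max}(A)$. Since $F$ is non--empty (it contains $1$), intersection with ${\rm Max}(A)$ commutes with the indexed intersection, so $v(F)=V(F)\cap{\rm Max}(A)=\bigcap_{a\in F}\bigl(V(a)\cap{\rm Max}(A)\bigr)=\bigcap_{a\in F}v(a)$.

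Finally, for the two ``$D$/$d$'' formulas I would pass to complements inside ${\rm Spec}(A)$, respectively ${\rm Max}(A)$, using the bulleted identities $D(F)={\rm Spec}(A)\setminus V(F)$, $D(a)={\rm Spec}(A)\setminus V(a)$, $d(F)={\rm Max}(A)\setminus v(F)$, $d(a)={\rm Max}(A)\setminus v(a)$, together with De Morgan's law: $D(F)={\rm Spec}(A)\setminus\bigcap_{a\in F}V(a)=\bigcup_{a\in F}\bigl({\rm Spec}(A)\setminus V(a)\bigr)=\bigcup_{a\in F}D(a)$, and likewise $d(F)=\bigcup_{a\in F}d(a)$. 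There is essentially no obstacle in this argument; the only point to watch is the non--emptiness of $F$, which is what makes the distributivity of $\cap\,{\rm Max}(A)$ and of complementation over the index set $F$ legitimate.
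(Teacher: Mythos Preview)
Your argument is correct. Note, however, that the paper does not actually supply a proof of this proposition: it is listed among the ``known and straightforward'' results at the beginning of Section~\ref{specretic} that are explicitly presented without proof. So there is no paper proof to compare against; your write--up is precisely the routine verification the authors are omitting, and both the direct unfolding of $V(F)=\bigcap_{a\in F}V(a)$ and the alternative via $F=\bigvee_{a\in F}[a)$ together with Proposition~\ref{vsid},~(\ref{vsid1}) are valid. One small remark: the De~Morgan step for $D(F)$ does not in fact require $F\neq\emptyset$ (an empty intersection is the ambient space, whose complement is empty, matching the empty union), so the non--emptiness of $F$ is only genuinely used in the passage from $V$ to $v$.
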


Since each filter of $A$ is the intersection of the prime filters that include it, we have:

\begin{lemma}
For any $F,G\in {\rm Filt}(A)$ and any $a,b\in A$:

\begin{itemize}
\item $D(F)=D(G)$ iff $F=G$; $D(F)\subseteq D(G)$ iff $F\subseteq G$; $D(a)=D(b)$ iff $[a)=[b)$; $D(a)\subseteq D(b)$ iff $[a)\subseteq [b)$ iff $a\in [b)$;
\item $V(F)=V(G)$ iff $F=G$; $V(F)\subseteq V(G)$ iff $G\subseteq F$; $V(a)=V(b)$ iff $[a)=[b)$; $V(a)\subseteq V(b)$ iff $[b)\subseteq [a)$ iff $b\in [a)$.\end{itemize}\label{incld}\end{lemma}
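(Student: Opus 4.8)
The plan is to prove Lemma \ref{incld} by exploiting the single structural fact recalled just before its statement: every filter of $A$ equals the intersection of the prime filters that include it. From this fact the key translation is that, for $F\in{\rm Filt}(A)$, the set $V(F)$ of prime filters containing $F$ completely determines $F$, via $F=\bigcap V(F)$. I would organize the argument around proving the two ``$V$''-bullets first and then deduce the ``$D$''-bullets by complementation, since $D(F)={\rm Spec}(A)\setminus V(F)$ and $D(a)={\rm Spec}(A)\setminus V(a)$ are listed among the elementary properties above.

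First I would prove: $V(F)\subseteq V(G)$ iff $G\subseteq F$. The ``if'' direction is one of the monotonicity bullets already recorded ($F\subseteq G$ implies $V(G)\subseteq V(F)$), so only ``only if'' needs argument: if $V(F)\subseteq V(G)$, then every prime filter containing $F$ contains $G$, hence $G\subseteq\bigcap_{P\in V(F)}P=F$, where the last equality is exactly the recalled fact. Taking this together with its symmetric counterpart gives $V(F)=V(G)$ iff $F=G$. For the element versions, I would use $V([a))=V(a)$ (already listed) to reduce to the filter case: $V(a)\subseteq V(b)$ iff $V([a))\subseteq V([b))$ iff $[b)\subseteq[a)$, and $[b)\subseteq[a)$ is equivalent to $b\in[a)$ because $[a)$ is the smallest filter containing $a$ (so $b\in[a)$ forces $[b)\subseteq[a)$, and conversely $[b)\subseteq[a)$ gives $b\in[b)\subseteq[a)$). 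Likewise $V(a)=V(b)$ iff $[a)=[b)$.

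Then for the ``$D$'' bullets I would simply complement inside ${\rm Spec}(A)$: $D(F)\subseteq D(G)$ iff ${\rm Spec}(A)\setminus V(F)\subseteq{\rm Spec}(A)\setminus V(G)$ iff $V(G)\subseteq V(F)$ iff $F\subseteq G$ (by the first part); similarly $D(F)=D(G)$ iff $F=G$, and the element versions $D(a)\subseteq D(b)$ iff $[a)\subseteq[b)$ iff $a\in[b)$, and $D(a)=D(b)$ iff $[a)=[b)$, follow the same way using $D([a))=D(a)$.

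I do not expect a real obstacle here: the lemma is essentially a dictionary entry, and the only substantive ingredient is the ``intersection of prime filters'' fact, which is granted. The one point requiring a little care is making sure the equivalences are stated for the correct direction of inclusion — note the contravariance: $V$ reverses inclusions while $D$ preserves them — and remembering to pass through $V([a))=V(a)$, $D([a))=D(a)$ so that the element statements are genuine corollaries of the filter statements rather than requiring separate work.
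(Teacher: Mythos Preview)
Your proposal is correct and matches the paper's approach: the paper does not give a detailed proof of this lemma but simply prefaces it with ``Since each filter of $A$ is the intersection of the prime filters that include it, we have:'' --- exactly the single structural fact you identify and exploit. Your write-up is a faithful elaboration of that one-line justification, with the appropriate care about the contravariance of $V$ versus $D$ and the reduction of the element versions via $V([a))=V(a)$, $D([a))=D(a)$.
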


$\{D(F)\ |\ F\in {\rm Filt}(A)\}$ is a topology on ${\rm Spec}(A)$, called {\em the Stone topology (of ${\rm Spec}(A)$).} The Stone topology induces on ${\rm Max}(A)$ the topology $\{d(F)\ |\ F\in {\rm Filt}(A)\}$, called {\em the Stone topology of ${\rm Max}(A)$.} Throughout the rest of this paper, ${\rm Spec}(A)$ and ${\rm Max}(A)$ shall be considerred as topological spaces with the Stone topology. Trivially, the closed sets of the Stone topology of ${\rm Spec}(A)$ are $\{V(F)\ |\ F\in {\rm Filt}(A)\}$ and the closed sets of the Stone topology of ${\rm Max}(A)$ are $\{v(F)\ |\ F\in {\rm Filt}(A)\}$. Also, as shown by Proposition \ref{guguloi}, $\{D(a)\ |\ a\in A\}$ is a basis (of open sets) for the Stone topology of ${\rm Spec}(A)$ and $\{d(a)\ |\ a\in A\}$ is a basis (of open sets) for the Stone topology of ${\rm Max}(A)$.
 
\begin{lemma}
For every $e\in {\cal B}(A)$: $D(e)=V(\neg \, e)$, $d(e)=v(\neg \, e)$, $V(e)=D(\neg \, e)$, $v(e)=d(\neg \, e)$.\label{vsidbool}\end{lemma}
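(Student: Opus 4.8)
The plan is to reduce all four equalities to a single observation: when $e\in {\cal B}(A)$, a prime filter of $A$ contains exactly one of $e$ and $\neg \, e$. First I would fix $e\in {\cal B}(A)$ and an arbitrary $P\in {\rm Spec}(A)$. Since $e\vee \neg \, e=1$ by Lemma \ref{cbool}, (\ref{cbool1}), and $1\in P$, the primeness of $P$ forces $e\in P$ or $\neg \, e\in P$. On the other hand, $e\wedge \neg \, e=e\odot \neg \, e=0$ by Lemma \ref{cbool}, (\ref{cbool3}), together with Lemma \ref{latrez}, (\ref{latrez1}) (equivalently, $\neg \, e$ is the Boolean complement of $e$ in ${\cal B}(A)$ by Lemma \ref{cbool}, (\ref{cbool0})). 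Hence, if both $e$ and $\neg \, e$ lay in $P$, then $e\wedge \neg \, e=0$ would lie in $P$ as well (using the standard equivalence $a,b\in F$ iff $a\wedge b\in F$ for filters), contradicting the fact that a prime filter is proper. So exactly one of $e,\neg \, e$ belongs to $P$.

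From this I immediately get, for every $P\in {\rm Spec}(A)$, that $e\notin P$ if and only if $\neg \, e\in P$, which is precisely the identity $D(e)=V(\neg \, e)$. Intersecting both sides with ${\rm Max}(A)$ and using $d(e)=D(e)\cap {\rm Max}(A)$, $v(\neg \, e)=V(\neg \, e)\cap {\rm Max}(A)$ yields $d(e)=v(\neg \, e)$. For the remaining two equalities I would pass to complements inside ${\rm Spec}(A)$: $V(e)={\rm Spec}(A)\setminus D(e)={\rm Spec}(A)\setminus V(\neg \, e)=D(\neg \, e)$, using the already recorded identity $D(a)={\rm Spec}(A)\setminus V(a)$; intersecting with ${\rm Max}(A)$ then gives $v(e)=d(\neg \, e)$.

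There is essentially no obstacle here: the argument is short and purely set-theoretic once the dichotomy for prime filters is in place. The only point that deserves a word of care is the claim that $e$ and $\neg \, e$ cannot simultaneously belong to a prime filter, and this is exactly where the properness of prime filters is combined with the Boolean identity $e\wedge \neg \, e=0$; everything else is bookkeeping with the definitions of $V$, $D$, $v$, $d$.
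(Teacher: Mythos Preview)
Your proof is correct and is essentially the same argument the paper intends: the paper's one--line proof just cites ``the definition of a prime filter, along with Lemma \ref{cbool}, (\ref{cbool1}) and (\ref{cbool0})'', which is exactly your dichotomy that a prime filter contains precisely one of $e$ and $\neg\, e$. You have merely spelled out the details the paper leaves implicit.
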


\begin{proof} By the definition of a prime filter, along with Lemma \ref{cbool}, (\ref{cbool1}) and (\ref{cbool0}).\end{proof}

\begin{corollary}
\begin{itemize}
\item $\{V(e)\ |\ e\in {\cal B}(A)\}=\{D(e)\ |\ e\in {\cal B}(A)\}$ is a family of clopen sets in ${\rm Spec}(A)$;
\item $\{v(e)\ |\ e\in {\cal B}(A)\}=\{d(e)\ |\ e\in {\cal B}(A)\}$ is a family of clopen sets in ${\rm Max}(A)$.\end{itemize}\label{corvdb}\end{corollary}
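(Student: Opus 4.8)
The plan is to read this off directly from Lemma \ref{vsidbool}, so the proof is essentially bookkeeping. First I would record the one fact that needs a moment's thought: by Lemma \ref{cbool}, (\ref{cbool0}), the negation $\neg$ carries ${\cal B}(A)$ into itself and coincides there with the complementation of the Boolean algebra ${\cal B}(A)$; in particular ${\cal B}(A)\subseteq{\rm Reg}(A)$ gives $\neg\,\neg\,e=e$ for every $e\in{\cal B}(A)$, so the map $e\mapsto\neg\,e$ is an involutive bijection of ${\cal B}(A)$ onto itself.

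Next I would establish the equality of the two families. By Lemma \ref{vsidbool}, $V(e)=D(\neg\,e)$ for every $e\in{\cal B}(A)$, so each $V(e)$ with $e\in{\cal B}(A)$ equals $D(f)$ for $f=\neg\,e\in{\cal B}(A)$; conversely $D(e)=V(\neg\,e)$ exhibits each $D(e)$ with $e\in{\cal B}(A)$ as some $V(f)$ with $f\in{\cal B}(A)$. Since $\neg$ permutes ${\cal B}(A)$, these two inclusions together yield $\{V(e)\ |\ e\in{\cal B}(A)\}=\{D(e)\ |\ e\in{\cal B}(A)\}$. The identical computation with $v$, $d$ in place of $V$, $D$ (using the other two equalities of Lemma \ref{vsidbool}) gives $\{v(e)\ |\ e\in{\cal B}(A)\}=\{d(e)\ |\ e\in{\cal B}(A)\}$.

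Finally, to see that every member of this common family is clopen, I would recall that $V(e)=V([e))$ is by definition a closed set of the Stone topology of ${\rm Spec}(A)$, while $D(e)=D([e))$ is an open set; since Lemma \ref{vsidbool} identifies $V(e)$ with $D(\neg\,e)$, each such set is both closed and open, hence clopen. The same remark applied in ${\rm Max}(A)$, where $v(e)$ is closed and $d(e)=v(\neg\,e)$ is open, finishes the maximal-spectrum statement. There is no real obstacle here; the only point that is not purely formal is the verification that $\neg$ genuinely permutes ${\cal B}(A)$, which is exactly Lemma \ref{cbool}, (\ref{cbool0}).
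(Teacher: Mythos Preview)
Your proof is correct and follows exactly the approach intended by the paper: the corollary is stated there without proof, as an immediate consequence of Lemma \ref{vsidbool}, and you have simply spelled out the routine verification (including the observation from Lemma \ref{cbool}, (\ref{cbool0}), that $\neg$ is an involution on ${\cal B}(A)$) that the paper leaves implicit.
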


\begin{lemma}\begin{itemize}
\item The topological closure in ${\rm Spec}(A)$ of any subset $S$ of ${\rm Spec}(A)$ is $\displaystyle V(\bigcap _{P\in S}P)$.
\item Consequently, the topological closure of ${\rm Max}(A)$ in ${\rm Spec}(A)$ is $V({\rm Rad}(A))$.
\item Furthermore, if $A$ is semisimple, then the topological closure of ${\rm Max}(A)$ in ${\rm Spec}(A)$ is ${\rm Spec}(A)$, that is ${\rm Max}(A)$ is dense in ${\rm Spec}(A)$.\end{itemize}\label{lema1}\end{lemma}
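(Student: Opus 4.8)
The plan is to prove the three bullet points in order, each one reducing to the previous.

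\medskip

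\textbf{First bullet.} The claim is that for any $S\subseteq{\rm Spec}(A)$, the topological closure $\overline{S}$ equals $V(\bigcap_{P\in S}P)$. Since the closed sets of the Stone topology are exactly the sets of the form $V(F)$ with $F\in{\rm Filt}(A)$, and $\bigcap_{P\in S}P$ is a filter of $A$ (an arbitrary intersection of filters), $V(\bigcap_{P\in S}P)$ is indeed closed. It contains $S$: each $Q\in S$ certainly contains $\bigcap_{P\in S}P$, so $Q\in V(\bigcap_{P\in S}P)$. It remains to show it is the \emph{smallest} closed set containing $S$. So suppose $V(G)\supseteq S$ for some filter $G$; I must show $V(G)\supseteq V(\bigcap_{P\in S}P)$, equivalently (by Lemma \ref{incld}) that $\bigcap_{P\in S}P\supseteq G$. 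But $V(G)\supseteq S$ means exactly that $G\subseteq P$ for every $P\in S$, hence $G\subseteq\bigcap_{P\in S}P$, as desired. This finishes the first part.

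\medskip

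\textbf{Second bullet.} Apply the first bullet with $S={\rm Max}(A)$. Then $\bigcap_{P\in{\rm Max}(A)}P={\rm Rad}(A)$ by the definition of the radical as the intersection of all maximal filters, so $\overline{{\rm Max}(A)}=V({\rm Rad}(A))$.

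\medskip

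\textbf{Third bullet.} If $A$ is semisimple, then ${\rm Rad}(A)=\{1\}$ by definition, and we recalled in Section \ref{preliminaries} (immediately after introducing $V$) that $V(F)={\rm Spec}(A)$ iff $F=\{1\}$; hence $V({\rm Rad}(A))=V(\{1\})={\rm Spec}(A)$. Thus $\overline{{\rm Max}(A)}={\rm Spec}(A)$, i.e. ${\rm Max}(A)$ is dense in ${\rm Spec}(A)$.

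\medskip

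None of these steps presents a real obstacle; the only point requiring a little care is the minimality argument in the first bullet, where the equivalence ``$V(G)\subseteq V(F)$ iff $F\subseteq G$'' from Lemma \ref{incld} is exactly what converts the topological statement (smallest closed superset) into the algebraic statement (largest filter contained in all the $P$'s), and this in turn rests on the already-recorded fact that every filter is the intersection of the prime filters above it. Everything else is unwinding definitions.
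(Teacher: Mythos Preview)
Your proof is correct. The paper does not supply its own proof of this lemma: it is listed among ``the first results in this section, which we present without their proofs, [as they] are known and straightforward.'' Your argument is the standard one and matches what the paper presumably has in mind, including the one nontrivial step (minimality via the equivalence $V(G)\subseteq V(F)\Leftrightarrow F\subseteq G$, which in turn rests on every filter being the intersection of the primes above it).
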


\begin{lemma}
Let $a\in A$ and $e\in {\cal B}(A)$. Then:

\begin{enumerate}
\item\label{ltextnou1} $v(a)\subseteq v(e)$ iff $a\odot \neg \, e$ is nilpotent;
\item\label{ltextnou2} $v(a)\subseteq d(e)$ iff $a\odot e$ is nilpotent;
\item\label{ltextnou3} $d(a)\subseteq v(e)$ iff $a\vee e\in {\rm Rad}(A)$.\end{enumerate}\label{ltextnou}\end{lemma}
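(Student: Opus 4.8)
The plan is to reduce all three equivalences to the elementary dictionary already set up in this section, so that essentially no new computation is needed. The tools I would use are: Proposition \ref{vsidelem}, which says that $v$ turns $\odot $ (and $\wedge $) into intersection and $d$ turns $\vee $ into intersection; Lemma \ref{vsidbool}, which for a Boolean element $e$ gives the swap $v(e)=d(\neg \, e)$ and $d(e)=v(\neg \, e)$; and the criteria listed near the start of Section \ref{specretic}, stating that $v(b)=\emptyset $ iff $b$ is nilpotent and $d(b)=\emptyset $ iff $b\in {\rm Rad}(A)$.

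For (\ref{ltextnou1}) I would first replace $v(e)$ by $d(\neg \, e)$ using Lemma \ref{vsidbool}; since $d(\neg \, e)={\rm Max}(A)\setminus v(\neg \, e)$, the inclusion $v(a)\subseteq v(e)$ becomes the disjointness statement $v(a)\cap v(\neg \, e)=\emptyset $. Then Proposition \ref{vsidelem}, (\ref{vsidelem1}), rewrites the left-hand side as $v(a\odot \neg \, e)$, and the emptiness criterion closes the argument: $v(a\odot \neg \, e)=\emptyset $ exactly when $a\odot \neg \, e$ is nilpotent.

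For (\ref{ltextnou2}) the same pattern applies even more directly: $v(a)\subseteq d(e)={\rm Max}(A)\setminus v(e)$ means $v(a)\cap v(e)=\emptyset $, i.e.\ $v(a\odot e)=\emptyset $ by Proposition \ref{vsidelem}, (\ref{vsidelem1}), i.e.\ $a\odot e$ is nilpotent. For (\ref{ltextnou3}), dually, $d(a)\subseteq v(e)={\rm Max}(A)\setminus d(e)$ means $d(a)\cap d(e)=\emptyset $, i.e.\ $d(a\vee e)=\emptyset $ by Proposition \ref{vsidelem}, (\ref{vsidelem2}), i.e.\ $a\vee e\in {\rm Rad}(A)$.

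I do not expect a real obstacle here; the only subtlety is the bookkeeping about where the hypothesis $e\in {\cal B}(A)$ is actually used — it enters only in (\ref{ltextnou1}), through the $v$--$d$ duality of Lemma \ref{vsidbool}, whereas parts (\ref{ltextnou2}) and (\ref{ltextnou3}) go through for an arbitrary $e\in A$. (One could equally phrase (\ref{ltextnou1}) in terms of $d(\neg \, e)$ from the outset, but keeping $v(e)$ matches the parallel form of (\ref{ltextnou2}).)
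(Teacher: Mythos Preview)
Your proof is correct and follows essentially the same route as the paper: Lemma \ref{vsidbool} to trade $v(e)$ for $d(\neg e)$, Proposition \ref{vsidelem} to turn intersections into $v(a\odot\cdot)$ or $d(a\vee\cdot)$, and the nilpotent/radical criteria to finish. The only cosmetic difference is that the paper derives (\ref{ltextnou2}) from (\ref{ltextnou1}) via $\neg\neg e=e$ (Lemma \ref{cbool}, (\ref{cbool0})), whereas you prove (\ref{ltextnou2}) directly --- which, as you observe, has the bonus of showing that the Boolean hypothesis on $e$ is not needed for (\ref{ltextnou2}) and (\ref{ltextnou3}).
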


\begin{proof} (\ref{ltextnou1}) By Lemma \ref{vsidbool} and Proposition \ref{vsidelem}, $v(a)\subseteq v(e)$ iff $v(a)\subseteq {\rm Max}(A)\setminus v(\neg \, e)$ iff $v(a)\cap v(\neg \, e)=\emptyset $ iff $v(a\odot \neg \, e)=\emptyset $ iff $a\odot \neg \, e$ is nilpotent.

\noindent (\ref{ltextnou2}) By (\ref{ltextnou1}) and Lemma \ref{cbool}, (\ref{cbool0}).

\noindent (\ref{ltextnou3}) Again by Proposition \ref{vsidelem}, $d(a)\subseteq v(e)$ iff $d(a)\subseteq {\rm Max}(A)\setminus d(e)$ iff $d(a)\cap d(e)=\emptyset $ iff $d(a\vee e)=\emptyset $ iff $a\vee e\in {\rm Rad}(A)$.\end{proof}

\begin{lemma}
For any $a\in A$, $\displaystyle d(a)=\bigcup _{n=1}^{\infty }v(\neg \, a^n)$.\label{l4textnou}\end{lemma}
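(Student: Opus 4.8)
The plan is to show the two set inclusions separately, using the characterization of the maximal spectrum that is available here. Recall that $\displaystyle d(a)=\{P\in {\rm Max}(A)\ |\ a\notin P\}$ and, by Lemma \ref{caractfiltmax}, for a maximal filter $M$ we have $a\notin M$ iff there exists $n\in \N ^*$ with $\neg \, a^n\in M$. On the right-hand side, $\displaystyle \bigcup _{n=1}^{\infty }v(\neg \, a^n)=\bigcup _{n=1}^{\infty }\{P\in {\rm Max}(A)\ |\ \neg \, a^n\in P\}$. So the statement is essentially a direct translation of Lemma \ref{caractfiltmax} into spectral language, and I expect the whole argument to be short.

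For the inclusion $\displaystyle d(a)\subseteq \bigcup _{n=1}^{\infty }v(\neg \, a^n)$: take $M\in d(a)$, i.e. $M$ is a maximal filter with $a\notin M$. By Lemma \ref{caractfiltmax} there is some $n\in \N ^*$ with $\neg \, a^n\in M$, which says precisely that $M\in v(\neg \, a^n)$, hence $M$ lies in the union.

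For the reverse inclusion $\displaystyle \bigcup _{n=1}^{\infty }v(\neg \, a^n)\subseteq d(a)$: suppose $M\in v(\neg \, a^n)$ for some $n$, i.e. $M$ is a maximal filter containing $\neg \, a^n$. If we had $a\in M$, then, since $M$ is a filter and $a^n\le a$ fails in the wrong direction — more carefully, $a\in M$ forces $a^n\in M$ by closure of $M$ under $\odot $ (using $a^n=a\odot \dots \odot a$), and then $a^n\odot \neg \, a^n=0\in M$ by Lemma \ref{latrez}, (\ref{latrez1}), contradicting the fact that $M$ is proper. Hence $a\notin M$, i.e. $M\in d(a)$. (Alternatively one could again invoke Lemma \ref{caractfiltmax} directly to get $a\notin M$ from $\neg \, a^n\in M$.)

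The only point that needs the slightest care is the second inclusion, where one must use that a maximal filter is proper (so $0\notin M$) together with $a^n\odot \neg \, a^n=0$; this is the ``main obstacle,'' though it is entirely routine. Assembling the two inclusions gives the claimed equality.
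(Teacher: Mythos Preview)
Your proof is correct and follows essentially the same route as the paper: both amount to reading off the equivalence $a\notin M\Leftrightarrow(\exists\,n)\,\neg\,a^n\in M$ from Lemma~\ref{caractfiltmax}. The paper simply writes this as a single chain of ``iff''s, whereas you split it into two inclusions and, for the reverse one, give a direct argument (via $a^n\odot\neg\,a^n=0$) instead of invoking the biconditional in Lemma~\ref{caractfiltmax}; as you yourself note, the lemma already handles that direction.
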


\begin{proof} Let $a\in A$ and $M\in {\rm Max}(A)$. Then, by Lemma \ref{caractfiltmax}: $M\in d(a)$ iff $a\notin M$ iff $\neg \, a^n\in M$ for some $n\in \N ^*$ iff $M\in v(\neg \, a^n)$ for some $n\in \N ^*$ iff $\displaystyle M\in \bigcup _{n=1}^{\infty }v(\neg \, a^n)$.\end{proof}

\begin{lemma} The topological spaces ${\rm Max}(A)$ and ${\rm Max}(A/{\rm Rad}(A))$ are homeomorphic.\label{lema2}\end{lemma}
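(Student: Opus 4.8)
The plan is to exhibit a bijection between ${\rm Max}(A)$ and ${\rm Max}(A/{\rm Rad}(A))$ coming from the canonical surjection $p:A\rightarrow A/{\rm Rad}(A)$, and then check that it is a homeomorphism by matching up basic open sets. First I would recall the standard correspondence (the Second Isomorphism Theorem for residuated lattices, i.e.\ the lattice isomorphism between ${\rm Filt}(A/{\rm Rad}(A))$ and the filters of $A$ containing ${\rm Rad}(A)$): every filter of $A/{\rm Rad}(A)$ has the form $G/{\rm Rad}(A)$ for a unique filter $G\supseteq{\rm Rad}(A)$ of $A$, and this correspondence preserves inclusion, hence preserves properness and maximality. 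Since ${\rm Rad}(A)$ is the intersection of all maximal filters of $A$, every maximal filter $M$ of $A$ satisfies ${\rm Rad}(A)\subseteq M$, so $M\mapsto M/{\rm Rad}(A)$ is a well-defined bijection $\Phi:{\rm Max}(A)\rightarrow{\rm Max}(A/{\rm Rad}(A))$; it is the restriction to maximal spectra of the order isomorphism, hence bijective with inverse sending $N$ to $p^{-1}(N)$.

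Next I would verify that $\Phi$ is continuous and open by checking it on a basis. Recall from the excerpt that $\{d(a)\mid a\in A\}$ is a basis of open sets for the Stone topology of ${\rm Max}(A)$, and likewise $\{d(x)\mid x\in A/{\rm Rad}(A)\}$ for ${\rm Max}(A/{\rm Rad}(A))$; moreover $d(x)=d(a/{\rm Rad}(A))$ for some $a\in A$ since $p$ is surjective. The key computation is then: for $a\in A$ and $M\in{\rm Max}(A)$,
\[
M\in\Phi^{-1}\bigl(d(a/{\rm Rad}(A))\bigr)\iff a/{\rm Rad}(A)\notin M/{\rm Rad}(A)\iff a\notin M\iff M\in d(a),
\]
where the middle equivalence uses that ${\rm Rad}(A)\subseteq M$ (so $a/{\rm Rad}(A)\in M/{\rm Rad}(A)$ forces $a\in M$, the other direction being trivial). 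Thus $\Phi^{-1}(d(a/{\rm Rad}(A)))=d(a)$, which shows both that $\Phi$ is continuous (preimages of basic opens are open) and that $\Phi$ is open (the image of the basic open $d(a)$ is the basic open $d(a/{\rm Rad}(A))$, using that $\Phi$ is a bijection). Hence $\Phi$ is a homeomorphism.

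The only genuine point requiring care is the well-definedness and bijectivity of $\Phi$, i.e.\ that the order isomorphism ${\rm Filt}(A/{\rm Rad}(A))\cong\{G\in{\rm Filt}(A)\mid{\rm Rad}(A)\subseteq G\}$ does restrict to a bijection between the maximal spectra. This is where one must use that order isomorphisms preserve maximal elements of the posets of \emph{proper} filters — equivalently, that $G$ is proper iff $G/{\rm Rad}(A)$ is proper, which holds because $0\in G\iff 0/{\rm Rad}(A)\in G/{\rm Rad}(A)$ (as ${\rm Rad}(A)$ is a proper filter). I expect the topological half, once the basis description is in hand, to be routine; the main (mild) obstacle is simply assembling the correspondence-theoretic bookkeeping cleanly.
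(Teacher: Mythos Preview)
Your proof is correct. The paper does not actually give a proof of this lemma: it is listed among the ``known and straightforward'' results at the beginning of Section~\ref{specretic} that are presented without proof, so there is nothing to compare against; your argument via the filter--correspondence theorem together with the basis description $\{d(a)\mid a\in A\}$ of the Stone topology is exactly the standard verification one would supply.
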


The Stone topology of the prime (and that of the maximal) spectrum of a bounded distributive lattice is defined just as described above for a residuated lattice, and it satisfies all the properties stated above for residuated lattices, with one small modification: $\odot $ must be replaced by $\wedge $ in each of these properties.

\begin{definition}{\rm \cite{eu1}} A {\em reticulation of $A$} is a pair $(L(A),\lambda )$, where $L(A)$ is a bounded distributive lattice and $\lambda :A\rightarrow L(A)$ is a function that satisfies conditions (1)--(5) below:

\begin{flushleft}
\begin{tabular}{cl}
$(1)$ & for all $a,b\in A$, $\lambda (a\odot b)=\lambda (a)\wedge \lambda (b)$;\\ 
$(2)$ & for all $a,b\in A$, $\lambda (a\vee b)=\lambda (a)\vee \lambda (b)$;\\ 
$(3)$ & $\lambda (0)=0$; $\lambda (1)=1$;\\ 
$(4)$ & $\lambda $ is surjective;\\ 
$(5)$ & for all $a,b\in A$, $\lambda (a)\leq \lambda (b)$ iff there exists an $n\in \N ^{*}$ such that $a^{n}\leq b$.
\end{tabular}
\end{flushleft}
\label{reticulatia}
\end{definition}

\begin{proposition}{\rm \cite{eu1}} The reticulation of $A$ exists and it is unique up to a bounded lattice isomorphism. More precisely:

\begin{itemize}
\item there exists a reticulation of $A$;
\item if $(L_{1},\lambda _{1})$, $(L_{2},\lambda _{2})$ are two reticulations of $A$, then there exists an isomorphism of bounded lattices $f:L_{1}\rightarrow L_{2}$ such that $f\circ \lambda _{1}=\lambda _{2}$.\end{itemize}\label{exsiunic}\end{proposition}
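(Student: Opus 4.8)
The plan is to handle existence and uniqueness separately. For existence I would realize $L(A)$ as a quotient of $A$. Put $a\sim b$ iff $[a)=[b)$, which, by the description of principal filters recalled in Section \ref{preliminaries}, amounts to the existence of $m,n\in\N^{*}$ with $a^{m}\le b$ and $b^{n}\le a$. Using the identities $[a\odot b)=[a)\vee[b)$ and $[a\vee b)=[a)\cap[b)$ recalled in Section \ref{preliminaries} (and the monotonicity of $\odot$ from Lemma \ref{latrez}, (\ref{latrez6})), one checks that $\sim$ is compatible with $\odot$ and with $\vee$, so that on $L(A):=A/{\sim}$ the operations $\overline{a}\wedge\overline{b}:=\overline{a\odot b}$ and $\overline{a}\vee\overline{b}:=\overline{a\vee b}$ are well defined, where $\overline{a}$ denotes the class of $a$; set $\lambda(a):=\overline{a}$. (One may instead shortcut this by taking $L(A):={\rm PFilt}(A)$ ordered by reverse inclusion and $\lambda(a)=[a)$, invoking the fact from Section \ref{preliminaries} that ${\rm PFilt}(A)$ is a bounded sublattice of the distributive lattice ${\rm Filt}(A)$, together with the observation that the order dual of a bounded distributive lattice is again a bounded distributive lattice.)

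Next I would check that $(L(A),\vee,\wedge,\overline{0},\overline{1})$ is a bounded distributive lattice and that $\lambda$ satisfies (1)--(5). Commutativity, associativity and idempotency of $\wedge$ and $\vee$ come from the corresponding properties of $\odot$ (the operation of a commutative monoid) and of $\vee$, using $\overline{a^{2}}=\overline{a}$ because $[a^{2})=[a)$; the absorption laws follow from Lemma \ref{latrez}, (\ref{latrez8}) and (\ref{latrez9}); distributivity is immediate from $a\odot(b\vee c)=(a\odot b)\vee(a\odot c)$, i.e. Lemma \ref{latrez}, (\ref{latrez9}); and $\overline{0}$, $\overline{1}$ are the bounds by Lemma \ref{latrez}, (\ref{latrez8}). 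Conditions (1), (2), (3), (4) are built into the construction, so the only one needing an argument is (5): $\lambda(a)\le\lambda(b)$ means $\overline{a\odot b}=\overline{a}$, i.e. $[a)\vee[b)=[a)$, i.e. $[b)\subseteq[a)$, i.e. $a^{n}\le b$ for some $n\in\N^{*}$.

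For uniqueness, the decisive observation is that (5), read inside $L_{1}$ and inside $L_{2}$, forces the fibres of $\lambda_{1}$ and $\lambda_{2}$ to coincide: $\lambda_{i}(a)=\lambda_{i}(b)$ holds iff $a^{m}\le b$ and $b^{n}\le a$ for suitable $m,n\in\N^{*}$, a condition that does not mention $i$. Hence, using the surjectivity (4) of $\lambda_{1}$ and $\lambda_{2}$, the rule $f(\lambda_{1}(a)):=\lambda_{2}(a)$ defines a bijection $f:L_{1}\rightarrow L_{2}$ with $f\circ\lambda_{1}=\lambda_{2}$. That $f$ is a homomorphism of bounded lattices is a one-line computation from (1)--(3): $f(\lambda_{1}(a)\wedge\lambda_{1}(b))=f(\lambda_{1}(a\odot b))=\lambda_{2}(a\odot b)=\lambda_{2}(a)\wedge\lambda_{2}(b)=f(\lambda_{1}(a))\wedge f(\lambda_{1}(b))$ by (1), and likewise for $\vee$, $0$ and $1$ using (2) and (3). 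A bijective bounded-lattice homomorphism is an isomorphism, which finishes the proof.

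Nothing here is deep; the place where care is needed is the existence half, namely verifying that $\sim$ is a congruence for $\odot$ and for $\vee$ and that the induced operations obey the lattice axioms and condition (5) — all routine bookkeeping with principal filters and Lemma \ref{latrez}, but where one must not conflate the meet of $L(A)$, which comes from $\odot$, with the meet $\wedge$ of $A$.
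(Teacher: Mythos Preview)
Your proof is correct, and it aligns with what the paper indicates, though the paper itself does not prove Proposition~\ref{exsiunic}: it is quoted from \cite{eu1}. The only trace of a proof in the present paper is Remark~\ref{constrretic}, which records that $({\rm PFilt}(A),\cap,\vee,A,[1))$ together with $\lambda(a)=[a)$ is a reticulation of $A$. Your quotient construction $L(A)=A/{\sim}$ with $a\sim b$ iff $[a)=[b)$ is exactly this, rephrased: the map $\overline{a}\mapsto[a)$ is a bijection onto ${\rm PFilt}(A)$ turning your $\wedge$ and $\vee$ into $\vee$ and $\cap$ on filters, i.e.\ realizing $L(A)$ as the order dual of ${\rm PFilt}(A)$, precisely as the Remark says. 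You even note this shortcut yourself, so there is no genuine difference in approach for existence.

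For uniqueness, the paper says nothing beyond the statement; your argument --- that condition~(5) forces $\lambda_1$ and $\lambda_2$ to have the same fibres, whence $f(\lambda_1(a)):=\lambda_2(a)$ is a well-defined bijection, and (1)--(3) make it a bounded lattice morphism --- is the natural one and is correct.
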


We shall keep the notations from Definition \ref{reticulatia} in what follows. So, henceforth, $(L(A),\lambda )$ will be the reticulation of $A$. We shall also denote by $\lambda ^*$ the inverse image of the function $\lambda $: for all $S\subseteq L(A)$, $\lambda ^*(S)=\lambda ^{-1}(S)$.

\begin{proposition}{\rm \cite{eu1}, \cite{eu2}}

\begin{enumerate}
\item\label{retic0} For all $a,b\in A$, $\lambda (a\wedge b)=\lambda (a)\wedge \lambda (b)$, hence $\lambda $ is a bounded lattice morphism between the underlying bounded lattice of $A$ and $L(A)$; consequently, $\lambda $ is order--preserving (which also follows from the converse implication in condition $(5)$ for $n=1$);
\item\label{retic0,5} for all $a,b\in A$, $\lambda (a)=\lambda (b)$ iff $[a)=[b)$;
\item\label{retic0,7} for all $a\in A$ and all $n\in \N ^*$, $\lambda (a^n)=\lambda (a)$;
\item\label{retic1} $\lambda ^*\mid _{\textstyle {\rm Filt}(L(A))}:{\rm Filt}(L(A))\rightarrow {\rm Filt}(A)$ is a bounded lattice isomorphism (thus an order isomorphism), whose inverse takes every $F\in {\rm Filt}(A)$ to $\lambda (F)\in {\rm Filt}(L(A))$;
\item\label{retic2} $\lambda ^*\mid _{\textstyle {\rm Spec}(L(A))}:{\rm Spec}(L(A))\rightarrow {\rm Spec}(A)$ is a homeomorphism;
\item\label{retic3} $\lambda ^*\mid _{\textstyle {\rm Max}(L(A))}:{\rm Max}(L(A))\rightarrow {\rm Max}(A)$ is a homeomorphism;
\item\label{retic4} $\lambda \mid _{\textstyle {\cal B}(A)}:{\cal B}(A)\rightarrow {\cal B}(L(A))$ is a Boolean isomorphism;
\item\label{retic5} for every filter $F$ of $A$, if $(L(A/F),\lambda _F)$ is the reticulation of $A/F$, then the mapping that takes, for every $a\in A$, $\lambda _F(a/F)$ to $\lambda (a)/\lambda (F)$, is a well--defined bounded lattice isomorphism between $L(A/F)$ and $L(A)/\lambda (F)$.
\end{enumerate}\label{retic}\end{proposition}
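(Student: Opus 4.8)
The plan is to derive parts (\ref{retic0})--(\ref{retic0,7}) straight from axioms $(1)$--$(5)$ of Definition \ref{reticulatia}, then bootstrap the filter isomorphism (\ref{retic1}) from them, read off the spectral homeomorphisms (\ref{retic2}), (\ref{retic3}) and the Boolean isomorphism (\ref{retic4}) from (\ref{retic1}), and finally obtain (\ref{retic5}) by recognizing $(L(A)/\lambda (F),\mu )$ --- where $\mu (a/F)=\lambda (a)/\lambda (F)$ --- as a reticulation of $A/F$ and invoking uniqueness. First, condition $(5)$ with $n=1$ makes $\lambda $ order--preserving; so for (\ref{retic0}), from $a\odot b\leq a\wedge b\leq a,b$ we get $\lambda (a\odot b)\leq \lambda (a\wedge b)\leq \lambda (a)\wedge \lambda (b)=\lambda (a\odot b)$ (the last equality by axiom $(1)$), forcing $\lambda (a\wedge b)=\lambda (a)\wedge \lambda (b)$; together with axioms $(2)$, $(3)$ this makes $\lambda $ a bounded lattice morphism of the underlying lattices. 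For (\ref{retic0,5}), axiom $(5)$ reads ``$\lambda (a)\leq \lambda (b)$ iff $b\in [a)$ iff $[b)\subseteq [a)$'', and applying this both ways gives $\lambda (a)=\lambda (b)$ iff $[a)=[b)$. Part (\ref{retic0,7}) then follows from $[a^n)=[a)$ together with (\ref{retic0,5}).

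For (\ref{retic1}) I would show both directions of the correspondence land in the right place and are mutually inverse. Given $G\in {\rm Filt}(L(A))$, the set $\lambda ^*(G)=\lambda ^{-1}(G)$ is a filter of $A$ (closure under $\odot $ by axiom $(1)$, upward closure by order--preservation, nonemptiness since $\lambda (1)=1$). Given $F\in {\rm Filt}(A)$, $\lambda (F)$ is a filter of $L(A)$: closure under $\wedge $ is axiom $(1)$ again, and for upward closure, if $\lambda (a)\leq y$ with $a\in F$ one uses surjectivity to write $y=\lambda (c)$, picks $n$ with $a^n\leq c$, and concludes $c\in F$ because $a^n\in F$. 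Mutual inverseness: $\lambda ^{-1}(\lambda (F))=F$ uses (\ref{retic0,5}) (if $\lambda (a)=\lambda (b)$ with $b\in F$ then $a\in [a)=[b)\subseteq F$), and $\lambda (\lambda ^{-1}(G))=G$ uses surjectivity. Both maps preserve inclusion, so the bijection is an order isomorphism of the bounded lattices $({\rm Filt}(L(A)),\vee ,\cap )$ and $({\rm Filt}(A),\vee ,\cap )$, hence a bounded lattice isomorphism, with the stated inverse.

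Parts (\ref{retic2}) and (\ref{retic3}) I would then get from (\ref{retic1}): the correspondence matches the top filter $L(A)$ with the top filter $A$, hence proper filters with proper filters and maximal proper filters with maximal proper filters, which settles (\ref{retic3}); and it preserves primeness, since if $P\in {\rm Spec}(A)$ then $\lambda (P)$ is proper (else $\lambda (a)=0$ with $a\in P$ forces $[a)=[0)=A$, so $a$ is nilpotent and $0\in P$) and prime (if $\lambda (a)\vee \lambda (b)=\lambda (a\vee b)\in \lambda (P)$, then by (\ref{retic0,5}) and $[a\vee b)=[a)\cap [b)$ we get $a\vee b\in P$, so $a\in P$ or $b\in P$), while $\lambda ^{-1}$ of a prime filter of $L(A)$ is prime by axiom $(2)$. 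Thus $\lambda ^*$ restricts to a bijection on prime spectra, and it is a homeomorphism because $(\lambda ^*)^{-1}(D(a))=D(\lambda (a))$ and, by surjectivity, these are all the basic opens of ${\rm Spec}(L(A))$; the same restricts to ${\rm Max}$, which carries the subspace topology.

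For (\ref{retic4}): applying $\lambda $ to the identities $e\vee \neg \, e=1$ and $e\wedge \neg \, e=0$ (Lemma \ref{cbool}, (\ref{cbool0})) shows $\lambda (e)\in {\cal B}(L(A))$ with complement $\lambda (\neg \, e)$, so $\lambda $ restricts to a Boolean morphism ${\cal B}(A)\rightarrow {\cal B}(L(A))$. Injectivity: $\lambda (e)=\lambda (f)$ gives $[e)=[f)$ by (\ref{retic0,5}), and for $e,f\in {\cal B}(A)$ one has $[e)=\{a\ |\ e\leq a\}$, so $e\leq f$ and $f\leq e$, i.e.\ $e=f$. The crux is surjectivity: for $y\in {\cal B}(L(A))$ write $y=\lambda (a)$ and its complement as $\lambda (b)$; then $\lambda (a\vee b)=1$ gives $a\vee b=1$, and a short induction using Lemma \ref{latrez}, (\ref{latrez9}), yields $a^k\vee b^k=1$ for all $k$, while $\lambda (a\wedge b)=0$ gives $a\wedge b$ nilpotent, and $a\wedge b=a\odot b$ by Lemma \ref{latrez}, (\ref{latrez10}); choosing $n$ with $a^n\odot b^n=(a\odot b)^n=0$ and using Lemma \ref{latrez}, (\ref{latrez10}), once more gives $a^n\wedge b^n=a^n\odot b^n=0$ too, so $a^n\in {\cal B}(A)$ with complement $b^n$ and $\lambda (a^n)=\lambda (a)=y$ by (\ref{retic0,7}). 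Complements are preserved as already noted, so the restriction is a Boolean isomorphism.

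Finally, for (\ref{retic5}) I would verify that $(L(A)/\lambda (F),\mu )$ is a reticulation of the residuated lattice $A/F$ and then invoke the uniqueness clause of Proposition \ref{exsiunic}, which produces the unique bounded lattice isomorphism $g:L(A/F)\rightarrow L(A)/\lambda (F)$ with $g\circ \lambda _F=\mu $ --- precisely the stated map. Here $L(A)/\lambda (F)$ is a bounded distributive lattice by (\ref{retic1}); $\mu $ is well defined since, if $d(a,b)\in F$, then $d(a,b)\odot a\leq b$ and $d(a,b)\odot b\leq a$ force $\lambda (d(a,b))\wedge \lambda (a)=\lambda (d(a,b))\wedge \lambda (b)$ with $\lambda (d(a,b))\in \lambda (F)$; axioms $(1)$--$(4)$ for $\mu $ are immediate from those for $\lambda $ and from surjectivity. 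Axiom $(5)$ is the lone computation: $\mu (a/F)\leq \mu (b/F)$ unwinds (via the order in a lattice quotient by a filter) to ``$\lambda (a)\wedge \lambda (c)\leq \lambda (b)$ for some $c\in F$'', hence to ``$a^m\odot c^{\prime }\leq b$ for some $m$ and some $c^{\prime }\in F$'' (absorbing powers of $c$ into $F$), hence by residuation to ``$a^m\rightarrow b\in F$'', i.e.\ to $(a/F)^m\leq b/F$. I expect the two genuine obstacles to be the surjectivity in (\ref{retic4}) --- actually manufacturing a Boolean element of $A$ above a given complemented element of $L(A)$, which the ``nilpotent power'' trick handles --- and the bookkeeping in (\ref{retic5}), keeping the two orders of quotienting carefully aligned so that uniqueness of the reticulation can be applied.
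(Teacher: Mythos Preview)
The paper does not actually prove this proposition: it is quoted from \cite{eu1}, \cite{eu2} and stated without proof, so there is no ``paper's own proof'' to compare against. That said, your argument is correct and is the natural one; in particular your surjectivity proof for (\ref{retic4}) via $a^n\vee b^n=1$ and $a^n\wedge b^n=0$, and your verification of (\ref{retic5}) by checking that $(L(A)/\lambda(F),\mu)$ satisfies axioms $(1)$--$(5)$ and then appealing to the uniqueness clause of Proposition \ref{exsiunic}, are exactly how these results are established in the cited references. One small point worth making explicit: the claim ``$a\vee b=1\Rightarrow a^k\vee b^k=1$'' is best argued via the inequality $(a\vee x)\odot (a\vee y)\leq a\vee (x\odot y)$ (from Lemma \ref{latrez}, (\ref{latrez9}) and (\ref{latrez8})), which gives first $a\vee b^k=1$ by induction on $k$, and then $a^k\vee b^k=1$ by the symmetric induction; your phrase ``a short induction'' is fine but this is the step a careful reader will want spelled out.
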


\begin{remark}
The definition of $\lambda ^*$ (as the inverse image of another function) shows that $\lambda ^*$ preserves arbitrary intersections.\label{intersarb}\end{remark}

\begin{corollary}
\begin{enumerate}
\item\label{transferad1} $A$ is local iff $L(A)$ is local;
\item\label{transferad2} $A$ is semilocal iff $L(A)$ is semilocal;
\item\label{transferad3} $\lambda ^*({\rm Rad}(L(A)))={\rm Rad}(A)$, hence $\lambda ({\rm Rad}(A))={\rm Rad}(L(A))$.\label{transferad}\end{enumerate}\end{corollary}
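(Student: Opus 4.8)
The plan is to derive all three parts from the functorial properties of the reticulation gathered in Proposition \ref{retic}, specifically the homeomorphism $\lambda ^*\mid _{{\rm Max}(L(A))}:{\rm Max}(L(A))\rightarrow {\rm Max}(A)$ of item (\ref{retic3}) and the observation (Remark \ref{intersarb}) that $\lambda ^*$ commutes with arbitrary intersections. Since a homeomorphism is in particular a bijection, item (\ref{retic3}) already gives a bijective correspondence $M\mapsto \lambda ^*(M)$ between ${\rm Max}(L(A))$ and ${\rm Max}(A)$, so the two maximal spectra have the same cardinality; this settles (\ref{transferad1}) and (\ref{transferad2}) simultaneously, since $A$ is local exactly when $|{\rm Max}(A)|=1$ and semilocal exactly when $|{\rm Max}(A)|$ is finite, and likewise for $L(A)$.

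For (\ref{transferad3}), I would start from the descriptions ${\rm Rad}(A)=\bigcap _{N\in {\rm Max}(A)}N$ and ${\rm Rad}(L(A))=\bigcap _{M\in {\rm Max}(L(A))}M$, both being intersections of filters, hence filters, by the facts on filters recalled in Section \ref{preliminaries}, which hold for residuated lattices and for bounded distributive lattices alike. Applying $\lambda ^*$ to the second description and using Remark \ref{intersarb} gives $\lambda ^*({\rm Rad}(L(A)))=\bigcap _{M\in {\rm Max}(L(A))}\lambda ^*(M)$; since $M\mapsto \lambda ^*(M)$ ranges bijectively over ${\rm Max}(A)$ by (\ref{retic3}), this last intersection equals $\bigcap _{N\in {\rm Max}(A)}N={\rm Rad}(A)$. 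For the final ``hence'' clause, I would invoke (\ref{retic1}): the map $\lambda ^*\mid _{{\rm Filt}(L(A))}:{\rm Filt}(L(A))\rightarrow {\rm Filt}(A)$ is a bijection whose inverse sends each $F$ to $\lambda (F)$; applying this inverse to the equality $\lambda ^*({\rm Rad}(L(A)))={\rm Rad}(A)$ --- which is legitimate because ${\rm Rad}(L(A))$ is a filter of $L(A)$ --- yields ${\rm Rad}(L(A))=\lambda ({\rm Rad}(A))$.

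The argument is routine once Proposition \ref{retic} is in hand, and there is no genuine obstacle. The only point that deserves a moment's attention is the bookkeeping in the last step, namely that the inverse of $\lambda ^*\mid _{{\rm Filt}(L(A))}$ is being applied to an element that really lies in ${\rm Filt}(A)$; this is why it is worth recording explicitly that both radicals are filters.
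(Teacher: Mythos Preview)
Your proposal is correct and follows essentially the same approach as the paper: both derive (\ref{transferad1}) and (\ref{transferad2}) from the bijection/homeomorphism in Proposition~\ref{retic}\,(\ref{retic3}), obtain the first equality in (\ref{transferad3}) from (\ref{retic3}) together with Remark~\ref{intersarb}, and then pass to the second equality via the filter bijection in Proposition~\ref{retic}\,(\ref{retic1}). You have simply spelled out in more detail what the paper's proof leaves implicit.
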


\begin{proof} (\ref{transferad1}) and (\ref{transferad2}) follow from Proposition \ref{retic}, (\ref{retic3}).

\noindent (\ref{transferad3}) Proposition \ref{retic}, (\ref{retic3}), and Remark \ref{intersarb} give us the first equality, from which the second follows by Proposition \ref{retic}, (\ref{retic1}).\end{proof}

For any residuated lattice $B$, whose reticulation is $(L(B),\mu )$, let us denote ${\cal L}(B)=L(B)$. If $B$ and $C$ are arbitrary residuated lattices, whose reticulations are $(L(B),\mu )$ and $(L(C),\nu )$, respectively, and $f:B\rightarrow C$ is a morphism of residuated lattices, then let us denote by ${\cal L}(f):{\cal L}(B)\rightarrow {\cal L}(C)$ the mapping defined this way: for all $b\in B$, ${\cal L}(f)(\mu (b))=\nu (f(b))$. The surjectivity of $\mu $ (see condition $(4)$ above) ensures us that ${\cal L}(f):{\cal L}(B)\rightarrow {\cal L}(C)$ is completely defined by the equality above.

\begin{proposition}{\rm \cite{eu1}, \cite{eu2}}
\begin{enumerate}
\item\label{fctor1} ${\cal L}(f)$ above is well defined and it is a bounded lattice morphism;
\item\label{fctor2} with the definition above, ${\cal L}$ becomes a covariant functor from the category of residuated lattices to the category of bounded distributive lattices.
\end{enumerate}\label{fctor}\end{proposition}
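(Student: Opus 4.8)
The plan is to verify the three assertions in turn — well-definedness of ${\cal L}(f)$, the fact that it is a bounded lattice morphism, and functoriality — each of which reduces to a short diagram chase once we exploit the surjectivity of the reticulation maps (condition $(4)$ of Definition \ref{reticulatia}) together with Proposition \ref{retic}. Keeping the notation $(L(B),\mu )$, $(L(C),\nu )$ for the reticulations of $B$ and $C$, recall that, since the rule ${\cal L}(f)(\mu (b))=\nu (f(b))$ ranges over all of ${\cal L}(B)$ by surjectivity of $\mu $, the only thing to check for well-definedness is the absence of ambiguity: if $b_1,b_2\in B$ satisfy $\mu (b_1)=\mu (b_2)$, then $\nu (f(b_1))=\nu (f(b_2))$. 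By Proposition \ref{retic}, (\ref{retic0,5}), $\mu (b_1)=\mu (b_2)$ is equivalent to $[b_1)=[b_2)$, so there are $m,n\in \N ^*$ with $b_1^m\leq b_2$ and $b_2^n\leq b_1$. Since a residuated lattice morphism preserves $\odot $ and is order-preserving (it preserves $\wedge $), we get $f(b_1)^m=f(b_1^m)\leq f(b_2)$ and $f(b_2)^n=f(b_2^n)\leq f(b_1)$, hence $[f(b_1))=[f(b_2))$ in $C$; applying Proposition \ref{retic}, (\ref{retic0,5}) once more gives $\nu (f(b_1))=\nu (f(b_2))$, as wanted.

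For the morphism property, I would use that every element of $L(B)$ is of the form $\mu (a)$ and compute, for $a,b\in B$, ${\cal L}(f)(\mu (a)\vee \mu (b))={\cal L}(f)(\mu (a\vee b))=\nu (f(a\vee b))=\nu (f(a)\vee f(b))=\nu (f(a))\vee \nu (f(b))={\cal L}(f)(\mu (a))\vee {\cal L}(f)(\mu (b))$, invoking condition $(2)$ of Definition \ref{reticulatia} for $\mu $ and for $\nu $ together with the fact that $f$ preserves $\vee $. The computation for $\wedge $ is identical, using this time Proposition \ref{retic}, (\ref{retic0}) (which gives $\mu (a\wedge b)=\mu (a)\wedge \mu (b)$, and likewise for $\nu $) and the fact that $f$ preserves $\wedge $. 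For the bounds, ${\cal L}(f)(0)={\cal L}(f)(\mu (0))=\nu (f(0))=\nu (0)=0$ and, symmetrically, ${\cal L}(f)(1)=1$, using condition $(3)$ and that $f$ preserves $0$ and $1$. This settles assertion (1).

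For assertion (2), the identity case is immediate: ${\cal L}({\rm id}_B)(\mu (b))=\mu ({\rm id}_B(b))=\mu (b)$ for all $b\in B$, so ${\cal L}({\rm id}_B)={\rm id}_{{\cal L}(B)}$ by surjectivity of $\mu $. For composition, given $f:B\rightarrow C$ and $g:C\rightarrow D$ with reticulations $(L(B),\mu )$, $(L(C),\nu )$, $(L(D),\rho )$, I would check, for every $b\in B$, that ${\cal L}(g\circ f)(\mu (b))=\rho (g(f(b)))={\cal L}(g)(\nu (f(b)))={\cal L}(g)({\cal L}(f)(\mu (b)))=({\cal L}(g)\circ {\cal L}(f))(\mu (b))$, whence ${\cal L}(g\circ f)={\cal L}(g)\circ {\cal L}(f)$ again by surjectivity of $\mu $. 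Together with (1) and the observation that ${\cal L}$ sends each residuated lattice $B$ to the bounded distributive lattice ${\cal L}(B)=L(B)$, this gives the claimed covariant functor.

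I expect the only step requiring anything beyond routine unwinding to be the well-definedness argument, and even there the work is carried entirely by Proposition \ref{retic}, (\ref{retic0,5}) and by the preservation of $\odot $ and of the order under residuated lattice morphisms; so I would regard that verification as the (mild) crux, everything else being formal.
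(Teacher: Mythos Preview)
Your argument is correct. The paper itself does not supply a proof of this proposition: it is stated there with the citations \cite{eu1}, \cite{eu2} and no argument is given, since the result is imported from those references. Your verification is exactly the standard one and uses precisely the tools the paper makes available, namely surjectivity of the reticulation map (condition $(4)$ of Definition \ref{reticulatia}), the characterization $\lambda (a)=\lambda (b)\Leftrightarrow [a)=[b)$ from Proposition \ref{retic}, (\ref{retic0,5}), and conditions $(2)$, $(3)$ together with Proposition \ref{retic}, (\ref{retic0}) for the bounded lattice morphism part. There is nothing to add.
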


${\cal L}$ is called {\em the reticulation functor} (\cite{eu1}).

\begin{remark}
If we denote by $\lambda :A\rightarrow {\rm PFilt}(A)$ the function defined by: $\lambda (a)=[a)$ for all $a\in A$, then $(({\rm PFilt}(A),\cap ,\vee ,A,[1)),\lambda )$ (the dual of the bounded distributive lattice ${\rm PFilt}(A)$, with the canonical surjection from $A$ to ${\rm PFilt}(A)$) is a reticulation of $A$.

For this ane one more construction of the reticulation of a residuated lattice, see \cite{eu1}.\label{constrretic}\end{remark}

\begin{proposition}
Let $F$ be a filter of $A$. Then the following are equivalent:

\begin{enumerate}
\item\label{blpretic1} the filter $F$ has BLP in $A$;
\item\label{blpretic2} the filter $\lambda (F)$ has BLP in ${\cal L}(A)$.
\end{enumerate}\label{blpretic}\end{proposition}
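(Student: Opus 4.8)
The plan is to transport the two defining equalities of the BLP through the reticulation, using the compatibility of the reticulation with the Boolean center functor ${\cal B}$ (Proposition \ref{retic}, (\ref{retic4})) and with quotients (Proposition \ref{retic}, (\ref{retic5})). First I would record the two reformulations. By Remark \ref{cealincl}, (\ref{cealincl1}), the filter $F$ has BLP in $A$ iff ${\cal B}(A/F)={\cal B}(A)/F$, the inclusion $\supseteq$ being automatic. The analogous trivial inclusion ${\cal B}({\cal L})/G\subseteq {\cal B}({\cal L}/G)$ holds for any filter $G$ of any bounded distributive lattice ${\cal L}$ (if $f$ has complement $f'$, then $f/G$ has complement $f'/G$), so $\lambda (F)$ has BLP in ${\cal L}(A)=L(A)$ iff ${\cal B}(L(A)/\lambda (F))={\cal B}(L(A))/\lambda (F)$.

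Next I would build a Boolean isomorphism $\Phi $ between the two ``ambient'' Boolean algebras ${\cal B}(A/F)$ and ${\cal B}(L(A)/\lambda (F))$. Let $(L(A/F),\lambda _F)$ be the reticulation of $A/F$. Proposition \ref{retic}, (\ref{retic4}), applied to $A/F$, gives a Boolean isomorphism $\lambda _F\mid _{{\cal B}(A/F)}:{\cal B}(A/F)\rightarrow {\cal B}(L(A/F))$, and Proposition \ref{retic}, (\ref{retic5}), gives a bounded lattice isomorphism $\psi :L(A/F)\rightarrow L(A)/\lambda (F)$ determined by $\psi (\lambda _F(a/F))=\lambda (a)/\lambda (F)$ for all $a\in A$; being a bounded lattice isomorphism, $\psi $ restricts to a Boolean isomorphism ${\cal B}(L(A/F))\rightarrow {\cal B}(L(A)/\lambda (F))$. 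Set $\Phi =(\psi \mid _{{\cal B}(L(A/F))})\circ (\lambda _F\mid _{{\cal B}(A/F)})$; this is a Boolean isomorphism ${\cal B}(A/F)\rightarrow {\cal B}(L(A)/\lambda (F))$, and for every $e\in {\cal B}(A)$ it satisfies $\Phi (e/F)=\psi (\lambda _F(e/F))=\lambda (e)/\lambda (F)$.

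Finally I would check that $\Phi $ carries the subset ${\cal B}(A)/F$ onto ${\cal B}(L(A))/\lambda (F)$: indeed $\Phi ({\cal B}(A)/F)=\{\lambda (e)/\lambda (F)\ |\ e\in {\cal B}(A)\}$, and since $\lambda \mid _{{\cal B}(A)}:{\cal B}(A)\rightarrow {\cal B}(L(A))$ is surjective by Proposition \ref{retic}, (\ref{retic4}), this set equals $\{f/\lambda (F)\ |\ f\in {\cal B}(L(A))\}={\cal B}(L(A))/\lambda (F)$. So $\Phi $ is a bijection that maps ${\cal B}(A/F)$ onto ${\cal B}(L(A)/\lambda (F))$ and its subset ${\cal B}(A)/F$ onto ${\cal B}(L(A))/\lambda (F)$; hence the equality ${\cal B}(A/F)={\cal B}(A)/F$ holds if and only if ${\cal B}(L(A)/\lambda (F))={\cal B}(L(A))/\lambda (F)$. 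Combined with the two reformulations from the first paragraph, this is exactly the asserted equivalence (\ref{blpretic1})$\Leftrightarrow $(\ref{blpretic2}).

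I expect the only delicate point to be the bookkeeping in the second step: verifying that the isomorphism of Proposition \ref{retic}, (\ref{retic5}), restricts properly to Boolean centers and that the composite $\Phi $ genuinely acts by $e/F\mapsto \lambda (e)/\lambda (F)$ on classes of Boolean elements; the rest is a direct application of the listed functorial properties of the reticulation. One could equivalently phrase the whole argument as the commutativity of the square whose vertical arrows are ${\cal B}(p_F):{\cal B}(A)\rightarrow {\cal B}(A/F)$ and the canonical surjection ${\cal B}(L(A))\rightarrow {\cal B}(L(A)/\lambda (F))$ and whose horizontal arrows are the Boolean isomorphisms $\lambda \mid _{{\cal B}(A)}$ and $\Phi $, so that one vertical arrow is surjective iff the other is; by Remark \ref{cealincl}, (\ref{cealincl1}), surjectivity of ${\cal B}(p_F)$ is exactly the BLP of $F$, and the same for $\lambda (F)$, yielding the same conclusion.
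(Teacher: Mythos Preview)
Your proposal is correct and follows essentially the same route as the paper: both arguments combine the Boolean isomorphism $\lambda\mid_{{\cal B}(A)}$ from Proposition~\ref{retic},~(\ref{retic4}), with the quotient compatibility isomorphism of Proposition~\ref{retic},~(\ref{retic5}), to reduce the question to whether ${\cal B}(p_F)$ is surjective iff ${\cal B}(p_{\lambda(F)})$ is surjective. In fact your closing paragraph, phrasing everything via the commutative square with vertical arrows ${\cal B}(p_F)$ and ${\cal B}(p_{\lambda(F)})$ and horizontal Boolean isomorphisms, is exactly how the paper organizes its proof.
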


\begin{proof} Let $({\cal L}(A),\lambda )$ be the reticulation of $A$, $({\cal L}(A/F),\lambda _F)$ be the reticulation of $A/F$, and let $p_F:A\rightarrow A/F$ and $p_{\lambda (F)}:{\cal L}(A)\rightarrow {\cal L}(A)/\lambda (F)$ be the canonical surjections. Let $\varphi :{\cal L}(A/F)\rightarrow {\cal L}(A)/\lambda (F)$, defined by: for every $a\in A$, $\varphi (\lambda _F(a/F))=\lambda (a)/\lambda (F)$. According to Proposition \ref{retic}, (\ref{retic5}), $\varphi $ is well--defined and it is a bounded lattice isomorphism, hence its image through the functor ${\cal B}$ is a Boolean isomorphism. The fact that ${\cal L}$ is a covariant functor ensures us that the following diagrams are commutative:

\begin{center}
\begin{picture}(180,55)(0,0)
\put(20,35){$A$}
\put(80,35){${\cal L}(A)$}
\put(28,39){\vector(1,0){49}}
\put(49,41){$\lambda $}
\put(34,9){\vector(1,0){37}}
\put(90,32){\vector(0,-1){18}}
\put(103,36){\vector(3,-1){60}}
\put(49,12){$\lambda _F$}
\put(23,33){\vector(0,-1){20}}
\put(10,23){$p_F$}
\put(134,30){$p_{\lambda (F)}$}
\put(63,23){${\cal L}(p_F)$}
\put(14,5){$A/F$}
\put(74,5){${\cal L}(A/F)$}
\put(140,5){${\cal L}(A)/\lambda (F)$}
\put(110,9){\vector(1,0){28}}
\put(121,10){$\sim $}
\put(122,17){$\varphi $}
\end{picture}
\end{center}

If we apply the two functors ${\cal B}$ (the one from the category of residuated lattices to the category of Boolean algebras to the left of the diagram above in the left side, and the one from the category of bounded distributive lattices to the category of Boolean algebras to all of the diagram above in the right side, we get the following commutative diagrams in the category of Boolean algebras, in which $\lambda \mid _{\textstyle {\cal B}(A)}$ and $\lambda _F\! \mid _{\textstyle {\cal B}(A/F)}$ are Boolean isomorphisms, according to Proposition \ref{retic}, (\ref{retic4}): 

\begin{center}
\begin{picture}(250,55)(0,0)
\put(24,35){${\cal B}(A)$}
\put(120,35){${\cal B}({\cal L}(A))$}
\put(47,39){\vector(1,0){70}}
\put(59,45){$\lambda \mid _{\textstyle {\cal B}(A)}$}
\put(54,9){\vector(1,0){54}}
\put(135,32){\vector(0,-1){18}}
\put(157,34){\vector(3,-1){60}}

\put(49,16){$\lambda _F\! \mid _{\textstyle {\cal B}(A/F)}$}
\put(30,33){\vector(0,-1){20}}
\put(0,23){${\cal B}(p_F)$}
\put(177,30){${\cal B}(p_{\lambda (F)})$}
\put(93,23){${\cal B}({\cal L}(p_F))$}
\put(18,5){${\cal B}(A/F)$}
\put(110,5){${\cal B}({\cal L}(A/F))$}
\put(180,5){${\cal B}({\cal L}(A)/\lambda (F))$}
\put(160,9){\vector(1,0){19}}
\put(166,10){$\sim $}
\put(162,17){${\cal B}(\varphi )$}
\end{picture}

\end{center}

The commutativity of these diagrams shows that ${\cal B}(\varphi )\circ \lambda _F\! \mid _{\textstyle {\cal B}(A/F)}\circ \, \, {\cal B}(p_F)={\cal B}(p_{\lambda (F)})\circ \lambda \mid _{\textstyle {\cal B}(A)}$. This equality and the fact that ${\cal B}(\varphi )$, $\lambda _F\! \mid _{\textstyle {\cal B}(A/F)}$ and $\lambda \mid _{\textstyle {\cal B}(A)}$ are bijective show that ${\cal B}(p_F)$ is surjective iff ${\cal B}(p_{\lambda (F)})$ is surjective, hence $F$ has BLP in $A$ iff $\lambda (F)$ has BLP in ${\cal L}(A)$, according to Remark \ref{cealincl}, (\ref{cealincl1}).\end{proof}

\begin{proposition}
The following are equivalent:

\begin{enumerate}
\item\label{blptotretic1} $A$ has BLP;

\item\label{blptotretic2} ${\cal L}(A)$ has BLP.\end{enumerate}\label{blptotretic}\end{proposition}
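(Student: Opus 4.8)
The plan is to derive this equivalence directly from Proposition \ref{blpretic} together with the fact, recorded in Proposition \ref{retic}, (\ref{retic1}), that the assignment $F\mapsto \lambda(F)$ is a bijection (indeed a bounded lattice isomorphism) between ${\rm Filt}(A)$ and ${\rm Filt}({\cal L}(A))$, with inverse the restriction of $\lambda^*$. Recall that, by definition, $A$ has BLP iff every filter of $A$ has BLP in $A$, and, likewise, ${\cal L}(A)$ has BLP iff every filter of ${\cal L}(A)$ has BLP in ${\cal L}(A)$ (see Example \ref{blatex}). So the whole argument amounts to quantifying the single-filter transfer statement of Proposition \ref{blpretic} over all filters on each side, which is legitimate precisely because the two lattices of filters are matched up bijectively by $\lambda(\cdot)$.

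For the implication (\ref{blptotretic1})$\Rightarrow$(\ref{blptotretic2}), I would take an arbitrary filter $G$ of ${\cal L}(A)$ and use Proposition \ref{retic}, (\ref{retic1}), to write $G=\lambda(F)$, where $F=\lambda^*(G)\in{\rm Filt}(A)$. Since $A$ has BLP, the filter $F$ has BLP in $A$, so Proposition \ref{blpretic} yields that $\lambda(F)=G$ has BLP in ${\cal L}(A)$. As $G$ was arbitrary, ${\cal L}(A)$ has BLP.

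For the converse (\ref{blptotretic2})$\Rightarrow$(\ref{blptotretic1}), I would take an arbitrary filter $F$ of $A$. Then $\lambda(F)$ is a filter of ${\cal L}(A)$ by Proposition \ref{retic}, (\ref{retic1}), hence it has BLP in ${\cal L}(A)$ by hypothesis; Proposition \ref{blpretic} then gives that $F$ has BLP in $A$. Since $F$ was arbitrary, $A$ has BLP.

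I do not expect any real obstacle here: the substantive content -- the functorial transfer of BLP through the reticulation at the level of a single filter -- has already been established in Proposition \ref{blpretic}, and the only additional ingredient is the bijection between the filter lattices of $A$ and ${\cal L}(A)$. In fact one could compress the entire proof into the single observation that the order isomorphism $F\leftrightarrow\lambda(F)$ of Proposition \ref{retic}, (\ref{retic1}), restricts to a bijection between the filters of $A$ with BLP and the filters of ${\cal L}(A)$ with BLP, so that one family is all of ${\rm Filt}(A)$ exactly when the other is all of ${\rm Filt}({\cal L}(A))$.
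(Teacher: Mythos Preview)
Your proof is correct and follows exactly the same approach as the paper's own proof, which simply cites Proposition \ref{retic}, (\ref{retic1}), and Proposition \ref{blpretic}. You have merely spelled out in detail what the paper leaves as an immediate consequence of those two results.
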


\begin{proof} By Proposition \ref{retic}, (\ref{retic1}), and Proposition \ref{blpretic}.\end{proof}

\begin{remark}
Proposition \ref{blptotretic} shows that the reticulation functor ${\cal L}$ both preserves and reflects BLP.\end{remark}

The following two results are \cite[Lemma $5.1.4$ and Proposition $5.1.5$]{eu}. 

\begin{lemma}
Let $({\cal L}(A),\lambda )$ be the reticulation of $A$ and $a\in A$. Then: $a$ is archimedean iff $\lambda (a)\in {\cal B}({\cal L}(A))$.\label{elemarhim}\end{lemma}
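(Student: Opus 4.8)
The plan is to characterize the archimedean elements of $A$ purely in terms of the reticulation, using the defining property $(5)$ of $\lambda$ and the fact (Proposition \ref{retic}, (\ref{retic4})) that $\lambda\!\mid_{{\cal B}(A)}:{\cal B}(A)\to{\cal B}({\cal L}(A))$ is a Boolean isomorphism. Recall that $a\in A$ is archimedean iff $a^n\in{\cal B}(A)$ for some $n\in\N^*$, and recall from Lemma \ref{cbool}, (\ref{cbool1}), that $b\in{\cal B}(A)$ iff $b\vee\neg\, b=1$; the analogous characterization holds in the bounded distributive lattice ${\cal L}(A)$, where an element is Boolean iff it is complemented. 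Also note Proposition \ref{retic}, (\ref{retic0,7}): $\lambda(a^n)=\lambda(a)$ for all $n\in\N^*$.

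First I would prove the forward implication. Suppose $a$ is archimedean, so $a^n\in{\cal B}(A)$ for some $n\in\N^*$. By Proposition \ref{retic}, (\ref{retic4}), $\lambda(a^n)\in{\cal B}({\cal L}(A))$, and by Proposition \ref{retic}, (\ref{retic0,7}), $\lambda(a^n)=\lambda(a)$; hence $\lambda(a)\in{\cal B}({\cal L}(A))$. For the converse, suppose $\lambda(a)\in{\cal B}({\cal L}(A))$. Since $\lambda\!\mid_{{\cal B}(A)}$ is onto ${\cal B}({\cal L}(A))$, there is some $e\in{\cal B}(A)$ with $\lambda(e)=\lambda(a)$. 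By Proposition \ref{retic}, (\ref{retic0,5}), $\lambda(e)=\lambda(a)$ forces $[e)=[a)$, which (by the form of principal filters) means there exist $m,n\in\N^*$ with $a^m\leq e$ and $e^n\leq a$, and since $e$ is idempotent this simplifies to $a^m\leq e\leq a$. Now the aim is to show $a^{2m}\in{\cal B}(A)$: using Lemma \ref{cbool}, (\ref{cbool1}), I would check $a^{2m}\vee\neg\, a^{2m}=1$. From $a^m\leq e$ we get $\neg\, e\leq\neg\, a^m$ (Lemma \ref{latrez}, (\ref{latrez7})); from $e\leq a$ and monotonicity of $\odot$ (Lemma \ref{latrez}, (\ref{latrez6})) we get $e=e^2\leq a^2$, and more generally $e\leq a^m$, so $e\leq a^m$; combining, $1=e\vee\neg\, e\leq a^m\vee\neg\, a^m$. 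But $a^m\leq\neg\,\neg\, a^m$, and one also has $a^{2m}\leq a^m$; a cleaner route is: since $e\leq a^m\leq a$ and $a^m\leq e$, all three generate the same filter, and from $a^m\leq e$ with $e\vee\neg\,e=1$ one deduces $a^m\vee\neg\,a^m\geq e\vee\neg\,e=1$ (using $\neg\, a^m\geq\neg\, e$ wait — this needs $a^m\le e$ giving $\neg e\le\neg a^m$, correct), hence $a^m\vee\neg\, a^m=1$, so $a^m\in{\cal B}(A)$ by Lemma \ref{cbool}, (\ref{cbool1}), and therefore $a$ is archimedean.

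The main obstacle is the converse direction: one must extract an \emph{actual} Boolean power of $a$ in $A$ from the mere fact that $\lambda(a)$ is complemented in ${\cal L}(A)$. The surjectivity of $\lambda\!\mid_{{\cal B}(A)}$ only gives a Boolean $e$ with the \emph{same image} as $a$, and the bridge back to $a$ is the filter-equality $[e)=[a)$ from Proposition \ref{retic}, (\ref{retic0,5}); squeezing out a genuine inequality $a^m\leq e$ and then verifying $a^m\vee\neg\, a^m=1$ via Lemma \ref{cbool}, (\ref{cbool1}) and the negation-monotonicity in Lemma \ref{latrez}, (\ref{latrez7}), is the only place where real (though short) residuated-lattice computation is needed; everything else is bookkeeping with the functorial properties already listed in Proposition \ref{retic}.
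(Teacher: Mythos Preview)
Your proof is correct and follows essentially the same approach as the paper: for the converse, both you and the paper pull back a Boolean element $e\in{\cal B}(A)$ with $\lambda(e)=\lambda(a)$ via the isomorphism in Proposition~\ref{retic},~(\ref{retic4}), then use condition~$(5)$ (equivalently Proposition~\ref{retic},~(\ref{retic0,5})) to obtain $a^m\leq e$ and $e\leq a$, and finally exploit the idempotence of $e$ to conclude that some power of $a$ is Boolean. The paper streamlines the last step by observing directly that $e\leq a$ gives $e=e^m\leq a^m$, whence $a^m=e\in{\cal B}(A)$; your detour through the criterion $a^m\vee\neg\,a^m=1$ works but is unnecessary once you have both $a^m\leq e$ and $e\leq a^m$.
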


\begin{proof}
``$\Rightarrow $:`` If $a$ is archimedean, then there exists an $n\in \N ^*$ such that $a^n\in {\cal B}(A)$, hence, by Proposition \ref{retic}, (\ref{retic0,7}), $\lambda (a)=\lambda (a^n)\in {\cal B}({\cal L}(A))$.

\noindent ``$\Leftarrow $:`` Conversely, let us assume that $\lambda (a)\in {\cal B}({\cal L}(A))$. Proposition \ref{retic}, (\ref{retic4}), ensures us that there exists $b\in {\cal B}(A)$ such that $\lambda (a)=\lambda (b)$. By condition $(5)$, $\lambda (a)=\lambda (b)$ iff $\lambda (a)\leq \lambda (b)$ and $\lambda (b)\leq \lambda (a)$ iff $a^n\leq b$ and $b^k\leq a$ for some $n,k\in \N ^*$. By Lemma \ref{cbool}, (\ref{cbool3}), and Lemma \ref{latrez}, (\ref{latrez6}), $b^k=b$, hence $b\leq a$, so $b=b^n\leq a^n$, therefore $a^n\leq b$ and $b\leq a^n$, so $a^n=b\in {\cal B}(A)$, thus $a$ is archimedean.\end{proof}

\begin{proposition}
$A$ is hyperarchimedean iff ${\cal L}(A)$ is a Boolean algebra.\label{hiperarhbool}\end{proposition}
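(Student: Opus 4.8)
The plan is to chain together the characterization of archimedean elements via the reticulation (Lemma \ref{elemarhim}) with the surjectivity of $\lambda $, reducing everything to the observation that a bounded distributive lattice is Boolean exactly when every one of its elements is complemented.

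First I would record the elementary fact that, since $L(A)$ is already a bounded distributive lattice, ${\cal L}(A)$ is a Boolean algebra if and only if ${\cal B}({\cal L}(A))=L(A)$, i.e. every element of $L(A)$ is complemented. This is immediate from the definition of the Boolean center and the fact that a Boolean algebra is precisely a complemented distributive lattice.

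Next, for the forward implication, assume $A$ is hyperarchimedean. Let $x\in L(A)$ be arbitrary. By surjectivity of $\lambda $ (condition $(4)$ in Definition \ref{reticulatia}), there is $a\in A$ with $\lambda (a)=x$. Since $a$ is archimedean, Lemma \ref{elemarhim} gives $x=\lambda (a)\in {\cal B}({\cal L}(A))$. As $x$ was arbitrary, ${\cal B}({\cal L}(A))=L(A)$, so ${\cal L}(A)$ is a Boolean algebra. Conversely, if ${\cal L}(A)$ is a Boolean algebra, then for every $a\in A$ we have $\lambda (a)\in L(A)={\cal B}({\cal L}(A))$, whence $a$ is archimedean by Lemma \ref{elemarhim}; since $a$ was arbitrary, $A$ is hyperarchimedean.

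There is no real obstacle here: the content is entirely in Lemma \ref{elemarhim}, which is already available, and the only point requiring a moment's care is using the surjectivity of $\lambda $ to pass from ``$\lambda (a)$ is complemented for all $a\in A$'' to ``every element of $L(A)$ is complemented''. I would present the argument as a short two-direction proof (or, if brevity is desired, as a single biconditional chain invoking Lemma \ref{elemarhim} and condition $(4)$).
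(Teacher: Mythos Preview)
Your proof is correct and follows exactly the same approach as the paper: the paper's proof is the one-line remark ``By Lemma \ref{elemarhim} and the surjectivity of $\lambda$ in the reticulation $({\cal L}(A),\lambda )$ of $A$'', and you have simply unpacked this in full detail.
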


\begin{proof}
By Lemma \ref{elemarhim} and the surjectivity of $\lambda $ in the reticulation $({\cal L}(A),\lambda )$ of $A$.\end{proof}

The next corollary is part of \cite[Corollary $4.18$]{ggcm}, but here we shall obtain it by means of the reticulation.

\begin{corollary}
Any hyperarchimedean residuated lattice has BLP.\end{corollary}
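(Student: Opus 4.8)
The plan is to obtain this corollary by chaining together the two functorial transfer results just established with the trivial observation that a Boolean algebra has BLP. First I would invoke Proposition \ref{hiperarhbool}: since $A$ is hyperarchimedean, its reticulation ${\cal L}(A)$ is a Boolean algebra. Next, recall from Example \ref{blatex} that whenever ${\cal B}({\cal L})=L$ --- that is, whenever the bounded distributive lattice ${\cal L}$ coincides with its Boolean center, equivalently ${\cal L}$ is a Boolean algebra --- then ${\cal L}$ has BLP, because in that case ${\cal B}({\cal L}/F)={\cal L}/F={\cal B}({\cal L})/F$ for every filter $F$ of ${\cal L}$. Applying this to ${\cal L}={\cal L}(A)$ yields that ${\cal L}(A)$ has BLP. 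Finally, Proposition \ref{blptotretic} asserts that $A$ has BLP iff ${\cal L}(A)$ has BLP; in particular the implication \ref{blptotretic2}$\Rightarrow$\ref{blptotretic1} (the ``reflects BLP'' half) gives that $A$ has BLP.

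I do not expect any genuine obstacle, since all the substantive work has already been absorbed into Propositions \ref{hiperarhbool} and \ref{blptotretic}. The only points requiring minimal care are notational: one must read BLP for ${\cal L}(A)$ in the sense fixed in Example \ref{blatex}, namely the $(\varphi,{\rm Filt}(\cdot))$--Lifting Property, which is precisely the convention under which Proposition \ref{blptotretic} is phrased, so no reconciliation of definitions is needed; and one should observe that only the ``reflects'' direction of Proposition \ref{blptotretic} is used here.

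For completeness I would also note the direct alternative, which bypasses the reticulation: given $x\in A$, choose $n\in\N^*$ with $x^n\in{\cal B}(A)$ and set $e=x^n$. Then $e\in{\cal B}(A)$ and $e\in [x^n)=[x)$; moreover $x^n\leq x$ (Lemma \ref{latrez}, (\ref{latrez6})) forces $\neg\, x\leq \neg\, x^n$ (Lemma \ref{latrez}, (\ref{latrez7})), so $\neg\, e=\neg\, x^n\in [\neg\, x)$. Hence $e$ witnesses condition \ref{propblp2} of Proposition \ref{propblp}, and therefore $A$ has BLP. Either route is essentially a one-line argument; I would present the reticulation-based one, in keeping with the stated aim of illustrating the functor.
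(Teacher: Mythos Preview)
Your proposal is correct and follows exactly the paper's own argument: invoke Proposition \ref{hiperarhbool} to get that ${\cal L}(A)$ is Boolean, use Example \ref{blatex} to conclude ${\cal L}(A)$ has BLP, and then apply Proposition \ref{blptotretic} to transfer BLP back to $A$. Your alternative direct proof via Proposition \ref{propblp} is also valid and is a nice supplement, though the paper presents only the reticulation route.
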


\begin{proof} By Proposition \ref{hiperarhbool}, $A$ is hyperarchimedean iff ${\cal L}(A)$ is a Boolean algebra and hence a bounded distributive lattice with BLP according to Example \ref{blatex}. The result now follows from Proposition \ref{blptotretic}.\end{proof}

\section{Topological Characterization for the Boolean Lifting Property}
\label{topchar}

In this section we study topological properties related to BLP in residuated lattices. The main results we obtain here are two characterization theorems for residuated lattices with BLP: Theorems \ref{p5.2} and \ref{p5.3}. Theorem \ref{p5.2} below corresponds to \cite[Theorem $3.10$]{sepe}, while Theorem \ref{p5.3} has been inspired by \cite[Theorem $1.7$]{mcgov} and the idea for Theorem \ref{p5.3} has also originated in \cite{mcgov}, in a discussion that starts on page $247$. It turns out that the study of residuated lattices with BLP is closely related to a larger class of residuated lattices, namely the ones with Gelfand property: any prime filter is included in a unique maximal filter. 

For any topological space $X$, we shall denote by ${\rm Clp}(X)$ the family of all clopen sets of $X$.

Throughout the rest of this section, $A$ will be an arbitrary residuated latice and $X$ will be an arbitrary topological space, unless mentioned otherwise.

The previous results hold if we replace $A$ by a bounded distributive lattice and $\odot $ by $\wedge $.

We recall that $X$ is said to be:

\begin{itemize}
\item {\em $T_0$} iff it satisfies the {\em $T_0$--separation axiom:} for any $x,y\in X$ with $x\neq y$, there exists an open set $D$ of $X$ such that either $x\in D$ and $y\notin D$, or $y\in D$ and $x\notin D$;
\item {\em $T_1$} iff, for every $x\in X$, the set $\{x\}$ is closed in $X$;
\item {\em zero--dimensional} iff $X$ has a basis of clopen sets; 
\item {\em strongly zero--dimensional} iff, given any closed set $T$ and any open set $V$ of $X$ with $T\subseteq V$, there exists a clopen set $U$ of $X$ such that $T\subseteq U\subseteq V$ (according to \cite{sepe});
\item {\em normal} iff any two disjoint closed sets of $X$ can be separated through open sets of $X$ (that is, for any two closed sets $C$ and $D$ of $X$ such that $C\cap D=\emptyset $, there exist two open sets $U$ and $V$ of $X$ such that $C\subseteq U$, $D\subseteq V$ and $U\cap V=\emptyset $);
\item {\em Boolean} iff $X$ is compact, Hausdorff and zero--dimensional.\end{itemize}

Clearly, any $T_1$ compact normal space is Hausdorff, thus any $T_1$ compact normal zero--dimensional space is Boolean.

${\rm Spec}(A)$ is a $T_0$ compact topological space, while ${\rm Max}(A)$ is a $T_1$ compact topological space, with the Stone topology. In the particular case when $A$ is a BL--algebra, ${\rm Max}(A)$ is a compact Hausdorff space.

\begin{proposition}{\rm \cite[Lemma $2.5$]{luyu}, \cite[Lemma $3.8$]{sepe}} The following are equivalent:

\begin{enumerate}
\item\label{multcit1} $X$ is strongly zero--dimensional;
\item\label{multcit2} any two disjoint closed sets of $X$ can be separated through clopen sets of $X$;
\item\label{multcit3} if there exist two open sets $U$ and $V$ of $X$ such that $X=U\cup V$, then there exist two clopen sets $C$ and $D$ of $X$ such that $C\subseteq U$, $D\subseteq V$, $C\cap D=\emptyset $ and $C\cup D=X$.
\end{enumerate}\label{multcit}\end{proposition}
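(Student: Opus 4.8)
The plan is to prove the cycle of implications $(\ref{multcit1})\Rightarrow(\ref{multcit2})\Rightarrow(\ref{multcit3})\Rightarrow(\ref{multcit1})$. All three steps are elementary manipulations with complements, exploiting the single fact that the complement of a clopen set is clopen; I do not expect any genuine obstacle here, only the need to keep careful track of which of the two complementary clopen pieces gets assigned to which of the two sets being separated.

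First, for $(\ref{multcit1})\Rightarrow(\ref{multcit2})$, I would start from disjoint closed sets $C$ and $D$ of $X$, observe that $C$ is contained in the open set $X\setminus D$, and invoke strong zero--dimensionality (in the form recalled before the statement) to obtain a clopen set $U$ with $C\subseteq U\subseteq X\setminus D$; then $U$ and $X\setminus U$ are clopen, disjoint, and contain $C$ and $D$ respectively, which is the required separation by clopen sets.

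Next, for $(\ref{multcit2})\Rightarrow(\ref{multcit3})$, given open sets $U,V$ with $X=U\cup V$, the sets $X\setminus U$ and $X\setminus V$ are closed and $(X\setminus U)\cap(X\setminus V)=X\setminus(U\cup V)=\emptyset$; applying $(\ref{multcit2})$ gives disjoint clopen sets $P\supseteq X\setminus U$ and $Q\supseteq X\setminus V$. I would then set $C:=X\setminus P$ and $D:=P$, which are clopen, satisfy $C\cap D=\emptyset$ and $C\cup D=X$, and obey $C\subseteq U$ (from $X\setminus U\subseteq P$) and $D\subseteq V$ (from $P\cap Q=\emptyset$ together with $X\setminus V\subseteq Q$).

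Finally, for $(\ref{multcit3})\Rightarrow(\ref{multcit1})$, given a closed set $T$ contained in an open set $W$, I would apply $(\ref{multcit3})$ to the pair of open sets $W$ and $X\setminus T$, which cover $X$ precisely because $T\subseteq W$; this yields clopen sets $C\subseteq W$ and $D\subseteq X\setminus T$ with $C\cap D=\emptyset$ and $C\cup D=X$. The last two relations force $C=X\setminus D$, and $D\subseteq X\setminus T$ forces $T\subseteq X\setminus D=C$, so $T\subseteq C\subseteq W$ with $C$ clopen, which is exactly strong zero--dimensionality. The whole argument transfers verbatim to a bounded distributive lattice in place of $A$ (with $\odot$ replaced by $\wedge$), since only the topology of $X$ enters.
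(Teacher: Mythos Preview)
Your cycle $(\ref{multcit1})\Rightarrow(\ref{multcit2})\Rightarrow(\ref{multcit3})\Rightarrow(\ref{multcit1})$ is correct and complete; each step is exactly the complement-swapping you describe, and the bookkeeping is right (in particular, in $(\ref{multcit2})\Rightarrow(\ref{multcit3})$ the inclusion $D=P\subseteq V$ follows cleanly from $P\cap Q=\emptyset$ and $X\setminus V\subseteq Q$, and in $(\ref{multcit3})\Rightarrow(\ref{multcit1})$ the equality $C=X\setminus D$ is forced by the partition condition). The paper does not supply its own proof here: the proposition is quoted from \cite[Lemma~$2.5$]{luyu} and \cite[Lemma~$3.8$]{sepe} and used as a black box, so there is no ``paper proof'' to compare against beyond those references. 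Your argument is the standard one and would be at home in either source.

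One small remark: your closing sentence about transferring the argument ``to a bounded distributive lattice in place of $A$ (with $\odot$ replaced by $\wedge$)'' is out of place. This proposition is a statement about an arbitrary topological space $X$ and makes no reference to $A$, $\odot$, or any algebraic structure; the remark you are echoing belongs to the surrounding text about spectra of residuated lattices, not to this purely topological lemma. You can simply delete that sentence.
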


An obvious consequence of the previous proposition is the fact that any strongly zero--dimensional topological space is normal. According to \cite{sepe}, a $T_1$ compact space is strongly zero--dimensional iff it is zero--dimensional, hence, furthermore: a $T_1$ compact space is zero--dimensional iff it is strongly zero--dimensional iff it is normal iff it is Boolean. Therefore:

\begin{corollary}
The topological space ${\rm Max}(A)$ is zero--dimensional iff it is strongly zero--dimensional iff it is normal iff it is Boolean.\label{coolmix}\end{corollary}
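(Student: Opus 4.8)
The plan is to deduce this immediately from the general topological equivalences recorded in the paragraph just before the statement, specialized to the space $X={\rm Max}(A)$. Recall that it was already observed at the beginning of this section that ${\rm Max}(A)$, equipped with the Stone topology, is a $T_1$ compact topological space. Since the chain of equivalences ``zero-dimensional iff strongly zero-dimensional iff normal iff Boolean'' was stated there to hold for every $T_1$ compact space, it holds in particular for ${\rm Max}(A)$, which is exactly the assertion of the corollary. So essentially no new work is needed; the proof is a single invocation of the already-established general fact.

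If one wishes to spell out why the cited equivalences hold, the steps are as follows. Boolean implies zero-dimensional by definition. Conversely, for a $T_1$ space, zero-dimensionality and strong zero-dimensionality coincide: the nontrivial direction (zero-dimensional $\Rightarrow$ strongly zero-dimensional for compact $T_1$ spaces) is the result quoted from \cite{sepe}, while strong zero-dimensionality returns zero-dimensionality because, given $x\in X$ and an open set $V$ with $x\in V$, the closed singleton $\{x\}$ can be separated from inside $V$ by a clopen set, producing a clopen neighbourhood of $x$ contained in $V$. Strong zero-dimensionality implies normality by the remark following Proposition \ref{multcit}. Finally, a $T_1$ compact normal space is Hausdorff, hence compact Hausdorff, and together with the equivalence of normality and (strong) zero-dimensionality for such spaces (again by \cite{sepe}) it is Boolean, which closes the cycle. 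Applying this to $X={\rm Max}(A)$ finishes the argument.

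There is no genuine obstacle here: all the substance resides in the already-cited general facts, so the only point requiring attention is to confirm that ${\rm Max}(A)$ actually satisfies the $T_1$ and compactness hypotheses under which those facts were stated, which it does, by the discussion at the start of this section. Were one to aim for a fully self-contained proof, the implication demanding the most care would be the passage from normality back to zero-dimensionality (and hence to Boolean), since it is precisely there that the $T_1$ separation axiom and the compactness of ${\rm Max}(A)$ are indispensable.
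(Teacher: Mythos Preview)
Your proposal is correct and takes essentially the same approach as the paper: the corollary is stated in the paper immediately after the sentence asserting that the four properties coincide for any $T_1$ compact space, and the paper's ``proof'' is literally just the word ``Therefore,'' relying on the earlier observation that ${\rm Max}(A)$ is $T_1$ and compact. Your write-up does exactly this, with the optional second paragraph merely expanding on the references the paper already cites.
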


Let $L$ be a bounded distributive lattice. Then $L$ is said to be:

\begin{itemize}
\item {\em normal} iff, for all $x,y\in L$, if $x\vee y=1$, then there exist $u,v\in L$ such that $u\wedge v=0$ and $u\vee x=v\vee y=1$;
\item {\em conormal} iff it is dually normal, that is, for all $x,y\in L$, if $x\wedge y=0$, then there exist $u,v\in L$ such that $u\vee v=1$ and $u\wedge x=v\wedge y=0$.\end{itemize}

\begin{note}
In \cite{whcor} and \cite{paw}, the denominations above are reversed. We have adopted the version of these definitions in \cite{joh}, for the reason discussed by the author on \cite[p. $78$]{joh}.\label{notaden}\end{note}

\begin{proposition}
The following are equivalent:

\begin{enumerate}
\item\label{gelfand1} the bounded distributive lattice ${\rm Filt}(A)$ is normal;
\item\label{gelfand2} the bounded distributive lattice ${\rm PFilt}(A)$ is normal, that is: for all $x,y\in A$, if $[x)\vee [y)=A$, then there exist $u,v\in A$ such that $[u)\cap [v)=\{1\}$ and $[u)\vee [x)=[v)\vee [y)=A$;
\item\label{gelfand3} the bounded distributive lattice ${\cal L}(A)$ is conormal;
\item\label{gelfand4} any prime filter of $A$ is included in a unique maximal filter of $A$;
\item\label{gelfand5} any prime filter of ${\cal L}(A)$ is included in a unique maximal filter of ${\cal L}(A)$;
\item\label{gelfand6} the inclusion ${\rm Max}(A)\subseteq {\rm Spec}(A)$ admits a continuous retract;
\item\label{gelfand7} the inclusion ${\rm Max}({\cal L}(A))\subseteq {\rm Spec}({\cal L}(A))$ admits a continuous retract;
\item\label{gelfand8} ${\rm Spec}(A)$ is a normal topological space;
\item\label{gelfand9} ${\rm Spec}({\cal L}(A))$ is a normal topological space;
\item\label{gelfand10} for any $M\in {\rm Max}(A)$, $\{P\in {\rm Spec}(A)\ |\ P\subseteq M\}$ is a closed subset of ${\rm Spec}(A)$;
\item\label{gelfand11} for any $M\in {\rm Max}({\cal L}(A))$, $\{P\in {\rm Spec}({\cal L}(A))\ |\ P\subseteq M\}$ is a closed subset of ${\rm Spec}({\cal L}(A))$;
\item\label{gelfand12} for any $M\in {\rm Max}(A)$, $M$ is the only maximal filter of $A$ which includes $\displaystyle \bigcap _{\stackrel{\scriptstyle P\in {\rm Spec}(A)}{\scriptstyle P\subseteq M}}P$;
\item\label{gelfand13} for any $M\in {\rm Max}({\cal L}(A))$, $M$ is the only maximal filter of ${\cal L}(A)$ which includes $\displaystyle \bigcap _{\stackrel{\scriptstyle P\in {\rm Spec}({\cal L}(A))}{\scriptstyle P\subseteq M}}P$;
\item\label{gelfand14} any two distinct maximal filters of $A$ can be separated by disjoint open sets in ${\rm Spec}(A)$;
\item\label{gelfand15} any two distinct maximal filters of ${\cal L}(A)$ can be separated by disjoint open sets in ${\rm Spec}({\cal L}(A))$.
\end{enumerate}\label{gelfand}\end{proposition}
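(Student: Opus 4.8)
\emph{Overall strategy.} The plan is to push the whole statement through the reticulation functor ${\cal L}$ and thereby reduce it to the classical theory of conormal bounded distributive lattices. First I would dispatch the three "filter lattice" conditions (1)--(3). For $(2)\Leftrightarrow(3)$, recall from Remark \ref{constrretic} that $(({\rm PFilt}(A))^{\rm op},\lambda )$, with $\lambda (a)=[a)$, is a reticulation of $A$; by the uniqueness part of Proposition \ref{exsiunic} it is therefore bounded--lattice isomorphic to ${\cal L}(A)$, so ${\rm PFilt}(A)$ is normal exactly when ${\cal L}(A)$ is conormal. For $(1)\Leftrightarrow(2)$ I would argue directly in $A$: for filters $F,G$ one has $F\vee G=A$ iff there are $x\in F$, $y\in G$ with $[x)\vee [y)=[x\odot y)=A$ (since $0\in F\vee G$ forces $x\odot y=0$ for some such $x,y$). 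Hence a normality witness for ${\rm PFilt}(A)$ against $[x)\vee [y)=A$ gives one for ${\rm Filt}(A)$ against $F\vee G=A$; conversely, given a normality witness $(H,K)$ for ${\rm Filt}(A)$ against $[x)\vee [y)=A$, the relation $0\in H\vee [x)$ produces $h\in H$ with $[h)\vee [x)=A$ (and similarly $k\in K$ with $[k)\vee [y)=A$), and $[h)\cap [k)\subseteq H\cap K=\{1\}$, so $(h,k)$ works in ${\rm PFilt}(A)$.

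\emph{Transfer to the reticulation.} Next I would match each "$A$--condition" with its "${\cal L}(A)$--counterpart": $(4)\Leftrightarrow(5)$, $(6)\Leftrightarrow(7)$, $(8)\Leftrightarrow(9)$, $(10)\Leftrightarrow(11)$, $(12)\Leftrightarrow(13)$, $(14)\Leftrightarrow(15)$. By Proposition \ref{retic}, (\ref{retic1}), $\lambda ^*$ restricts to an order isomorphism ${\rm Filt}({\cal L}(A))\to {\rm Filt}(A)$, and by parts (\ref{retic2}) and (\ref{retic3}) it restricts further to homeomorphisms ${\rm Spec}({\cal L}(A))\to {\rm Spec}(A)$ and ${\rm Max}({\cal L}(A))\to {\rm Max}(A)$, all mutually compatible and compatible with set inclusion; moreover $\lambda ^*$ preserves arbitrary intersections (Remark \ref{intersarb}). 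Consequently the inclusion ${\rm Max}(A)\subseteq {\rm Spec}(A)$ is, up to these identifications, the inclusion ${\rm Max}({\cal L}(A))\subseteq {\rm Spec}({\cal L}(A))$, so normality of the spectrum, existence of a continuous retract, closedness of the fibres $\{P\subseteq M\}$, separation of distinct maximal filters by disjoint opens, the property that every prime filter lies under a unique maximal filter, and the property that $M$ is the only maximal filter over $\bigcap _{P\subseteq M}P$ all correspond verbatim between the two sides (for the last one using that $\lambda ^*$ commutes with intersections and carries maximal filters bijectively onto maximal filters).

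\emph{The conormal--lattice core, and the main obstacle.} It then remains to prove the equivalences among the ${\cal L}(A)$--conditions, i.e.\ that for a bounded distributive lattice $L$ the properties ``$L$ conormal'', ``every prime filter of $L$ lies under a unique maximal filter'', ``${\rm Spec}(L)$ normal'', ``${\rm Max}(L)\hookrightarrow {\rm Spec}(L)$ has a continuous retract'', ``each fibre $\{P\in {\rm Spec}(L)\mid P\subseteq M\}$ closed'', ``$M$ unique maximal over $\bigcap _{P\subseteq M}P$'', and ``distinct maximal filters separated by disjoint opens of ${\rm Spec}(L)$'' are all equivalent. This is essentially the classical theory of conormal lattices (\cite{whcor}, \cite{joh}, \cite{paw}), modulo the terminological caveat recorded in Note \ref{notaden}; the key ingredients are that ${\rm Spec}(L)$ is a spectral space whose topological closures are the sets $V(\cdot )$ (Lemma \ref{lema1}), so the maximal filters are precisely its closed points, whence for maximal $M$ the set $\{P\subseteq M\}$ has closure $V(\bigcap _{P\subseteq M}P)$ -- which immediately yields the easy implications (closed fibre $\Rightarrow$ unique maximal over the intersection, ${\rm Spec}$ normal $\Rightarrow$ separation of maximal filters, etc.) -- together with the standard fact that conormality is exactly the condition making the maximal spectrum a Hausdorff retract of the prime spectrum. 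I expect this last block to be the main obstacle: deciding which of these equivalences are directly quotable from the conormal--lattice literature and which (most likely the continuous--retract condition, and recovering conormality from normality of the spectrum or from the separation of maximal filters) need a short self--contained spectral--space argument. By contrast, the two preceding blocks are essentially bookkeeping with the reticulation isomorphisms.
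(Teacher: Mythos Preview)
Your proposal is correct and follows essentially the same approach as the paper: prove $(1)\Leftrightarrow(2)$ directly in $A$, get $(2)\Leftrightarrow(3)$ from the fact that ${\cal L}(A)$ is the order dual of ${\rm PFilt}(A)$, transfer each ``$A$--condition'' to its ``${\cal L}(A)$--counterpart'' via the isomorphisms of Proposition~\ref{retic} and Remark~\ref{intersarb}, and then quote the conormal--lattice literature for the remaining block. Your ``main obstacle'' is in fact a non--issue: the paper simply cites \cite[Theorem~4]{paw} for the whole chain $(3)\Leftrightarrow(5)\Leftrightarrow(7)\Leftrightarrow(9)\Leftrightarrow(11)\Leftrightarrow(13)\Leftrightarrow(15)$ in one stroke (keeping in mind the terminological swap of Note~\ref{notaden}), so no self--contained spectral--space argument is needed.
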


\begin{proof} (\ref{gelfand1})$\Rightarrow $(\ref{gelfand2}): Let $x,y\in A$, such that $[x)\vee [y)=A$. Then there exist $F,G\in {\rm Filt}(A)$ such that $F\cap G=[1)$ and $F\vee [x)=G\vee [y)=A$, thus there exist $u\in F$, $v\in G$ and $n\in \N ^*$ such that $u\odot x^n=v\odot y^n=0$. Hence $u\vee v\in F\cap G=[1)$, that is $u\vee v=1$, hence $[u)\cap [v)=[u\vee v)=[1)$, and $[u)\vee [x)=[u)\vee [x^n)=[u\odot x^n)=[0)=A=[0)=[v\odot y^n)=[v)\vee [y^n)=[v)\vee [y)$.

\noindent (\ref{gelfand2})$\Rightarrow $(\ref{gelfand1}): Let $F,G\in {\rm Filt}(A)$, such that $F\vee G=A$, that is there exist $x\in F$ and $y\in G$ such that $x\odot y=0$, thus $[x)\vee [y)=[x\odot y)=[0)=A$. Then there exist $u,v\in A$ such that $[u)\cap [v)=[1)$ and $[u)\vee [x)=[v)\vee [y)=A$. But $[x)\subseteq F$ and $[y)\subseteq G$, hence $[u)\vee F=[v)\vee G=A$.

\noindent (\ref{gelfand2})$\Leftrightarrow $(\ref{gelfand3}): By the fact that the lattice ${\cal L}(A)$ is isomorphic to the dual of ${\rm PFilt}(A)$, according to Proposition \ref{exsiunic} and Remark \ref{constrretic}.

\noindent \ref{gelfand3})$\Leftrightarrow $(\ref{gelfand5})$\Leftrightarrow $(\ref{gelfand7})$\Leftrightarrow $(\ref{gelfand9})$\Leftrightarrow $(\ref{gelfand11})$\Leftrightarrow $(\ref{gelfand13})$\Leftrightarrow $(\ref{gelfand15}): This is exactly \cite[Theorem $4$]{paw} applied to ${\cal L}(A)$.

\noindent (\ref{gelfand4})$\Leftrightarrow $(\ref{gelfand5}): By Proposition \ref{retic}, (\ref{retic1}), (\ref{retic2}) and (\ref{retic3}).

\noindent (\ref{gelfand6})$\Leftrightarrow $(\ref{gelfand7}): By Proposition \ref{retic}, (\ref{retic2}) and (\ref{retic3}).

\noindent (\ref{gelfand8})$\Leftrightarrow $(\ref{gelfand9}): By Proposition \ref{retic}, (\ref{retic2}).

\noindent (\ref{gelfand10})$\Leftrightarrow $(\ref{gelfand11}): By Proposition \ref{retic}, (\ref{retic1}), (\ref{retic2}) and (\ref{retic3}).

\noindent (\ref{gelfand12})$\Leftrightarrow $(\ref{gelfand13}): By Proposition \ref{retic}, (\ref{retic1}), (\ref{retic2}), (\ref{retic3}), and Remark \ref{intersarb}.

\noindent (\ref{gelfand14})$\Leftrightarrow $(\ref{gelfand15}): By Proposition \ref{retic}, (\ref{retic2}) and (\ref{retic3}).\end{proof}

Since the notion of a normal residuated lattice has another significance, we have chosen a denomination from ring theory for this particular kind of residuated lattices:

\begin{definition}
We shall call {\em residuated lattices with the Gelfand property} (or, in brief, {\em Gelfand residuated lattices}) those residuated lattices which have the equivalent properties from Proposition \ref{gelfand}.\end{definition}

Concerning condition (\ref{gelfand6}) from Proposition \ref{gelfand}:

\begin{proposition}
Let $A$ be a Gelfand residuated lattice. For every prime filter $P$ of $A$, let us denote by $M_P$ the only maximal filter of $A$ which includes $P$ (see condition (\ref{gelfand4}) from the same proposition), and define $\rho :{\rm Spec}(A)\rightarrow {\rm Max}(A)$ by $\rho (P)=M_P$ for every $P\in {\rm Spec}(A)$. Then $\rho $ is a continuous retract of the inclusion ${\rm Spec}(A)\subseteq {\rm Max}(A)$.\label{retract}\end{proposition}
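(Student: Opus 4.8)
The plan is to dispose of the formal parts first and then reduce continuity of $\rho$ to a uniqueness observation. First I would note that $\rho$ is well defined exactly because $A$ is Gelfand: condition (\ref{gelfand4}) of Proposition \ref{gelfand} says that each $P\in{\rm Spec}(A)$ is contained in a \emph{unique} maximal filter $M_P$, so $P\mapsto M_P$ makes sense. Moreover, if $M\in{\rm Max}(A)$, then the only maximal filter including $M$ is $M$ itself, whence $\rho(M)=M$; thus $\rho$ restricts to the identity on ${\rm Max}(A)$, i.e. $\rho$ is a set-theoretic retraction of the inclusion $i:{\rm Max}(A)\subseteq{\rm Spec}(A)$. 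Everything then comes down to the continuity of $\rho$.

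For continuity I would deliberately avoid trying to compute $\rho^{-1}(d(a))$ for basic open sets $d(a)$: membership of an element in $M_P$ is not readily expressed in terms of $P$ alone (since $P\subsetneq M_P$ in general), which makes a head-on verification awkward — this is the one genuinely delicate point of the proof. Instead I would exploit the fact, already recorded in Proposition \ref{gelfand}, (\ref{gelfand6}), that a Gelfand residuated lattice admits \emph{some} continuous retract $r:{\rm Spec}(A)\to{\rm Max}(A)$ of the inclusion $i$, and then show that any such $r$ is forced to coincide with $\rho$; this yields continuity of $\rho$ for free.

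The key step is the following. Fix $P\in{\rm Spec}(A)$. By Lemma \ref{lema1}, the topological closure of $\{P\}$ in ${\rm Spec}(A)$ is $V(P)=\{Q\in{\rm Spec}(A)\ |\ P\subseteq Q\}$, and this set contains $M_P$. Applying $r$ and using the standard inclusion $r(\overline{S})\subseteq\overline{r(S)}$ for continuous maps with $S=\{P\}$, we get $r(M_P)\in\overline{\{r(P)\}}$, the closure now taken in ${\rm Max}(A)$. Since ${\rm Max}(A)$ is $T_1$, $\overline{\{r(P)\}}=\{r(P)\}$, so $r(M_P)=r(P)$; but $M_P\in{\rm Max}(A)$ and $r$ is a retraction of $i$, hence $r(M_P)=M_P$. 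Therefore $r(P)=M_P=\rho(P)$, and since $P$ was arbitrary, $r=\rho$. Consequently $\rho$ is continuous, and by the first paragraph it is a continuous retract of the inclusion ${\rm Max}(A)\subseteq{\rm Spec}(A)$.

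As an alternative one could transfer the statement through the reticulation functor, Proposition \ref{retic}, (\ref{retic1})--(\ref{retic3}) making the filter lattices and the prime and maximal spectra of $A$ and ${\cal L}(A)$ correspond compatibly, and invoking the analogous fact for conormal bounded distributive lattices; but the argument above is self-contained given the equivalences already established. I expect the only real obstacle to be psychological: resisting the temptation of a direct computation and instead noticing that the $T_1$ separation of ${\rm Max}(A)$, together with the description of closures in ${\rm Spec}(A)$ from Lemma \ref{lema1}, pins down the retract uniquely.
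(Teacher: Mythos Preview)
Your proof is correct, and it takes a genuinely different route from the paper's. The paper proceeds by transfer through the reticulation: it notes that in the proof of \cite[Theorem~4]{paw} the author shows that for a conormal bounded distributive lattice the map sending each prime filter to the unique maximal filter above it is a continuous retract, applies this to ${\cal L}(A)$, and then pulls the statement back to $A$ via Proposition~\ref{retic}, (\ref{retic1})--(\ref{retic3}). You instead use the mere \emph{existence} of some continuous retract $r$ (granted by Proposition~\ref{gelfand}, (\ref{gelfand6})) and prove a uniqueness statement: the description of point closures in ${\rm Spec}(A)$ (Lemma~\ref{lema1}) together with the $T_1$ separation of ${\rm Max}(A)$ forces $r=\rho$. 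Your argument buys self-containment---no need to inspect the internals of Pawar's proof---and in fact yields the slightly sharper observation that the continuous retract in Proposition~\ref{gelfand}, (\ref{gelfand6}), is unique. The paper's argument, by contrast, is a one-line application of the transfer machinery already in place; amusingly, your closing paragraph sketches exactly the paper's approach as an alternative.
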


\begin{proof} In the proof of \cite[Theorem $4$]{paw}, the author shows that, for any conormal bounded distributive lattice $L$, the function that takes every prime filter of $L$ to the only maximal filter which includes it is a continuous retract of the inclusion ${\rm Spec}(L)\subseteq {\rm Max}(L)$. The statement in the enunciation follows by applying this to ${\cal L}(A)$ instead of $L$ and using Proposition \ref{retic}, (\ref{retic1}), (\ref{retic2}) and (\ref{retic3}).\end{proof}

\begin{lemma}
If $F$ and $G$ are filters of $A$, then the following assertions are equivalent:

\begin{enumerate}
\item\label{l5.11} $F\cap G=\{1\}$ and $F\vee G=A$;
\item\label{l5.12} there exists $e\in {\cal B}(A)$ such that $F=[e)$ and $G=[\neg \, e)$.\end{enumerate}\label{l5.1}\end{lemma}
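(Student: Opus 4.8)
The plan is to prove both implications directly, using the characterization of principal filters generated by Boolean elements from Section \ref{preliminaries}, namely that $[e)=\{a\in A\ |\ e\leq a\}$ for $e\in {\cal B}(A)$, together with the lattice-of-filters identities $[u)\cap [v)=[u\vee v)$ and $[u)\vee [v)=[u\odot v)=[u\wedge v)$.

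For the implication (\ref{l5.12})$\Rightarrow $(\ref{l5.11}): given $e\in {\cal B}(A)$ with $F=[e)$ and $G=[\neg \, e)$, I would compute $F\cap G=[e)\cap [\neg \, e)=[e\vee \neg \, e)=[1)=\{1\}$ using Lemma \ref{cbool}, (\ref{cbool1}), and $F\vee G=[e)\vee [\neg \, e)=[e\odot \neg \, e)=[0)=A$ using Lemma \ref{latrez}, (\ref{latrez1}) (which gives $e\odot \neg \, e=0$) and the fact that $[0)=A$ since $0$ is nilpotent. This direction is essentially immediate.

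For the implication (\ref{l5.11})$\Rightarrow $(\ref{l5.12}): from $F\vee G=A$ we have $0\in F\vee G$, so by the description $F\vee G=\{a\in A\ |\ (\exists\, x\in F)(\exists\, y\in G)(x\odot y\leq a)\}$ there exist $x\in F$, $y\in G$ with $x\odot y=0$, i.e.\ $y\leq \neg\, x$ by Lemma \ref{latrez}, (\ref{latrez1}). The natural candidate is $e=\neg\, \neg\, x$, but a cleaner route is to first arrange for a Boolean witness: I would show $x\vee \neg\, x\in F$ as follows — actually the key move is to note $x\in F$ and $y\in G$ give (via $y\leq \neg x$, so $\neg x\in G$) that $x\wedge \neg x\leq x\odot\text{(something)}$... let me instead argue that from $x\odot y=0$ and $x\in F$, $\neg x\in G$ (since $y\leq\neg x$), the element $e:=\neg\, \neg\, x$ need not be Boolean in general, so the real point is: because $F\cap G=\{1\}$ and both $x$ and $\neg x$ and all their powers are "small", one should replace $x$ by a suitable power and take complements. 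Concretely, since $x\in F$ we have $[x)\subseteq F$ and since $\neg x\in G$ we have $[\neg x)\subseteq G$; I would try to show $[x)\vee[\neg x)=[x\odot\neg x)=[0)=A$ and $[x)\cap[\neg x)=[x\vee\neg x)$, and then the hypothesis $F\cap G=\{1\}$ forces $[x\vee\neg x)\subseteq F\cap G=\{1\}$, hence $x\vee\neg x=1$, so $x\in{\cal B}(A)$ by Lemma \ref{cbool}, (\ref{cbool1})! Wait — that needs $[x)\cap[\neg x)\subseteq F\cap G$, which holds since $[x)\subseteq F$ and $[\neg x)\subseteq G$. So set $e=x$: then $e\in{\cal B}(A)$, $[e)=[x)\subseteq F$, $[\neg e)=[\neg x)\subseteq G$, and it remains to prove the reverse inclusions $F\subseteq[e)$ and $G\subseteq[\neg e)$. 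For this I would use that $[e)$ and $[\neg e)$ are a pair of "complementary" filters (just shown to satisfy (\ref{l5.11}) with the roles of $F,G$), so $[e)\vee[\neg e)=A$ and $[e)\cap[\neg e)=\{1\}$; combined with $[e)\subseteq F$, $[\neg e)\subseteq G$ and $F\cap G=\{1\}$, $F\vee G=A$, a short lattice computation in the distributive lattice ${\rm Filt}(A)$ (using that complements in a distributive lattice are unique, applied to the sublattice generated by $F,G,[e),[\neg e)$, or directly: $F=F\cap A=F\cap(G\vee[e))=(F\cap G)\vee(F\cap[e))=\{1\}\vee[e)=[e)$, using distributivity and $F\cap G=\{1\}$, $[e)\subseteq F$) yields $F=[e)$ and symmetrically $G=[\neg e)$.

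The main obstacle is getting the element $e$ correct: the hypothesis only hands us elements $x,y$ with $x\odot y=0$, which need not themselves be Boolean, and one must extract a genuine Boolean element. The trick above — observing that $[x)\cap[\neg x)=[x\vee\neg x)$ is squeezed inside $F\cap G=\{1\}$, forcing $x\vee\neg x=1$ — is the crux; after that, uniqueness of complements in the distributive lattice ${\rm Filt}(A)$ (which is distributive by Section \ref{preliminaries}) makes the rest routine. I would double-check that $[\neg x)\subseteq G$ really follows: we have $y\leq\neg x$, hence $[\neg x)\subseteq[y)\subseteq G$ by the order-reversing behaviour of principal filter generation and $y\in G$.
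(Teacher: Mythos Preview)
Your proof is correct. The overall architecture matches the paper's: from $F\vee G=A$ extract $x\in F$, $y\in G$ with $x\odot y=0$; deduce $\neg x\in G$; use $F\cap G=\{1\}$ to force $x\vee\neg x=1$, hence $x\in{\cal B}(A)$; then verify $F=[x)$ and $G=[\neg x)$. The direction (\ref{l5.12})$\Rightarrow$(\ref{l5.11}) is the same computation in both.

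Where you diverge from the paper is in the two finishing steps of (\ref{l5.11})$\Rightarrow$(\ref{l5.12}). For showing $x\in{\cal B}(A)$, the paper argues at the element level: since $x\in F$ and $y\in G$, one has $x\vee y\in F\cap G=\{1\}$, whence $x\vee y=1$ and then $x\vee\neg x=1$ via $y\leq\neg x$. You instead pass through filters, noting $[x\vee\neg x)=[x)\cap[\neg x)\subseteq F\cap G=\{1\}$. For the reverse inclusions $F\subseteq[e)$, $G\subseteq[\neg e)$, the paper again works elementwise: for $a\in F$ it computes $e\rightarrow a=\neg e\vee a\in F\cap G=\{1\}$ (using Lemma~\ref{cbool},~(\ref{cbool2})), so $e\leq a$. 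You replace this by a single distributive-lattice computation in ${\rm Filt}(A)$: $F=F\cap(G\vee[e))=(F\cap G)\vee(F\cap[e))=\{1\}\vee[e)=[e)$, exploiting that $G\vee[e)\supseteq[\neg e)\vee[e)=A$. Your route is a bit more conceptual (it is really the uniqueness of complements in the distributive lattice ${\rm Filt}(A)$) and avoids invoking the Boolean implication formula; the paper's route is more self-contained at the element level. Both are short and standard. Your written argument would benefit from excising the false starts before the final line of reasoning.
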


\begin{proof} (\ref{l5.11})$\Rightarrow $(\ref{l5.12}): If $F\vee G=A$, then there exist $e\in F$ and $f\in G$ such that $e\odot f=0$, that is $f\leq \neg \, e$, according to Lemma \ref{latrez}, (\ref{latrez1}), hence $\neg \, e\in G$, thus $[e)\subseteq F$ and $[\neg \, e)\subseteq G$. Then $e\vee f\in F\cap G=\{1\}$, which means that $e\vee f=1$, hence $e\vee \neg \, e=1$ by the inequality above. As shown by Lemma \ref{cbool}, (\ref{cbool1}) and (\ref{cbool0}), this means that $e\in {\cal B}(A)$, hence $\neg \, e\in {\cal B}(A)$ also. Now let $a\in F$ and $b\in G$. Then, according to Lemma \ref{cbool}, (\ref{cbool2}) and (\ref{cbool0}), and Lemma \ref{latrez}, (\ref{latrez5}), $e\rightarrow a=\neg \, e\vee a\in F\cap G=\{1\}$ and $\neg \, e\rightarrow b=\neg \, \neg \, e\vee b=e\vee b\in F\cap G=\{1\}$, so $e\rightarrow a=\neg \, e\rightarrow b=1$, that is $e\leq a$ and $\neg \, e\leq b$, thus $a\in [e)$ and $b\in [\neg \, e)$, hence $F\subseteq [e)$ and $G\subseteq [\neg \, e)$. Therefore $F=[e)$ and $G=[\neg \, e)$.

\noindent (\ref{l5.12})$\Rightarrow $(\ref{l5.11}): By Lemma \ref{cbool}, (\ref{cbool2}), and Lemma \ref{latrez}, (\ref{latrez1}).\end{proof}

\begin{proposition}
\begin{enumerate}
\item\label{clp1} ${\rm Clp}({\rm Spec}(A))=\{V(e)\ |\ e\in {\cal B}(A)\}=\{D(e)\ |\ e\in {\cal B}(A)\}$.
\item\label{clp2} If $A$ is Gelfand, then ${\rm Clp}({\rm Max}(A))=\{v(e)\ |\ e\in {\cal B}(A)\}=\{d(e)\ |\ e\in {\cal B}(A)\}$.
\item\label{clp3} If $A$ is semisimple, then ${\rm Clp}({\rm Max}(A))=\{v(e)\ |\ e\in {\cal B}(A)\}=\{d(e)\ |\ e\in {\cal B}(A)\}$.\end{enumerate}\label{clp}\end{proposition}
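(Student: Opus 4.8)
The plan is to prove the three parts in order, obtaining the later ones as refinements of the first. For part (\ref{clp1}), the inclusion $\{V(e)\mid e\in{\cal B}(A)\}=\{D(e)\mid e\in{\cal B}(A)\}\subseteq{\rm Clp}({\rm Spec}(A))$ is already Corollary \ref{corvdb}, so the real content is the reverse inclusion. Given a clopen set $C$ of ${\rm Spec}(A)$, I would write $C=D(F)$ and ${\rm Spec}(A)\setminus C=D(G)$ for suitable $F,G\in{\rm Filt}(A)$, using the fact that the open sets are exactly the $D(F)$ and the closed sets exactly the $V(F)$. From $C\cup({\rm Spec}(A)\setminus C)={\rm Spec}(A)$ and $C\cap({\rm Spec}(A)\setminus C)=\emptyset$, together with Proposition \ref{vsid} ($D(F)\cup D(G)=D(F\vee G)$ and $D(F)\cap D(G)=D(F\cap G)$) and Lemma \ref{incld} (which turns equalities of $D(\cdot)$'s into equalities of filters), I would deduce $F\vee G=A$ and $F\cap G=\{1\}$. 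Now Lemma \ref{l5.1} applies verbatim: there is $e\in{\cal B}(A)$ with $F=[e)$ and $G=[\neg\,e)$, whence $C=D(F)=D([e))=D(e)$, which by Lemma \ref{vsidbool} equals $V(\neg\,e)$, a set of the required form. This establishes (\ref{clp1}).

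For part (\ref{clp2}), assume $A$ is Gelfand. Again Corollary \ref{corvdb} gives the easy inclusion, so I need: every clopen subset of ${\rm Max}(A)$ is of the form $v(e)=d(\neg\,e)$ for some $e\in{\cal B}(A)$. Here I would exploit the continuous retract $\rho:{\rm Spec}(A)\to{\rm Max}(A)$ from Proposition \ref{retract}. If $C\subseteq{\rm Max}(A)$ is clopen, then $\rho^{-1}(C)$ is clopen in ${\rm Spec}(A)$ (continuity gives open; its complement $\rho^{-1}({\rm Max}(A)\setminus C)$ is likewise open, so it is closed too), so by part (\ref{clp1}) there is $e\in{\cal B}(A)$ with $\rho^{-1}(C)=V(e)$. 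Since $\rho$ restricted to ${\rm Max}(A)$ is the identity, intersecting with ${\rm Max}(A)$ gives $C=\rho^{-1}(C)\cap{\rm Max}(A)=V(e)\cap{\rm Max}(A)=v(e)$, as desired. Thus ${\rm Clp}({\rm Max}(A))=\{v(e)\mid e\in{\cal B}(A)\}$, and the equality with $\{d(e)\mid e\in{\cal B}(A)\}$ is Lemma \ref{vsidbool} again.

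For part (\ref{clp3}), assume $A$ is semisimple. The cleanest route is to reduce to part (\ref{clp2}) by showing that a semisimple residuated lattice is Gelfand, or alternatively to transport the statement through the homeomorphism ${\rm Max}(A)\cong{\rm Max}(A/{\rm Rad}(A))$ of Lemma \ref{lema2} — but since $A$ is already semisimple this quotient is $A$ itself, so that buys nothing directly; instead I expect the intended argument uses Lemma \ref{lema1}, namely that when $A$ is semisimple ${\rm Max}(A)$ is dense in ${\rm Spec}(A)$. Density makes the restriction map $C\mapsto C\cap{\rm Max}(A)$ from ${\rm Clp}({\rm Spec}(A))$ to ${\rm Clp}({\rm Max}(A))$ injective, and one checks it is also surjective: a clopen $C$ of ${\rm Max}(A)$ and its complement are each open in ${\rm Max}(A)$, hence traces of open sets $D(F)\cap{\rm Max}(A)$, $D(G)\cap{\rm Max}(A)$ of ${\rm Spec}(A)$; density forces $D(F)$ and $D(G)$ to be complementary clopen sets of ${\rm Spec}(A)$, so by (\ref{clp1}) $D(F)=D(e)$ for some $e\in{\cal B}(A)$ and $C=d(e)=v(\neg\,e)$. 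Either way the conclusion is ${\rm Clp}({\rm Max}(A))=\{v(e)\mid e\in{\cal B}(A)\}=\{d(e)\mid e\in{\cal B}(A)\}$.

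The main obstacle I anticipate is part (\ref{clp3}): one must be careful that in the non-Gelfand (merely semisimple) case there is no retract $\rho$ to lean on, so the surjectivity of the restriction map $C\mapsto C\cap{\rm Max}(A)$ has to be argued purely from density — specifically, from the fact that two disjoint open sets of ${\rm Spec}(A)$ whose traces on the dense subset ${\rm Max}(A)$ cover ${\rm Max}(A)$ must themselves cover ${\rm Spec}(A)$ and be disjoint. Verifying that this lets one pull a clopen partition of ${\rm Max}(A)$ back to a clopen partition of ${\rm Spec}(A)$, so that Lemma \ref{l5.1} becomes applicable exactly as in part (\ref{clp1}), is the delicate point; the rest is bookkeeping with the dictionary $V,D,v,d$ and Lemmas \ref{vsidbool}, \ref{incld}, and \ref{l5.1}.
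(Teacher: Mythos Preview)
Your proof is correct, and for parts (\ref{clp2}) and (\ref{clp3}) it takes a different and in places cleaner route than the paper.

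For (\ref{clp1}) you and the paper do essentially the same thing; you invoke Lemma \ref{l5.1} directly where the paper rederives its content inline (finding $f\in F$ with $f\vee\neg f=1$ and then checking $V(F)=V(f)$ by hand).

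For (\ref{clp2}) your argument is markedly simpler. You pull the clopen $C\subseteq{\rm Max}(A)$ back along the retract $\rho$ to a clopen $\rho^{-1}(C)\subseteq{\rm Spec}(A)$, apply (\ref{clp1}) to write $\rho^{-1}(C)=V(e)$, and intersect with ${\rm Max}(A)$ (using that $\rho$ is the identity there) to get $C=v(e)$. The paper also forms $L=\rho^{-1}(K)$ and notes it is clopen, but then takes a detour: it intersects all primes in $L$ and in its complement to produce filters $F,G$, argues by contradiction that $F\vee G=A$, applies Lemma \ref{l5.1} to get $F=[g)$, and finally checks $L=V(g)$ and $K=v(g)$. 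Your shortcut---apply (\ref{clp1}) directly to $\rho^{-1}(C)$---is what the paper's argument is effectively doing, but the paper obscures it.

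For (\ref{clp3}) the two approaches genuinely diverge. The paper works entirely inside ${\rm Max}(A)$: compactness plus the basis $\{d(a)\}$ gives $K=d(a)$ and ${\rm Max}(A)\setminus K=d(b)$ for some $a,b\in A$; then $d(a\odot b)={\rm Max}(A)$ forces $(a\odot b)^n=0$, while $d(a^n\vee b^n)=\emptyset$ together with semisimplicity (${\rm Rad}(A)=\{1\}$) forces $a^n\vee b^n=1$, so $a^n\in{\cal B}(A)$ and $K=d(a^n)$. Your approach instead lifts the clopen partition of ${\rm Max}(A)$ to a clopen partition of ${\rm Spec}(A)$ and then reuses (\ref{clp1}). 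One clarification on your ``density forces'' sentence: density alone yields $D(F)\cap D(G)=\emptyset$ (a nonempty open set of ${\rm Spec}(A)$ must meet the dense subset ${\rm Max}(A)$), but the covering $D(F)\cup D(G)={\rm Spec}(A)$ is not a density argument---it follows from $d(F)\cup d(G)={\rm Max}(A)\Rightarrow v(F\vee G)=\emptyset\Rightarrow F\vee G=A$ (every proper filter extends to a maximal one) $\Rightarrow D(F\vee G)={\rm Spec}(A)$. With that adjustment your argument goes through. The paper's route is more self-contained on ${\rm Max}(A)$ and makes the role of semisimplicity explicit at the point $a^n\vee b^n\in{\rm Rad}(A)=\{1\}$; your route has the virtue of unifying all three parts by reducing everything to (\ref{clp1}) on ${\rm Spec}(A)$.
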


\begin{proof} (\ref{clp1}) The fact that $\{V(e)\ |\ e\in {\cal B}(A)\}=\{D(e)\ |\ e\in {\cal B}(A)\}\subseteq {\rm Clp}({\rm Spec}(A))$ is part of Corollary \ref{corvdb}.

For the converse inclusion, let us consider a clopen set $C$ of ${\rm Spec}(A)$. Then $C$ is a closed set of ${\rm Spec}(A)$, thus $C=V(F)$ for some $F\in {\rm Filt}(A)$. Also, it follows that $C$ is an open set of ${\rm Spec}(A)$, thus ${\rm Spec}(A)\setminus C={\rm Spec}(A)\setminus V(F)$ is a closed set of ${\rm Spec}(A)$, that is there exists $G\in {\rm Filt}(A)$ such that ${\rm Spec}(A)\setminus V(F)=V(G)$, that is ${\rm Spec}(A)=V(F)\cup V(G)=V(F\cap G)$ and $\emptyset =V(F)\cap V(G)=V(F\vee G)$, according to Proposition \ref{vsid}. Therefore $F\cap G=\{1\}$ and $A=F\vee G=\{x\in A\ |\ (\exists \, f\in F)\, (\exists \, g\in G)\, (f\odot g\leq x)\}$, hence there exist $f\in F,g\in G$ such that $f\odot g=0$, so $g\leq \neg \, f$ by Proposition \ref{latrez}, (\ref{latrez1}), thus $\neg \, f\in G$. So $f\in F$ and $\neg \, f\in G$, therefore $f\vee \neg \, f\in F\cap G=\{1\}$, thus $f\vee \neg \, f=1$, which means that $f\in {\cal B}(A)$ according to Lemma \ref{cbool}, (\ref{cbool1}).

Now let us prove that $V(F)=V(f)$. Since $f\in F$, we have $V(F)\subseteq V(f)$. Let $P\in V(f)$, so $f\in P$, thus $\neg \, f\notin P$, as shown by Proposition \ref{latrez}, (\ref{latrez1}), and the fact that $P$ is a prime and thus a proper filter of $A$. But $\neg \, f\in G$, hence $G\nsubseteq P$, that is $P\notin V(G)={\rm Spec}(A)\setminus V(F)$, therefore $P\in V(F)$. So $V(f)\subseteq V(F)$, thus $C=V(F)=V(f)$.

\noindent (\ref{clp2}) The fact that $\{v(e)\ |\ e\in {\cal B}(A)\}=\{d(e)\ |\ e\in {\cal B}(A)\}\subseteq {\rm Clp}({\rm Max}(A))$ is part of Corollary \ref{corvdb}.

For the converse inclusion, let $K$ be a clopen subset of ${\rm Max}(A)$ and denote $L=\{P\in {\rm Spec}(A)\ |\ (\exists \, M\in K)\, (P\subseteq M)\}$. Let $\rho :{\rm Spec}(A)\rightarrow {\rm Max}(A)$ be the continuous retract of the inclusion ${\rm Max}(A)\subseteq {\rm Spec}(A)$ from Proposition \ref{retract}. Then the inverse image $\rho ^{-1}(K)=L$, hence $L$ is a clopen subset of ${\rm Spec}(A)$ since $\rho $ is continuous, thus ${\rm Spec}(A)\setminus L$ is also a clopen subset of ${\rm Spec}(A)$. By (\ref{clp1}), this means that $L=D(e)$ and ${\rm Spec}(A)\setminus L=D(f)$ for some $e,f\in {\cal B}(A)$. Hence ${\rm Spec}(A)\setminus L={\rm Spec}(A)\setminus D(e)=V(e)$ and $L={\rm Spec}(A)\setminus D(f)=V(f)$, so every $P\in L$ contains $f$ and every $P\in {\rm Spec}(A)\setminus L$ contains $e$. Now let us denote $\displaystyle F=\bigcap _{P\in L}P$ and $\displaystyle G=\bigcap _{P\in {\rm Spec}(A)\setminus L}P$. Then $\displaystyle F\cap G=\bigcap _{P\in {\rm Spec}(A)}P=\{1\}$, $f\in F$ and $e\in G$, so $e\vee f\in F\vee G$. Assume by absurdum that $F\vee G\neq A$, which means that there exists $P\in {\rm Spec}(A)$ such that $F\vee G\subseteq P$, thus $e\vee f\in P$. Since ${\rm Spec}(A)=L\cup ({\rm Spec}(A)\setminus L)=D(e)\cup D(f)$, it follows that $P\in D(e)$ or $P\in D(f)$, therefore $e\notin P$ or $f\notin P$. We have obtaines a contradiction with the primality of $P$. We conclude that $F\vee G=A$. According to Lemma \ref{l5.1}, we get that there exists $g\in {\cal B}(A)$ such that $F=[g)$ and $G=[\neg \, g)$, that is $\displaystyle \bigcap _{P\in L}P=[g)$ and $\displaystyle \bigcap _{P\in {\rm Spec}(A)\setminus L}P=[\neg \, g)$. These equalities show that the following hold for every $Q\in {\rm Spec}(A)$: if $Q\in L$, then $g\in Q$, thus $Q\in V(g)$, while, if $Q\in {\rm Spec}(A)\setminus L$, then $\neg \, g\in Q$, thus $Q\in V(\neg \, g)$. Therefore $L\subseteq V(g)$ and ${\rm Spec}(A)\setminus L\subseteq V(\neg \, g)={\rm Spec}(A)\setminus V(g)$ by Lemma \ref{vsidbool}, so $V(g)\subseteq L$, hence $L=V(g)$, thus we have, for the image of $L$ through $\rho $: $K=\rho (L)=\rho (V(g))=v(g)$, where the last equality follows immediately by double inclusion.

\noindent (\ref{clp3}) The fact that $\{v(e)\ |\ e\in {\cal B}(A)\}=\{d(e)\ |\ e\in {\cal B}(A)\}\subseteq {\rm Clp}({\rm Max}(A))$ is part of Corollary \ref{corvdb}. Below we prove the converse inclusion.

Let $K$ be a clopen subset of ${\rm Max}(A)$, which is a compact space, so that $K$ is compact. Since $\{d(a)\ |\ a\in A\}$ is a basis of open sets for ${\rm Max}(A)$, it follows that $\displaystyle K=\bigcup _{i\in I}d(a_i)$ for some $(a_i)_{i\in I}\subseteq A$. But $K$ is compact, thus there exists a finite subset $I_0$ of $I$ such that $\displaystyle K=\bigcup _{i\in I_0}d(a_i)=d(a)$, with $\displaystyle a=\bigwedge _{i\in I_0}a_i$, as shown by Proposition \ref{vsidelem}, (\ref{vsidelem1}). Thus ${\rm Clp}({\rm Max}(A))\subseteq \{d(x)\ |\ x\in A\}$. $d(a)$ is clopen, thus $v(a)$ is clopen in ${\rm Max}(A)$, so $v(a)=d(b)$ for some $b\in A$. But, again by by Proposition \ref{vsidelem}, (\ref{vsidelem1}), $d(a\odot b)=d(a)\vee d(b)=d(a)\vee v(a)\supseteq d(a)\cup v(a)={\rm Max}(A)$, so $d(a\odot b)={\rm Max}(A)$, which means that $a^n\odot b^n=(a\odot b)^n=0$ for some $n\in \N ^*$. According to Proposition \ref{vsidelem}, (\ref{vsidelem2}), $d(a^n\vee b^n)=d(a^n)\cap d(b^n)=d(a)\cap d
(b)=d(a)\cap v(a)=\emptyset $, hence $a^n\vee b^n\in {\rm Rad}(A)=\{1\}$, because $A$ is semisimple. By Lemma \ref{latrez}, (\ref{latrez10}), it follows that $a^n\vee b^n=1$ and $a^n\wedge b^n=0$, hence $a^n,b^n\in {\cal B}(A)$. Thus $K=d(a)=d(a^n)$, with $a^n\in {\cal B}(A)$.\end{proof}

In the following, we shall make repeated use of the definition of Gelfand residuated lattices through condition (\ref{gelfand4}) from Proposition \ref{gelfand}, to which we have appealled in the Proposition \ref{retract} above as well. We shall not recall this condition in the results below.

\begin{example}
Any local residuated lattice is Gelfand. Since any linearly orderred residuated lattice is local (see Remark \ref{celeloc}), we get that any linearly orderred residuated lattice is Gelfand.\label{locsuntg}\end{example}

\begin{example}
It is known (\cite{kow}, \cite{pic}) that $A$ is a hyperarchimedean residuated lattice iff ${\rm Spec}(A)={\rm Max}(A)$. Consequently, any hyperarchimedean residuated lattice has the Gelfand property. In particular, any Boolean algebra is a Gelfand residuated lattice.\end{example}

\begin{example}
Any MV--algebra is a Gelfand residuated lattice. Moreover, any BL--algebra is a Gelfand residuated lattice. These facts follow from Remark \ref{blgelfand}.\label{exmvbl}\end{example}

\begin{example}
The residuated lattice $A$ in Example \ref{exlpdif} is not Gelfand. Indeed, $\{1\}\in {\rm Spec}(A)$ and ${\rm Max}(A)=\{[a),[b)\}$, with $[a)\neq [b)$.\label{nicig}\end{example}

In \cite{ggcm}, we have introduced these properties for a residuated lattice $A$:

\begin{center}
\begin{tabular}{rl}
$(\star )$ & for all $x\in A$, there exist $u\in {\rm Rad}(A)$ and $e\in {\cal B}(A)$ such that $[x)=[u)\vee [e)$;\\
$(\star \star )$ & for all $x\in A$, there exist $u\in A$ and $e\in {\cal B}(A)$ such that $\neg \, u$ is nilpotent and $[x)=[u)\vee [e)$.\end{tabular}
\end{center}

\noindent and we have proved that $(\star )\Rightarrow $ BLP $\Rightarrow (\star \star )$.

\begin{remark}
Example \ref{nicig} suggests a method for obtaining residuated lattices without the Gelfand property, by taking into account the obvious fact that a residuated lattice whose trivial filter is prime is Gelfand iff it is local: take a residuated lattice $R$ which is not local, and let $A$ be the ordinal sum between $R$ and a non--trivial residuated chain $C$ (see \cite{gal}, \cite[p. 129, 136, 137]{ior} for this particular construction). Then $\{1\}\in {\rm Spec}(A)$, while ${\rm Max}(A)=\{M\cup C\ |\ M\in {\rm Max}(R)\}$, so $A$ is not local either, hence $A$ is not Gelfand.

Actually, as shown by \cite[Remark $6.32$]{ggcm}, if $A$ is the ordinal sum between a residuated lattice $L$ and a non--trivial chain $C$, then: $A$ is Gelfand iff $A$ is local iff $L$ is local iff $A$ has BLP iff $A$ has $(\star )$ iff $A$ has $(\star \star )$.\end{remark}

\begin{lemma}{\rm \cite{coloqu}} If $L$ is a bounded distributive lattice, then ${\rm Rad}(L)=\{a\in L\ |\ (\forall \, x\in L)\, (a\wedge x=0\Rightarrow x=0)\}$.\label{radd01}\end{lemma}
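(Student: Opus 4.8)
The plan is to prove the equality by double inclusion, writing $R=\{a\in L\ |\ (\forall \, x\in L)\, (a\wedge x=0\Rightarrow x=0)\}$ for the set on the right-hand side. Both inclusions will use only the elementary facts about filters of a bounded distributive lattice recalled in Section \ref{preliminaries} (adapted from the residuated lattice case by replacing $\odot $ with $\wedge $): a principal filter $[x)$ is proper iff $x\neq 0$; every proper filter is contained in a maximal filter; and, for a filter $M$ and an element $a$, the filter generated by $M\cup \{a\}$ is $\{y\in L\ |\ (\exists \, m\in M)\, (m\wedge a\leq y)\}$, which strictly includes $M$ precisely when $a\notin M$.

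For the inclusion ${\rm Rad}(L)\subseteq R$, I would fix $a\in {\rm Rad}(L)$ and an $x\in L$ with $a\wedge x=0$, and show $x=0$ by contradiction. If $x\neq 0$, then $[x)$ is a proper filter, hence $[x)\subseteq M$ for some maximal filter $M$, so $x\in M$; since $a\in {\rm Rad}(L)\subseteq M$ and $M$ is closed under $\wedge $, we get $a\wedge x\in M$, that is $0\in M$, contradicting the properness of $M$.

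For the reverse inclusion $R\subseteq {\rm Rad}(L)$, I would fix $a\in R$ and an arbitrary maximal filter $M$, and show $a\in M$. If $a\notin M$, then the filter generated by $M\cup \{a\}$ strictly includes $M$, hence equals $L$ by maximality of $M$; in particular $0$ belongs to it, so $m\wedge a\leq 0$, that is $m\wedge a=0$, for some $m\in M$. Then $a\in R$ forces $m=0$, contradicting $0\notin M$. Thus $a$ lies in every maximal filter of $L$, i.e.\ $a\in {\rm Rad}(L)$.

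I do not expect a genuine obstacle; the only step requiring slight care is the description of the filter generated by $M\cup \{a\}$, together with the observation that it properly extends $M$ exactly when $a\notin M$, so that the maximality of $M$ really does make that filter improper and thereby forces $0$ into it. (The degenerate case $0=1$ is immediate, since then ${\rm Rad}(L)=L=R$.)
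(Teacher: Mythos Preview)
Your proof is correct. Note that the paper does not give its own proof of this lemma: it is stated with a citation to \cite{coloqu} and used as a known result, so there is no argument in the paper to compare against. Your double-inclusion argument is the standard one and fits cleanly with the filter-theoretic background recalled in Section~\ref{preliminaries}.
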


\begin{proposition} The radical of any conormal bounded distributive lattice has BLP.\label{radco}\end{proposition}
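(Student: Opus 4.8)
The statement asks us to prove that, for a conormal bounded distributive lattice $L$, the filter ${\rm Rad}(L)$ satisfies ${\cal B}(L/{\rm Rad}(L))={\cal B}(L)/{\rm Rad}(L)$. Since the inclusion ${\cal B}(L)/{\rm Rad}(L)\subseteq {\cal B}(L/{\rm Rad}(L))$ always holds (it is the lattice instance of the generic inclusion ${\cal A}(\varphi)/\theta\subseteq ({\cal A}/\theta)(\varphi)$ from Remark \ref{exultriv}), the whole content is the reverse inclusion: every Boolean element of the quotient must be lifted to a Boolean element of $L$.

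First I would translate the hypothesis. Let $b\in L$ with $b/{\rm Rad}(L)\in {\cal B}(L/{\rm Rad}(L))$, so there is $c\in L$ with $(b\vee c)/{\rm Rad}(L)=1/{\rm Rad}(L)$ and $(b\wedge c)/{\rm Rad}(L)=0/{\rm Rad}(L)$. Unwinding the congruence modulo a filter in a distributive lattice, the first equality yields some $a\in {\rm Rad}(L)$ with $a\leq b\vee c$, whence $b\vee c\in {\rm Rad}(L)$ because ${\rm Rad}(L)$ is an upset; the second yields some $a'\in {\rm Rad}(L)$ with $a'\wedge (b\wedge c)=0$, whence $b\wedge c=0$ by Lemma \ref{radd01}. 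So I may work from the two clean facts $b\vee c\in {\rm Rad}(L)$ and $b\wedge c=0$.

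Next, applying conormality to $b\wedge c=0$ produces $u,v\in L$ with $u\vee v=1$, $u\wedge b=0$ and $v\wedge c=0$. Distributivity gives $b=b\wedge (u\vee v)=b\wedge v$, i.e.\ $b\leq v$, and symmetrically $c\leq u$. To see that $v$ is complemented, with complement $u$, it remains to check $u\wedge v=0$: from $u\wedge b=0$ and $v\wedge c=0$ one gets $(u\wedge v)\wedge b=0$ and $(u\wedge v)\wedge c=0$, hence $(u\wedge v)\wedge (b\vee c)=0$, and since $b\vee c\in {\rm Rad}(L)$, Lemma \ref{radd01} forces $u\wedge v=0$. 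Thus $v\in {\cal B}(L)$.

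Finally I would verify that $v/{\rm Rad}(L)=b/{\rm Rad}(L)$. Put $a=b\vee c\vee u$; then $a\in {\rm Rad}(L)$ since $b\vee c\in {\rm Rad}(L)$ and ${\rm Rad}(L)$ is upward closed, and $v\wedge a=(v\wedge b)\vee (v\wedge c)\vee (v\wedge u)=b\vee 0\vee 0=b=b\wedge a$, so $v\equiv b\ ({\rm mod}\ {\rm Rad}(L))$. Hence $v$ is the desired Boolean lift of $b/{\rm Rad}(L)$, and ${\rm Rad}(L)$ has BLP. I do not expect a genuine obstacle: the only delicate point is using $b\vee c$ twice as a ``dense'' witness (once to deduce $b\wedge c=0$, once to deduce $u\wedge v=0$) and choosing the auxiliary element $a=b\vee c\vee u$; everything else is routine distributive-lattice computation together with conormality and Lemma \ref{radd01}.
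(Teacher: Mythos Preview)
Your proof is correct and follows essentially the same route as the paper's: unwind the congruence, use Lemma~\ref{radd01} once to get $b\wedge c=0$, apply conormality to obtain $u,v$ with $u\vee v=1$, use Lemma~\ref{radd01} a second time (with the dense element $b\vee c$) to conclude $u\wedge v=0$ and hence $v\in{\cal B}(L)$, then check $v/{\rm Rad}(L)=b/{\rm Rad}(L)$. The only cosmetic difference is in this last verification: the paper works inside the quotient, showing $x/R\leq v/R$ and $v/R\leq x/R$ separately, whereas you exhibit an explicit witness $a=b\vee c\vee u\in{\rm Rad}(L)$ with $v\wedge a=b\wedge a$; both are fine.
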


\begin{proof} Let $L$ be a conormal bounded distributive lattice, and let us denote by $R={\rm Rad}(L)$. Take an arbitrary $x\in L$ such that $x/R\in {\cal B}(L/R)$, so that there exists an $y\in L$ with $(x\vee y)/R=x/R\vee y/R=1/R$ and $(x\wedge y)/R=x/R\wedge y/R=0/R$. $(x\wedge y)/R=0/R$ means that there exists $a\in R$ such that $a\wedge x\wedge y=a\wedge 0=0$. From Lemma \ref{radd01} and the conormality of $L$, we get that $x\wedge y=0$, hence there exist $u,v\in L$ such that $u\vee v=1$ and $u\wedge x=v\wedge y=0$, therefore $(x\vee y)\wedge u\wedge v=(x\wedge u\wedge v)\vee (y\wedge u\wedge v)=(0\wedge v)\vee (0\wedge u)=0\vee 0=0$. Also, that fact that $(x\vee y)/R=1/R$ means that $x\vee y\in R$, and, again by Lemma \ref{radd01}, we get that $u\wedge v=0$. We have obtained $u\vee v=1$ and $u\wedge v=0$, thus $u,v\in {\cal B}(L)$. But $u\wedge x=0$, hence $v=v\vee 0=v\vee (u\wedge x)=(v\vee u)\wedge (v\vee x)=1\wedge (v\vee x)=v\vee x$, thus $v=v\vee x$, therefore $x\leq v$, hence $x/R\leq v/R$. Now we apply the fact that $x/R\vee y/R=1/R$ and we deduce the following: $v/R=v/R\wedge 1/R=v/R\wedge (x/R\vee y/R)=(v/R\wedge x/R)\vee (v/R\wedge y/R)=(v/R\wedge x/R)\vee (v\wedge y)/R=(v/R\wedge x/R)\vee 0/R=v/R\wedge x/R$, so $v/R=v/R\wedge x/R$, hence $v/R\leq x/R$. Therefore $x/R=v/R\in {\cal B}(L)/R$. This means that $R={\rm Rad}(L)$ has BLP in $L$.\end{proof}

\begin{corollary} \begin{enumerate}
\item\label{radgelfand1} The radical of any Gelfand residuated lattice has BLP.
\item\label{radgelfand2} If $A$ is an MV--algebra or a BL--algebra, then ${\rm Rad}(A)$ has BLP.\end{enumerate}\label{radgelfand}\end{corollary}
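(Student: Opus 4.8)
The plan is to reduce both parts to Proposition \ref{radco} by transferring everything through the reticulation functor ${\cal L}$.

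For part (\ref{radgelfand1}), let $A$ be a Gelfand residuated lattice. By the definition of the Gelfand property, $A$ satisfies all the equivalent conditions of Proposition \ref{gelfand}, and in particular condition (\ref{gelfand3}): the bounded distributive lattice ${\cal L}(A)$ is conormal. Proposition \ref{radco} then gives that ${\rm Rad}({\cal L}(A))$ has BLP in ${\cal L}(A)$. Next I would identify this radical with the reticulation image of ${\rm Rad}(A)$: by Corollary \ref{transferad}, (\ref{transferad3}), we have $\lambda ({\rm Rad}(A))={\rm Rad}({\cal L}(A))$, so the filter $\lambda ({\rm Rad}(A))$ of ${\cal L}(A)$ has BLP. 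Finally, applying Proposition \ref{blpretic} with the filter $F={\rm Rad}(A)$ of $A$, the filter ${\rm Rad}(A)$ has BLP in $A$ if and only if $\lambda ({\rm Rad}(A))$ has BLP in ${\cal L}(A)$; since the latter holds, so does the former.

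For part (\ref{radgelfand2}), I would simply recall from Example \ref{exmvbl} (which in turn rests on Remark \ref{blgelfand}) that every MV--algebra and every BL--algebra is a Gelfand residuated lattice, so the assertion is immediate from part (\ref{radgelfand1}).

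There is no genuinely difficult step in this argument: the substantive work has already been isolated, on the one hand in Proposition \ref{radco} (the lattice-theoretic manipulation of the radical of a conormal distributive lattice, using Lemma \ref{radd01}), and on the other hand in the good functorial behaviour of the reticulation (Propositions \ref{blpretic} and \ref{retic} and Corollary \ref{transferad}). The only point that deserves a moment's care is making sure that the filter of ${\cal L}(A)$ to which Proposition \ref{radco} is applied, namely ${\rm Rad}({\cal L}(A))$, really coincides with $\lambda ({\rm Rad}(A))$; this is precisely Corollary \ref{transferad}, (\ref{transferad3}), so nothing extra needs to be established.
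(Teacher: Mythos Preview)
Your proof is correct and follows exactly the same approach as the paper: invoke Proposition \ref{gelfand}, (\ref{gelfand3}), to get conormality of ${\cal L}(A)$, apply Proposition \ref{radco}, identify ${\rm Rad}({\cal L}(A))$ with $\lambda({\rm Rad}(A))$ via Corollary \ref{transferad}, (\ref{transferad3}), and transfer BLP back through Proposition \ref{blpretic}; part (\ref{radgelfand2}) follows from Example \ref{exmvbl}. There is nothing to add.
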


\begin{proof} (\ref{radgelfand1}) By condition (\ref{gelfand3}) from Proposition (\ref{gelfand}), Proposition \ref{radco} applied to the bounded distributive lattice ${\cal L}(A)$, Corollary \ref{transferad}, (\ref{transferad3}), and Proposition \ref{blpretic}.

\noindent (\ref{radgelfand2}) follows from (\ref{radgelfand1}) and Example \ref{exmvbl}.\end{proof}

(\ref{radgelfand2}) from the previous corollary is actually a well--known result, but this corollary shows an interesting way of deriving this result from a more general context.

\begin{proposition}{\rm \cite{ggcm}}
\begin{enumerate}
\item\label{dinartblp1} The following are equivalent:
\begin{itemize}
\item $A$ is semilocal and ${\rm Rad}(A)$ has BLP;
\item $A$ is semilocal and has BLP;
\item $A$ is semilocal and satisfies $(\star )$;
\item $A$ is isomorphic to a finite direct product of local residuated lattices.
\end{itemize}
\item\label{dinartblp2} The following are equivalent:
\begin{itemize}
\item $A$ is maximal and ${\rm Rad}(A)$ has BLP;
\item $A$ is maximal and has BLP;
\item $A$ is maximal and satisfies $(\star )$;
\item $A$ is isomorphic to a finite direct product of local maximal residuated lattices.\end{itemize}\end{enumerate}\label{dinartblp}\end{proposition}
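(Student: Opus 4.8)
The plan is to establish, in each of the two parts, that the four conditions are equivalent by a single cycle of implications; I will refer to the four bullets of part (\ref{dinartblp1}) as (R), (B), (S), (P) in the order listed. Two of the links come for free: (S)$\Rightarrow$(B) is the implication $(\star )\Rightarrow $ BLP recorded before the statement, and (B)$\Rightarrow$(R) holds because a residuated lattice has BLP exactly when every one of its filters has the Boolean lifting property (Example \ref{resboolex}) and ${\rm Rad}(A)$ is a filter; neither disturbs semilocality. So the whole of part (\ref{dinartblp1}) comes down to (P)$\Rightarrow$(S) and (R)$\Rightarrow$(P), the latter being the substantial step.

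For (P)$\Rightarrow$(S): let $A\cong \prod _{i=1}^nA_i$ with each $A_i$ local, of unique maximal filter $M_i={\rm Rad}(A_i)$. The maximal filters of $A$ are exactly those with $i$-th component $M_i$ and the remaining components improper, so $A$ has $n$ of them and is semilocal, with ${\rm Rad}(A)=\prod _iM_i$. For $(\star )$, fix $x=(x_i)_i\in A$ and put $S=\{i\ :\ x_i\notin M_i\}$; for $i\in S$ the filter $[x_i)$ is not contained in the unique maximal filter $M_i$ of $A_i$, hence is improper, i.e.\ $[x_i)=A_i$. Let $e\in {\cal B}(A)$ have $i$-th component $1$ for $i\notin S$ and $0$ for $i\in S$, and let $u\in A$ have $i$-th component $x_i$ for $i\notin S$ and $1$ for $i\in S$; then $u\in \prod _iM_i={\rm Rad}(A)$, and a coordinatewise computation (Lemma \ref{latrez}) gives $(u\odot e)_i=x_i$ for $i\notin S$ and $(u\odot e)_i=0$ for $i\in S$, whence $[u)\vee [e)=[u\odot e)=\prod _i[x_i)=[x)$, using $[x_i)=A_i=[0)$ for $i\in S$ and that a principal filter of a finite product is the product of the component principal filters. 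So $A$ satisfies $(\star )$.

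For (R)$\Rightarrow$(P): write ${\rm Max}(A)=\{N_1,\ldots ,N_n\}$ and $\overline{A}=A/{\rm Rad}(A)$, which is semisimple, and whose maximal spectrum is, by Lemma \ref{lema2}, the finite --- hence discrete --- space $\{N_1,\ldots ,N_n\}$. By Proposition \ref{clp}, (\ref{clp3}), each singleton being clopen, there are $\alpha _1,\ldots ,\alpha _n\in {\cal B}(\overline{A})$ with $v(\alpha _i)=\{N_i\}$; Proposition \ref{vsidelem}, together with $v(y)=\emptyset \Leftrightarrow y$ nilpotent and $v(y)={\rm Max}(\overline{A})\Leftrightarrow y=1$ in the semisimple $\overline{A}$, gives $\alpha _i\wedge \alpha _j=0$ for $i\neq j$ and $\bigvee _i\alpha _i=1$. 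Lift each $\alpha _i$ through the BLP of ${\rm Rad}(A)$ to $e_i\in {\cal B}(A)$ with $e_i/{\rm Rad}(A)=\alpha _i$. The key remark is that a Boolean element of $A$ inside ${\rm Rad}(A)$ must be $1$: if $b\in {\cal B}(A)\cap {\rm Rad}(A)$ with $b\neq 1$ then $\neg \, b\neq 0$ lies in some maximal filter $M\supseteq {\rm Rad}(A)\ni b$, so $0=b\odot \neg \, b\in M$, absurd. Since $e_i\wedge e_j\equiv 0$ and $\bigvee _ie_i\equiv 1$ modulo ${\rm Rad}(A)$, the Boolean elements $\neg \, (e_i\wedge e_j)=d(e_i\wedge e_j,0)$ and $\bigvee _ie_i=d(\bigvee _ie_i,1)$ lie in ${\rm Rad}(A)$, hence equal $1$; thus $e_i\wedge e_j=0$ for $i\neq j$ and $\bigvee _ie_i=1$. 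Then $\bigcap _i[e_i)=[\bigvee _ie_i)=\{1\}$ and $[e_i)\vee \bigcap _{j\neq i}[e_j)=[\,e_i\wedge \bigvee _{j\neq i}e_j\,)=[0)=A$, so this orthogonal system of Boolean elements yields a direct product decomposition $A\cong \prod _{i=1}^nA/[e_i)$ (a standard fact about residuated lattices, obtained by iterating $A\cong [f)\times [\neg \, f)$ for $f\in {\cal B}(A)$). Finally $A/[e_i)\cong [\neg \, e_i)$ has for maximal spectrum the maximal filters of $A$ containing $e_i$, i.e.\ $v_A(e_i)$, which corresponds under Lemma \ref{lema2} to $v_{\overline{A}}(\alpha _i)=\{N_i\}$ --- a single point; and $A/[e_i)$ is non-trivial since $v(\alpha _i)\neq {\rm Max}(\overline{A})$ forces $\alpha _i\neq 1$, so $e_i\neq 1$. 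Hence each factor is local, which is (P). (For $n=1$ this degenerates harmlessly: $\alpha _1=e_1=1$ and $A$ is itself local.)

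Part (\ref{dinartblp2}) runs in parallel, with ``maximal'' replacing ``semilocal''. The links ``$A$ maximal and $(\star )$''$\Rightarrow$``$A$ maximal and BLP''$\Rightarrow$``$A$ maximal and ${\rm Rad}(A)$ has BLP'' are as above. Since maximal residuated lattices are semilocal, part (\ref{dinartblp1}) turns the last of these into a decomposition $A\cong \prod _{i=1}^nB_i$ with the $B_i$ local; and, as the filters of a finite direct product are products of filters and the cosets decompose coordinatewise, $A$ is maximal iff every $B_i$ is, so each $B_i$ is local and maximal --- the fourth bullet. Conversely, $A\cong \prod _{i=1}^nB_i$ with each $B_i$ local and maximal is maximal by the same coordinatewise criterion and satisfies $(\star )$ by the argument of part (\ref{dinartblp1}). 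I expect the main obstacle to be the implication (R)$\Rightarrow$(P): the delicate points are reading a complete orthogonal system of Boolean elements of $A/{\rm Rad}(A)$ off the clopen sets of its finite discrete maximal spectrum, and transporting it back to a direct product decomposition of $A$ --- precisely where the BLP of the radical, together with the triviality of Boolean elements lying in the radical, is essential.
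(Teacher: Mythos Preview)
The paper does not supply its own proof of this proposition: it is quoted verbatim from \cite{ggcm} and stated without argument, so there is no in-paper proof to compare against. Judged on its own merits, your argument is correct and well organized. The two nontrivial links are handled soundly: for (P)$\Rightarrow$(S) the coordinatewise construction of $u\in{\rm Rad}(A)$ and $e\in{\cal B}(A)$ with $[u)\vee[e)=[x)$ is clean and uses exactly the local hypothesis on each factor; for (R)$\Rightarrow$(P) your route---pass to the semisimple quotient $A/{\rm Rad}(A)$, read off an orthogonal Boolean partition from the finite discrete maximal spectrum via Proposition~\ref{clp},~(\ref{clp3}), lift it through the BLP of ${\rm Rad}(A)$, and then use the observation ${\cal B}(A)\cap{\rm Rad}(A)=\{1\}$ to turn the lifted congruences into genuine equalities $e_i\wedge e_j=0$, $\bigvee_i e_i=1$---is exactly the right mechanism, and the resulting product decomposition with local factors follows. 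The only cosmetic point is that you conflate ${\rm Max}(A)$ with ${\rm Max}(A/{\rm Rad}(A))$ when writing $v(\alpha_i)=\{N_i\}$; strictly the $\alpha_i$ live in $\overline A$ and cut out $\{N_i/{\rm Rad}(A)\}$, but you correct this yourself when invoking Lemma~\ref{lema2} at the end, so no harm is done. Part~(\ref{dinartblp2}) is correctly reduced to part~(\ref{dinartblp1}) via the coordinatewise characterization of maximality in a finite product.
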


\begin{corollary}
\begin{itemize}
\item Semilocal Gelfand residuated lattices have BLP. In particular, semilocal MV--algebras, semilocal BL--algebras and maximal Gelfand residuated lattices have BLP.
\item Moreover, semilocal Gelfand residuated lattices satisfy $(\star )$. In particular, semilocal MV--algebras, semilocal BL--algebras and maximal Gelfand residuated lattices satisfy $(\star )$.
\item Every semilocal Gelfand residuated lattice is isomorphic to a finite direct product of local residuated lattices.
\item Every maximal Gelfand residuated lattice is isomorphic to a finite direct product of local maximal residuated lattices.
\end{itemize}\label{inartblptop}\end{corollary}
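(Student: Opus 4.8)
The plan is to derive the whole corollary by feeding the single new ingredient ``$A$ Gelfand $\Rightarrow {\rm Rad}(A)$ has BLP'' (Corollary \ref{radgelfand}, (\ref{radgelfand1})) into the chains of equivalences already recorded in Proposition \ref{dinartblp}. The point is that the hypothesis required to activate Proposition \ref{dinartblp}, (\ref{dinartblp1}), is ``$A$ semilocal and ${\rm Rad}(A)$ has BLP'', and a semilocal Gelfand residuated lattice satisfies exactly this.

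First I would prove the first bullet: given $A$ semilocal and Gelfand, Corollary \ref{radgelfand}, (\ref{radgelfand1}), gives that ${\rm Rad}(A)$ has BLP, so Proposition \ref{dinartblp}, (\ref{dinartblp1}), applies and in particular yields that $A$ has BLP. For the ``in particular'' clauses: every MV--algebra, and more generally every BL--algebra, is Gelfand by Example \ref{exmvbl}, so a semilocal MV-- or BL--algebra is a semilocal Gelfand residuated lattice and hence has BLP; and since maximal residuated lattices are semilocal (recalled in Section \ref{preliminaries}), a maximal Gelfand residuated lattice is a semilocal Gelfand one and hence has BLP.

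Next, the second and third bullets come for free from the same application of Proposition \ref{dinartblp}, (\ref{dinartblp1}): that proposition's list of equivalent conditions already contains ``$A$ semilocal and satisfies $(\star )$'' and ``$A$ is isomorphic to a finite direct product of local residuated lattices'', so both hold, and the MV--algebra, BL--algebra and maximal--Gelfand cases transfer exactly as in the previous paragraph. For the fourth bullet I would instead invoke Proposition \ref{dinartblp}, (\ref{dinartblp2}): if $A$ is maximal and Gelfand, then ${\rm Rad}(A)$ has BLP by Corollary \ref{radgelfand}, (\ref{radgelfand1}), so $A$ is maximal with ${\rm Rad}(A)$ having BLP, and the last item of that list gives that $A$ is isomorphic to a finite direct product of local maximal residuated lattices.

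I do not expect any genuine obstacle: the corollary is an assembly of Corollary \ref{radgelfand}, Proposition \ref{dinartblp} and Example \ref{exmvbl}, together with the elementary remark that maximal residuated lattices are semilocal. The only point requiring a little care is terminological --- to keep the Gelfand property (the unique--maximal--filter condition (\ref{gelfand4}) of Proposition \ref{gelfand}) distinct from the notion of a \emph{normal} residuated lattice, which, as the discussion preceding the definition of Gelfand residuated lattices emphasizes, already carries a different meaning.
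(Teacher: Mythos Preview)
Your proposal is correct and follows exactly the paper's approach: the paper's proof is simply ``By Corollary \ref{radgelfand} and Proposition \ref{dinartblp}'', and you have spelled out precisely how those two results combine, additionally making explicit the appeals to Example \ref{exmvbl} and to the fact that maximal residuated lattices are semilocal for the ``in particular'' clauses.
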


\begin{proof} By Corollary \ref{radgelfand} and Proposition \ref{dinartblp}.\end{proof}

\begin{theorem}
The following are equivalent:

\begin{enumerate}
\item\label{p5.21} $A$ has BLP;
\item\label{p5.22} ${\rm Spec}(A)$ is strongly zero--dimensional.\end{enumerate}\label{p5.2}\end{theorem}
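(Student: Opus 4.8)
The plan is to prove both implications directly, translating between the filter-theoretic form of BLP and the topological form of strong zero-dimensionality. Concretely, I would use condition (\ref{propblp3}) of Proposition \ref{propblp} as the working form of BLP --- for all $x,y\in A$ with $x\odot y=0$ there is $e\in{\cal B}(A)$ with $e\in[x)$ and $\neg\, e\in[y)$ --- and condition (\ref{multcit3}) of Proposition \ref{multcit} as the working form of strong zero-dimensionality --- every cover ${\rm Spec}(A)=U\cup V$ by two open sets can be refined to a clopen partition ${\rm Spec}(A)=C\cup D$ with $C\subseteq U$ and $D\subseteq V$. The dictionary linking the two is: $D(u)\cup D(v)=D([u)\vee[v))=D([u\odot v))$, so ${\rm Spec}(A)=D(u)\cup D(v)$ exactly when $u\odot v$ is nilpotent (in particular when $u\odot v=0$); for $e\in{\cal B}(A)$, since $e\vee\neg\, e=1$ and $e\odot\neg\, e=0$ one gets $D(e)\cap D(\neg\, e)=\emptyset$ and $D(e)\cup D(\neg\, e)={\rm Spec}(A)$, so $\{D(e),D(\neg\, e)\}$ is always a clopen partition; $D(e)\subseteq D(a)$ iff $e\in[a)$ by Lemma \ref{incld}; and, crucially, ${\rm Clp}({\rm Spec}(A))=\{D(e)\mid e\in{\cal B}(A)\}$ by Proposition \ref{clp}, (\ref{clp1}).

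For (\ref{p5.21})$\Rightarrow$(\ref{p5.22}) I would assume $A$ has BLP, take an arbitrary open cover ${\rm Spec}(A)=U\cup V$ and write $U=D(F)$, $V=D(G)$ with $F,G\in{\rm Filt}(A)$; since $D(F\vee G)=D(F)\cup D(G)={\rm Spec}(A)$ forces $F\vee G=A$, there exist $f\in F$ and $g\in G$ with $f\odot g=0$. Condition (\ref{propblp3}) then yields $e\in{\cal B}(A)$ with $e\in[f)$ and $\neg\, e\in[g)$, and $C:=D(e)$, $D:=D(\neg\, e)$ form a clopen partition of ${\rm Spec}(A)$ (by Corollary \ref{corvdb} and the identities above) with $C\subseteq D(f)\subseteq D(F)=U$ and $D\subseteq D(g)\subseteq D(G)=V$ by Lemma \ref{incld}; Proposition \ref{multcit}, (\ref{multcit3}), then gives that ${\rm Spec}(A)$ is strongly zero-dimensional.

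For (\ref{p5.22})$\Rightarrow$(\ref{p5.21}), by Proposition \ref{propblp} it suffices to verify (\ref{propblp3}), so I would fix $x,y\in A$ with $x\odot y=0$, note ${\rm Spec}(A)=D(x)\cup D(y)$, and apply strong zero-dimensionality to obtain clopen $C\subseteq D(x)$ and $D\subseteq D(y)$ with $C\cap D=\emptyset$ and $C\cup D={\rm Spec}(A)$. By Proposition \ref{clp}, (\ref{clp1}), $C=D(e)$ for some $e\in{\cal B}(A)$, hence $D={\rm Spec}(A)\setminus C=V(e)=D(\neg\, e)$ by Lemma \ref{vsidbool}; then $D(e)\subseteq D(x)$ and $D(\neg\, e)\subseteq D(y)$ give $e\in[x)$ and $\neg\, e\in[y)$ via Lemma \ref{incld}, which is precisely (\ref{propblp3}).

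I do not expect a genuine obstacle: once the argument is routed through Propositions \ref{propblp} and \ref{multcit}, everything reduces to bookkeeping with the sets $D(-)$ and $V(-)$, and the only non-formal ingredient --- that the clopen sets of ${\rm Spec}(A)$ are exactly the $D(e)$ with $e\in{\cal B}(A)$ --- is already Proposition \ref{clp}, (\ref{clp1}), which in turn rests on the Gelfand-type analysis and Lemma \ref{l5.1}. An alternative I would mention but not adopt is to reduce to the bounded distributive lattice ${\cal L}(A)$ through Proposition \ref{blptotretic} and the homeomorphism ${\rm Spec}(A)\cong{\rm Spec}({\cal L}(A))$ of Proposition \ref{retic}, (\ref{retic2}), and then quote \cite[Theorem $3.10$]{sepe}; the direct proof above is preferable for being self-contained.
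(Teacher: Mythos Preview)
Your proposal is correct and follows essentially the same route as the paper's own proof: both directions go through Proposition \ref{multcit}, (\ref{multcit3}), Proposition \ref{propblp}, and Proposition \ref{clp}, (\ref{clp1}), with only cosmetic differences (the paper verifies condition (\ref{propblp2}) of Proposition \ref{propblp} in the converse direction while you verify the equivalent condition (\ref{propblp3}), and the paper writes the second clopen set as $D(f)$ for some $f\in {\cal B}(A)$ before arguing $f\leq \neg\, e$, whereas you identify it directly as $D(\neg\, e)$). One minor inaccuracy in your commentary: Proposition \ref{clp}, (\ref{clp1}), does not rely on Lemma \ref{l5.1} or any Gelfand-type input --- that is only needed for part (\ref{clp2}).
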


\begin{proof} (\ref{p5.21})$\Rightarrow $(\ref{p5.22}): We will verify the condition (\ref{multcit3}) from Proposition \ref{multcit}. Let $U$ and $V$ be open sets in ${\rm Spec}(A)$ such that ${\rm Spec}(A)=U\cup V$. Then $U=D(F)$ and $V=D(G)$ for some filters $F$, $G$ of $A$. Thus $D(F\vee G)=D(F)\cup D(G)={\rm Spec}(A)$ by Proposition \ref{vsid}, (\ref{vsid1}), so $F\vee G= A$, hence there exist $x\in F$ and $y\in G$ such that $x\odot y=0$, hence $y\leq \neg \, x$ by Lemma \ref{latrez}, (\ref{latrez1}). Thus $\neg \, x\in G$. The fact that $A$ has BLP and Proposition \ref{propblp} imply that there exists $e\in {\cal B}(A)$ such that $e\in [x)\subseteq F$ and $\neg \, e\in [\neg \, x)\subseteq G$, therefore $D(e)\subseteq D(F)$ and $D(\neg \, e)\subseteq D(G)$. According to Corollary \ref{corvdb}, $D(e)$ and $D(\neg \, e)$ are clopen subsets of ${\rm Spec}(A)$. By Lemma \ref{latrez}, (\ref{latrez1}), Lemma \ref{cbool}, (\ref{cbool1}), and Proposition \ref{vsidelem}, we have $D(e)\cup D(\neg \, e)=D(e\odot \neg \, e)=D(0)={\rm Spec}(A)$ and $D(e)\cap D(\neg \, e)=D(e\vee \neg \, e)=D(1)=\emptyset $. 

\noindent (\ref{p5.22})$\Rightarrow $(\ref{p5.21}): Let $a\in A$. By Lemma \ref{latrez}, (\ref{latrez1}), and Proposition \ref{vsidelem}, $D(a)\cup D(\neg \, a)=D(a\odot \neg \, a)=D(0)={\rm Spec}(A)$, hence, by condition (\ref{multcit3}) from Proposition \ref{multcit}, there exist two clopen sets $C$ and $D$ in ${\rm Spec}(A)$ such that $C\subseteq D(a)$, $D\subseteq D(\neg \, a)$, $C\cap D=\emptyset $ and $C\cup D={\rm Spec}(A)$. Proposition \ref{clp} ensures us that $C=D(e)$ and $D=D(f)$ for some $e,f\in {\cal B}(A)$. Then ${\rm Spec}(A)=D(e)\cup D(f)=D(e\odot f)$ by Proposition \ref{vsidelem}, hence $e\odot f=0$, thus $f\leq \neg \, e$ by Lemma \ref{latrez}, (\ref{latrez1}), and so $D(e)=C\subseteq D(a)$ and $D(\neg \, e)\subseteq D(f)=D\subseteq D(\neg \, a)$. By Lemma \ref{incld}, it follows that $[e)\subseteq [a)$ and $[\neg \, e)\subseteq [\neg \, a)$, hence $e\in [a)$ and $\neg \, e\in [\neg \, a)$. Therefore $A$ has BLP by Proposition \ref{propblp}.\end{proof}

\begin{theorem} The following assertions are equivalent:

\begin{enumerate}
\item\label{p5.31} $A$ has BLP;
\item\label{p5.32} for any $M,N\in {\rm Max}(A)$ such that $M\neq N$, there exists an $e\in {\cal B}(A)$ such that $e\in M$ and $\neg \, e\in N$;
\item\label{p5.33} the family ${\cal E}=\{d(e)\ |\ e\in {\cal B}(A)\}=\{v(e)\ |\ e\in {\cal B}(A)\}$ is a basis of ${\rm Max}(A)$;
\item\label{p5.310} for any $a\in A$, there exists an $e\in {\cal B}(A)$ such that $v(a)\subseteq d(e)$ and $v(\neg \, a)\subseteq v(e)$;
\item\label{p5.35} $A$ is Gelfand and ${\rm Max}(A)$ is zero--dimensional;
\item\label{p5.36} $A$ is Gelfand and ${\rm Max}(A)$ is strongly zero--dimensional;
\item\label{p5.37} $A$ is Gelfand and ${\rm Max}(A)$ is normal;
\item\label{p5.38} $A$ is Gelfand and ${\rm Max}(A)$ is Boolean.
\end{enumerate}\label{p5.3}\end{theorem}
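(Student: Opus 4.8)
The plan is to run the cycle $(\ref{p5.31})\Rightarrow(\ref{p5.32})\Rightarrow(\ref{p5.33})\Rightarrow(\ref{p5.310})\Rightarrow(\ref{p5.35})\Rightarrow(\ref{p5.31})$ and then get the block $(\ref{p5.35})\Leftrightarrow(\ref{p5.36})\Leftrightarrow(\ref{p5.37})\Leftrightarrow(\ref{p5.38})$ for free: each of (\ref{p5.35})--(\ref{p5.38}) just conjoins the common hypothesis ``$A$ is Gelfand'' with one of the four properties of ${\rm Max}(A)$ that Corollary \ref{coolmix} declares equivalent, so the four assertions are pairwise equivalent at once. The recurring mechanism in the cycle is that a closed subset of the compact space ${\rm Max}(A)$ is compact, so a cover of it by basic clopen sets $d(e)$, $e\in{\cal B}(A)$, reduces to a finite one, whose union is again of the form $d(f)$ with $f$ a finite $\odot$--product, hence a meet, of Boolean elements (Lemma \ref{cbool}, (\ref{cbool3}), and Proposition \ref{vsidelem}, (\ref{vsidelem1})).

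For $(\ref{p5.31})\Rightarrow(\ref{p5.32})$: given $M\neq N$ in ${\rm Max}(A)$ neither contains the other, so pick $a\in M\setminus N$; Lemma \ref{caractfiltmax} supplies $n\in\N^*$ with $\neg \, a^n\in N$, and since $a^n\odot\neg \, a^n=0$ (Lemma \ref{latrez}, (\ref{latrez1})), Proposition \ref{propblp}, (\ref{propblp3}), applied to $a^n$ and $\neg \, a^n$ yields $e\in{\cal B}(A)$ with $e\in[a^n)=[a)\subseteq M$ and $\neg \, e\in[\neg \, a^n)\subseteq N$. For $(\ref{p5.32})\Rightarrow(\ref{p5.33})$: it suffices to realize each basic open set $d(a)$ with $a$ non--nilpotent (the nilpotent case gives $d(a)=d(0)\in{\cal E}$) as a union of members of ${\cal E}$; if $M\in d(a)$, each $N\in v(a)$ differs from $M$, so (\ref{p5.32}) yields $e_N\in{\cal B}(A)$ with $e_N\in M$ and $N\in v(\neg \, e_N)=d(e_N)$ (Lemma \ref{vsidbool}); shrinking the cover of $v(a)$ to a finite one and letting $f$ be the product of the relevant $e_N$'s gives $f\in{\cal B}(A)$, $f\in M$ and $v(a)\subseteq d(f)$, i.e. $M\in v(f)=d(\neg \, f)\subseteq d(a)$ with $d(\neg \, f)\in{\cal E}$. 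For $(\ref{p5.33})\Rightarrow(\ref{p5.310})$: for $a\in A$ we have $v(a)\cap v(\neg \, a)=v(a\odot\neg \, a)=v(0)=\emptyset$, so the compact set $v(\neg \, a)$ lies in the open set $d(a)$, and the same finite--cover argument produces $f\in{\cal B}(A)$ with $v(\neg \, a)\subseteq d(f)\subseteq d(a)$; then $e:=\neg \, f$ satisfies $v(\neg \, a)\subseteq d(f)=v(e)$ and $v(a)\subseteq v(f)=d(e)$ by Lemma \ref{vsidbool}.

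For $(\ref{p5.310})\Rightarrow(\ref{p5.35})$ I would show two things. First, $A$ is Gelfand: (\ref{p5.310}) recovers (\ref{p5.32}) --- given $M\neq N$ pick $a\in N\setminus M$, take $n$ with $\neg \, a^n\in M$, apply (\ref{p5.310}) to $a^n$; then $N\in v(a^n)\subseteq d(e)$ together with $e^n=e$ and Lemma \ref{caractfiltmax} forces $\neg \, e\in N$, while $M\in v(\neg \, a^n)\subseteq v(e)$ gives $e\in M$ --- and (\ref{p5.32}) in turn exhibits, for any two distinct maximal filters, the disjoint open sets $D(\neg \, e)\ni M$ and $D(e)\ni N$ of ${\rm Spec}(A)$, so $A$ is Gelfand by Proposition \ref{gelfand}, (\ref{gelfand14}). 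Second, ${\rm Max}(A)$ is zero--dimensional: for $M\in d(a)$ choose $n$ with $\neg \, a^n\in M$, apply (\ref{p5.310}) to $a^n$, and use $v(a^n)=v(a)$ to obtain $e\in{\cal B}(A)$ with $M\in v(e)\subseteq d(a)$, where $v(e)=d(\neg \, e)$ is clopen by Corollary \ref{corvdb}.

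The crux is $(\ref{p5.35})\Rightarrow(\ref{p5.31})$, which I would settle by checking the criterion of Proposition \ref{propblp}, (\ref{propblp2}). Fix $a\in A$; as before $v(a)\cap v(\neg \, a)=\emptyset$, so the closed set $v(a)$ lies in the open set $d(\neg \, a)$, and strong zero--dimensionality of ${\rm Max}(A)$ (Corollary \ref{coolmix}) inserts a clopen set between them --- which, because $A$ is Gelfand, has the form $d(f)$ with $f\in{\cal B}(A)$ by Proposition \ref{clp}, (\ref{clp2}). Thus $v(a)\subseteq d(f)\subseteq d(\neg \, a)$; passing to complements and reading the two inclusions through Lemma \ref{vsidbool} and Lemma \ref{ltextnou}, (\ref{ltextnou1}), shows $a\odot f$ and $\neg \, a\odot\neg \, f$ are nilpotent, and since $f$, $\neg \, f$ are idempotent with $\neg \,\neg \, f=f$ (Lemma \ref{cbool}, (\ref{cbool3}) and (\ref{cbool0})), Lemma \ref{latrez}, (\ref{latrez1}), turns this into $a^m\leq\neg \, f$ and $(\neg \, a)^p\leq f$ for suitable $m,p\in\N^*$; that is, $e:=\neg \, f\in{\cal B}(A)$ satisfies $e\in[a)$ and $\neg \, e=f\in[\neg \, a)$, exactly condition (\ref{propblp2}), so $A$ has BLP. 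The main obstacle is precisely this last implication --- transferring the clopen separation available in ${\rm Max}(A)$ back into a genuine Boolean element of $A$ --- and it is here that the Gelfand hypothesis, via Proposition \ref{clp}, (\ref{clp2}), is indispensable.
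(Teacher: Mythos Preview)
Your argument is correct. The ingredients and key lemmas (Proposition \ref{propblp}, Lemma \ref{caractfiltmax}, Lemma \ref{ltextnou}, Proposition \ref{clp}, Corollary \ref{coolmix}) are exactly those of the paper, and each step you give checks out.

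The only real difference from the paper is how the cycle is closed. The paper runs $(\ref{p5.31})\Rightarrow(\ref{p5.32})\Rightarrow(\ref{p5.33})\Rightarrow(\ref{p5.310})\Rightarrow(\ref{p5.31})$ and treats $(\ref{p5.33})\Leftrightarrow(\ref{p5.35})$ on the side; in particular it closes the cycle with a very short direct step $(\ref{p5.310})\Rightarrow(\ref{p5.31})$: from $v(a)\subseteq d(e)$ and $v(\neg \, a)\subseteq v(e)=d(\neg \, e)$, Lemma \ref{ltextnou} immediately gives that $a\odot e$ and $\neg \, a\odot \neg \, e$ are nilpotent, hence $\neg \, e\in[a)$ and $e\in[\neg \, a)$, so Proposition \ref{propblp}, (\ref{propblp2}), applies. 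You instead route through $(\ref{p5.310})\Rightarrow(\ref{p5.35})\Rightarrow(\ref{p5.31})$, which costs you the extra work of invoking strong zero--dimensionality and Proposition \ref{clp}, (\ref{clp2}), to close the loop. What you gain is an explicit verification that (\ref{p5.32})/(\ref{p5.310}) imply the Gelfand property via Proposition \ref{gelfand}, (\ref{gelfand14}); the paper's ``$(\ref{p5.33})\Rightarrow(\ref{p5.35})$: By Corollary \ref{corvdb}'' is terse on exactly this point and tacitly relies on the already--established equivalence with (\ref{p5.32}). So your longer cycle is a fair trade: the paper's closure is slicker, yours makes the Gelfand claim self--contained.
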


\begin{proof} (\ref{p5.31})$\Rightarrow $(\ref{p5.32}): Assume that $A$ has BLP and let $M$, $N$ be distinct maximal filters of $A$. Then $M\vee N=A$, so there exist $a\in M$ and $b\in N$ such that $a\odot b=0$, hence $b\leq \neg \, a$ by Lemma \ref{latrez}, (\ref{latrez1}). By Proposition \ref{propblp}, there exists $e\in {\cal B}(A)$ such that $e\in [a)\subseteq M$ and $\neg \, e\in [\neg \, a)\subseteq [b)\subseteq N$, hence $e\in M$ and $\neg \, e\in N$.

\noindent (\ref{p5.32})$\Rightarrow $(\ref{p5.33}): The second equality in (\ref{p5.33}) comes from Lemma \ref{vsidbool}. Now let $U$ be an open subset of ${\rm Max}(A)$, so that $K={\rm Max}(A)\setminus U$ is closed, and let $M\in U$, arbitrary. Then the hypothesis (\ref{p5.32}) of this implication ensures us that, for every $N\in K$, there exists $e_N\in {\cal B}(A)$ such that $e_N\in N$ and $\neg \, e_N\in M$. So $N\in v(e_N)$ for all $N\in K$, hence $\{v(e_N)\ |\ N\in K\}$ is an open cover of $K$ in ${\rm Max}(A)$, according to Corollary \ref{corvdb}. $K$ is a closed subset of the compact space ${\rm Max}(A)$, so $K$ is also compact. Hence there exist $k\in \N ^*$ and $N_1,\ldots ,N_k\in K$ such that $\displaystyle K\subseteq \bigcup _{i=1}^kv(e_{\scriptstyle N_{\scriptstyle i}})=v(e)$, where, according to Proposition \ref{vsidelem}, (\ref{vsidelem2}), $\displaystyle e=\bigvee _{i=1}^ke_{\scriptstyle N_{\scriptstyle i}}\in {\cal B}(A)$. Therefore $K\subseteq {\rm Max}(A)\setminus d(e)$, hence $d(e)\subseteq {\rm Max}(A)\setminus K=U$. By Lemma \ref{cbool}, (\ref{cbool0}), and the fact that $\neg \, e_1,\ldots ,\neg \, e_k\in M$, $\displaystyle \neg \, e=\neg \, (\bigvee _{i=1}^ke_i)=\bigwedge _{i=1}^k\neg \, e_i\in M$, thus $M\in d(e)$. We have obtained that $M\in d(e)\subseteq U$. Therefore ${\cal E}$ is a basis for ${\rm Max}(A)$.

\noindent (\ref{p5.33})$\Rightarrow $(\ref{p5.310}): The hypothesis of this implication and Corollary \ref{corvdb} show that ${\rm Max}(A)$ has a basis of clopen subsets, so it is zero--dimensional, thus Boolean by Corollary \ref{coolmix}. Now let $K$ be a clopen subset of ${\rm Max}(A)$, hence, by condition (\ref{p5.33}), $\displaystyle K=\bigcup _{i\in I}d(e_i)$ for some non--empty family $(e_i)_{i\in I}$ of Boolean elements of $A$. ${\rm Max}(A)$ is compact and $K$ is closed in ${\rm Max}(A)$, hence $K$ is compact, thus there exists a finite non--empty subset $J$ of $I$ such that $\displaystyle K=\bigcup _{i\in J}d(e_i)=d(e)$, where $\displaystyle e=\bigwedge _{i\in J}e_i\in {\cal B}(A)$. We have applied Proposition \ref{vsidelem}, (\ref{vsidelem1}). Hence $K=d(e)\in {\cal E}$, that is any clopen subset of ${\rm Max}(A)$ belongs to the family ${\cal E}$ of clopen subsets of ${\rm Max}(A)$, which means that ${\rm Clp}({\rm Max}(A))={\cal E}$. Let $a\in A$. Using the fact that $d(a)=d([a))$, Proposition \ref{vsidelem}, (\ref{vsidelem1}) and Lemma \ref{latrez}, (\ref{latrez1}), we get that $d(a)\cup d(\neg \, a)=d(a)\cup d(\neg \, a)=d(a \odot \neg \, a)=d(0)={\rm Max}(A)$. But both $d(a)$ and $d(\neg \, a)$ are open subsets of ${\rm Max}(A)$, hence $d(a),d(\neg \, a)\in {\rm Clp}({\rm Max}(A))={\cal E}$, that is there exist $e,f\in {\cal B}(A)$ such that $d(a)=d(f)$ and $d(\neg \, a)=d(e)$, therefore $v(a)=v(f)$ and $v(\neg \, a)=v(e)$ and, again by Proposition \ref{vsidelem}, (\ref{vsidelem1}), and Lemma \ref{latrez}, (\ref{latrez1}), $d(f\odot e)=d(f)\cup d(e)=d(a)\cup d(\neg \, a)={\rm Max}(A)$, thus $f\odot e=0$, hence $f\leq \neg \, e$, and $d(e)\cup d(\neg \, e)=d(e\odot \neg \, e)=d(0)={\rm Max}(A)$, which means that $v(a)=v(f)\subseteq v(\neg \, e)={\rm Max}(A)\setminus d(\neg \, e)\subseteq d(e)$, so $v(a)\subseteq d(e)$.

\noindent (\ref{p5.310})$\Rightarrow $(\ref{p5.31}): Let $a\in A$, arbitrary. Let $e\in {\cal B}(A)$ such that $v(a)\subseteq d(e)$ and $v(\neg \, a)\subseteq v(e)=d(\neg \, e)$, according to Lemma \ref{vsidbool}. By Lemma \ref{ltextnou} Lemma \ref{cbool}, (\ref{cbool3}), and Lemma \ref{latrez}, (\ref{latrez1}), from the fact that $v(a)\subseteq d(e)$ we get that $a\odot e$ is nilpotent, thus there exists an $n\in \N ^*$ such that $(a\odot e)^n=a^n\odot e^n=a^n\odot e=0$, hence $a^n\leq \neg \, e$, so $\neg \, e\in [a)$. Analogously, from the fact that $v(\neg \, a)\subseteq d(\neg \, e)$ we get that $e=\neg \, \neg \, e\in [\neg \, a)$, by Lemma \ref{cbool}, (\ref{cbool0}). According to Proposition \ref{propblp}, this means that $A$ has BLP.

\noindent (\ref{p5.33})$\Rightarrow $(\ref{p5.35}): By Corollary \ref{corvdb}.

\noindent (\ref{p5.35})$\Rightarrow $(\ref{p5.33}): By Proposition \ref{clp}.

\noindent (\ref{p5.35})$\Leftrightarrow $(\ref{p5.36})$\Leftrightarrow $(\ref{p5.37})$\Leftrightarrow $(\ref{p5.38}): By Corollary \ref{coolmix}.\end{proof}

\begin{remark}
Since $(\star )$ implies BLP, it follows that $(\star )$ implies each of the conditions in the previous theorem. As shown by Proposition \ref{dinartblp}, if $A$ is semilocal, then, furthermore, each of the conditions in Theorem \ref{p5.3} is equivalent to $(\star )$. Next we give a topological characterization for $(\star )$, too.\end{remark}

\begin{theorem}
The following are equivalent:

\begin{enumerate}
\item\label{charstar0} $A$ satisfies $(\star )$;
\item\label{charstar2} for any $a\in A$, there exists an $e\in {\cal B}(A)$ such that $a\odot e$ is nilpotent and $a\vee e\in {\rm Rad}(A)$;
\item\label{charstar3} for any $a\in A$, there exists an $e\in {\cal B}(A)$ such that $v(a)\subseteq d(e)$ and $d(a)\subseteq v(e)$;
\item\label{charstar1} for any $a\in A$, there exists an $e\in {\cal B}(A)$ such that $v(a)\subseteq d(e)$ and, for all $n\in \N ^*$, $v(\neg \, a^n)\subseteq v(e)$.\end{enumerate}
\label{charstar}\end{theorem}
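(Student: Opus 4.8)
The plan is to close the loop $(\ref{charstar0})\Rightarrow(\ref{charstar2})\Rightarrow(\ref{charstar0})$ by a direct computation with principal filters, and then to deduce $(\ref{charstar2})\Leftrightarrow(\ref{charstar3})$ and $(\ref{charstar3})\Leftrightarrow(\ref{charstar1})$ almost verbatim from Lemma \ref{ltextnou} and Lemma \ref{l4textnou}. Throughout I would use the computation rules for principal filters from Section \ref{preliminaries}, namely $[a)\vee[b)=[a\odot b)$ and $[a^n)=[a)$ for $n\in\N^*$, together with the fact that a Boolean element is idempotent (Lemma \ref{cbool}, (\ref{cbool3})).

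For $(\ref{charstar0})\Rightarrow(\ref{charstar2})$, given $x\in A$ I would apply $(\star)$ to obtain $u\in{\rm Rad}(A)$ and $e\in{\cal B}(A)$ with $[x)=[u)\vee[e)=[u\odot e)$, and claim that $\neg \, e\in{\cal B}(A)$ (Lemma \ref{cbool}, (\ref{cbool0})) is the desired witness. From $[e)\subseteq[x)$ one gets $x^n\leq e$ for some $n\in\N^*$, hence $(x\odot\neg \, e)^n=x^n\odot\neg \, e\leq e\odot\neg \, e=0$ by Lemma \ref{latrez}, (\ref{latrez1}), so $x\odot\neg \, e$ is nilpotent. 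From $x\in[u\odot e)$ one gets $u^k\odot e=(u\odot e)^k\leq x$ for some $k\in\N^*$, and then $u^k=(u^k\odot e)\vee(u^k\odot\neg \, e)\leq x\vee\neg \, e$ by distributivity of $\odot$ over $\vee$ (Lemma \ref{latrez}, (\ref{latrez9})) and $e\vee\neg \, e=1$ (Lemma \ref{cbool}, (\ref{cbool1})); hence $x\vee\neg \, e\in[u^k)=[u)\subseteq{\rm Rad}(A)$, as wanted.

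For $(\ref{charstar2})\Rightarrow(\ref{charstar0})$, given $x\in A$ I would pick $e\in{\cal B}(A)$ with $x\odot e$ nilpotent and $x\vee e\in{\rm Rad}(A)$, set $u=x\vee e\in{\rm Rad}(A)$, and verify $[x)=[u)\vee[\neg \, e)$. From $(x\odot e)^m=x^m\odot e=0$ for some $m\in\N^*$ we get $x^m\leq\neg \, e$, so $[\neg \, e)\subseteq[x^m)=[x)$; then, using $e\odot\neg \, e=0$ and distributivity,
$$[u)\vee[\neg \, e)=[(x\vee e)\odot\neg \, e)=[(x\odot\neg \, e)\vee(e\odot\neg \, e))=[x\odot\neg \, e)=[x)\vee[\neg \, e)=[x).$$
Since $\neg \, e\in{\cal B}(A)$ and $u\in{\rm Rad}(A)$, this is exactly $(\star)$ for $x$.

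The remaining two equivalences are mere reformulations. For $(\ref{charstar2})\Leftrightarrow(\ref{charstar3})$: for $e\in{\cal B}(A)$, Lemma \ref{ltextnou}, (\ref{ltextnou2}), says ``$a\odot e$ is nilpotent'' is equivalent to ``$v(a)\subseteq d(e)$'', and Lemma \ref{ltextnou}, (\ref{ltextnou3}), says ``$a\vee e\in{\rm Rad}(A)$'' is equivalent to ``$d(a)\subseteq v(e)$''. For $(\ref{charstar3})\Leftrightarrow(\ref{charstar1})$: by Lemma \ref{l4textnou}, $d(a)=\bigcup_{n=1}^{\infty}v(\neg \, a^n)$, so $d(a)\subseteq v(e)$ holds iff $v(\neg \, a^n)\subseteq v(e)$ for every $n\in\N^*$. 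I do not expect a genuine obstacle: the one place that needs attention is the principal-filter bookkeeping in the first two implications, especially the passages $[u)\vee[e)=[u\odot e)$ and $[x\odot\neg \, e)=[x)\vee[\neg \, e)$ and the repeated use of idempotency of Boolean elements to collapse powers; everything else is immediate from the cited lemmas.
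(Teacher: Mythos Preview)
Your proposal is correct and follows essentially the same route as the paper's proof: the cycle $(\ref{charstar0})\Leftrightarrow(\ref{charstar2})$ is handled by the same principal-filter bookkeeping (with the Boolean witness negated in passing from one side to the other), and $(\ref{charstar2})\Leftrightarrow(\ref{charstar3})$ and $(\ref{charstar3})\Leftrightarrow(\ref{charstar1})$ are read off from Lemmas \ref{ltextnou} and \ref{l4textnou} exactly as in the paper. The only cosmetic difference is that in $(\ref{charstar0})\Rightarrow(\ref{charstar2})$ the paper obtains $u^n\leq x\vee\neg\,e$ via residuation and Lemma \ref{cbool}, (\ref{cbool2}), whereas you obtain the same inequality by writing $u^k=u^k\odot(e\vee\neg\,e)$ and distributing; both are immediate.
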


\begin{proof} We shall make repeated use of Proposition \ref{cbool}, (\ref{cbool3}) and (\ref{cbool0}), and Proposition \ref{latrez}, (\ref{latrez1}) and (\ref{latrez6}).

\noindent (\ref{charstar0})$\Rightarrow $(\ref{charstar2}): Let $a\in A$. Since $A$ satisfies $(\star )$, it follows that there exist $u\in {\rm Rad}(A)$ and $f\in {\cal B}(A)$ such that $[a)=[u)\vee [f)=[u\odot f)$, thus $a^n\leq u\odot f$ and $(u\odot f)^n=u^n\odot f\leq a$ for some $n\in \N ^*$. Denote $e=\neg \, f\in {\cal B}(A)$. Then $(a\odot e)^n=a^n\odot e\leq u\odot f\odot e=u\odot f\odot \neg \, f=u\odot 0=0$, so $a\odot e$ is nilpotent. By the law of residuation and Proposition \ref{cbool}, (\ref{cbool2}), $u^n\odot f\leq a$ is equivalent to $u^n\leq f\rightarrow a=\neg \, f\vee a=e\vee a$; since $u\in {\rm Rad}(A)$, we have $u^n\in {\rm Rad}(A)$, hence $e\vee a\in {\rm Rad}(A)$.

\noindent (\ref{charstar2})$\Rightarrow $(\ref{charstar0}): Let $a\in A$. Then there exists $e\in {\cal B}(A)$ such that $a\odot e$ is nilpotent and $a\vee e\in {\rm Rad}(A)$. Denote $u=a\vee e\in {\rm Rad}(A)$ and $f=\neg \, e\in {\cal B}(A)$. Then, by Proposition \ref{latrez}, (\ref{latrez9}), $u\odot \neg \, e=(a\vee e)\odot \neg \, e=(a\odot \neg \, e)\vee (e\odot \neg \, e)=(a\odot \neg \, e)\vee 0=(a\odot \neg \, e)$, thus $u\odot f=a\odot f\leq a$, so $u\odot f\leq a$. The fact that $a\odot e$ is nilpotent shows that $(a\odot e^n)=a^n\odot e^n=a^n\odot e=0$ for some $n\in \N ^*$, hence $a^n\leq \neg \, e=f$, so $a^{n+1}\leq a\odot f=u\odot f\leq a$, hence $[a)=[a^{n+1})\supseteq [u\odot f)\supseteq [a)$, thus $[a)=[u\odot f)=[u)\vee [f)$. Therefore $A$ satisfies $(\star )$.

\noindent (\ref{charstar2})$\Leftrightarrow $(\ref{charstar3}):
 By Lemma \ref{ltextnou}.

\noindent (\ref{charstar3})$\Leftrightarrow $(\ref{charstar1}):
 By Lemma \ref{l4textnou}.\end{proof}

The statement on MV--algebras and BL--algebras in the next result follows from Corollary \ref{inartblptop}.

\begin{corollary} If $A$ is Gelfand, then the following are equivalent:

\begin{enumerate}
\item\label{clui5.31} $A$ has BLP;
\item\label{clui5.33} the family ${\cal E}=\{d(e)\ |\ e\in {\cal B}(A)\}=\{v(e)\ |\ e\in {\cal B}(A)\}$ is a basis of ${\rm Max}(A)$;
\item\label{clui5.35} ${\rm Max}(A)$ is zero--dimensional;
\item\label{clui5.36} ${\rm Max}(A)$ is strongly zero--dimensional;
\item\label{clui5.37} ${\rm Max}(A)$ is normal;
\item\label{clui5.38} ${\rm Max}(A)$ is Boolean.
\end{enumerate}

In particular, if $A$ is an MV--algebra or a BL--algebra, then the previous assertions are equivalent.\label{clui5.3}\end{corollary}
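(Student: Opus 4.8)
The plan is to derive this corollary directly from Theorem~\ref{p5.3}, which already contains all the substantive content, so that essentially no new argument is needed. First I would recall that Theorem~\ref{p5.3} asserts the pairwise equivalence, for an \emph{arbitrary} residuated lattice $A$, of its eight listed conditions; among these, conditions (\ref{p5.35}), (\ref{p5.36}), (\ref{p5.37}) and (\ref{p5.38}) each have the shape ``$A$ is Gelfand and ${\rm Max}(A)$ is $\mathrm{P}$'', where $\mathrm{P}$ runs through \emph{zero--dimensional}, \emph{strongly zero--dimensional}, \emph{normal}, \emph{Boolean}, respectively, while condition (\ref{p5.33}) is the assertion that $\mathcal{E}$ is a basis of ${\rm Max}(A)$ and condition (\ref{p5.31}) is that $A$ has BLP.

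Under the standing hypothesis of the corollary, namely that $A$ is Gelfand, the conjunct ``$A$ is Gelfand'' is automatically true, so conditions (\ref{p5.35})--(\ref{p5.38}) of Theorem~\ref{p5.3} reduce to simply ``${\rm Max}(A)$ is $\mathrm{P}$'', which are precisely conditions (\ref{clui5.35})--(\ref{clui5.38}) of the present corollary. Moreover condition (\ref{p5.33}) of Theorem~\ref{p5.3} is, word for word, condition (\ref{clui5.33}) here, and condition (\ref{p5.31}) there is condition (\ref{clui5.31}) here. Thus each condition in the corollary appears among the pairwise equivalent conditions of Theorem~\ref{p5.3}, hence all six of them are pairwise equivalent, which is the claim. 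For the closing sentence, I would invoke Example~\ref{exmvbl}, which records (via Remark~\ref{blgelfand}) that every MV--algebra and every BL--algebra is a Gelfand residuated lattice; applying the equivalences just obtained to such an $A$ yields the final assertion.

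The main obstacle here is essentially nonexistent: the hard topological work has already been carried out in Theorem~\ref{p5.3} together with the lemmas feeding into it (notably Proposition~\ref{clp}, Corollary~\ref{corvdb}, Lemma~\ref{ltextnou}, and the Gelfand machinery of Proposition~\ref{gelfand} and Proposition~\ref{retract}). The only point that requires a modicum of care is the bookkeeping that matches the numbered conditions across the two statements and observes that the Gelfand hypothesis collapses each ``$A$ is Gelfand and $\dots$'' clause of Theorem~\ref{p5.3} to its topological part.
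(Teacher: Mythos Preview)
Your proposal is correct and matches the paper's approach: the corollary is stated without a separate proof, being an immediate specialization of Theorem~\ref{p5.3} under the Gelfand hypothesis. The only minor difference is that for the final sentence on MV-- and BL--algebras the paper cites Corollary~\ref{inartblptop} rather than Example~\ref{exmvbl}; your reference to Example~\ref{exmvbl} is in fact the more direct one, since all that is needed is that such algebras are Gelfand.
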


We recall that, if $(G,u)$ is an abelian unital lattice--orderred--group (in brief, $l$--group), in additive notation, and $G_+=\{a\in G\ |\ 0\leq a\}$, then an element $e\in G$ is called a {\em component} of $G$ iff there exists an $f\in G$ such that $e\wedge f=0$ and $e+f$ is a strong unit of $G$. By \cite[p. 984]{hkg}, $(G,u)$ is said to be {\em clean} iff, for every $a\in G$, there exists a strong unit $v\in G$ and a component $e\in G$ such that $a=v+e$. By ${\rm Max}(G)$ we denote the set of maximal proper convex $l$--subgroups of $G$ endowed with the hull--kernel topology; ${\rm Max}(G)$ is a compact Hausdorff space. If $A$ is an MV--algebra and $(G,u)$ is the abelian unital $l$--group associated to $A$ through the Mundici correspondence, then the topological spaces ${\rm Max}(A)$ and ${\rm Max}(G)$ are homeomorphic, according to \cite{cdom} and \cite{ile}. The following result establishes the relation between MV--algebras with BLP and clean $l$--groups. 

\begin{proposition}
Let $A$ be an MV--algebra and $(G,u)$ its associated unital $l$--group. Then the following are equivalent:

\begin{enumerate}
\item\label{marvel1} $A$ has BLP; 
\item\label{marvel8} ${\rm Max}(A)$ is Boolean;
\item\label{marvel8g} ${\rm Max}(G)$ is Boolean;
\item\label{marvel1g} $(G,u)$ is a clean unital $l$--group.\end{enumerate}\label{marvel}\end{proposition}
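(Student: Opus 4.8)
The plan is to obtain the cycle of equivalences by assembling results already proved together with the Mundici correspondence, so that essentially no new computation is needed; I would prove (\ref{marvel1})$\Leftrightarrow$(\ref{marvel8})$\Leftrightarrow$(\ref{marvel8g})$\Leftrightarrow$(\ref{marvel1g}).

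First I would prove (\ref{marvel1})$\Leftrightarrow$(\ref{marvel8}). Since $A$ is an MV--algebra, it is a Gelfand residuated lattice by Example \ref{exmvbl}; hence Corollary \ref{clui5.3} applies (equivalently, one uses the equivalence (\ref{p5.31})$\Leftrightarrow$(\ref{p5.38}) of Theorem \ref{p5.3} together with Example \ref{exmvbl}), and this gives at once that $A$ has BLP if and only if ${\rm Max}(A)$ is a Boolean space. Next, (\ref{marvel8})$\Leftrightarrow$(\ref{marvel8g}) is immediate from the fact recalled just before the statement, due to \cite{cdom} and \cite{ile}, that the topological spaces ${\rm Max}(A)$ and ${\rm Max}(G)$ are homeomorphic: being Boolean (that is, compact, Hausdorff and zero--dimensional) is preserved and reflected by homeomorphisms.

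Finally, for (\ref{marvel8g})$\Leftrightarrow$(\ref{marvel1g}) I would invoke the $l$--group analogue of the McGovern-type characterization of clean objects through the maximal spectrum (the counterpart of \cite[Theorem $1.7$]{mcgov}). Here one uses that ${\rm Max}(G)$ is always compact and Hausdorff, so that it is Boolean precisely when it is zero--dimensional, and that the cleanness of the unital $l$--group $(G,u)$ is equivalent to exactly this condition; this is the content of the relevant result on clean unital $l$--groups in \cite{hkg}. I expect this last equivalence to be the only genuinely non-routine point, and it is the one for which we appeal to the external literature rather than reprove it; everything else is bookkeeping with Corollary \ref{clui5.3}, the homeomorphism ${\rm Max}(A)\cong{\rm Max}(G)$, and the remark that on a compact Hausdorff space zero--dimensionality coincides with Booleanness.
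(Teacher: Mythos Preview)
Your proposal is correct and follows essentially the same route as the paper: the paper proves (\ref{marvel1})$\Leftrightarrow$(\ref{marvel8}) via Corollary \ref{clui5.3}, (\ref{marvel8})$\Leftrightarrow$(\ref{marvel8g}) via the homeomorphism ${\rm Max}(A)\cong{\rm Max}(G)$, and (\ref{marvel8g})$\Leftrightarrow$(\ref{marvel1g}) by citing \cite[Theorem $3.5$]{hkg}. The only cosmetic difference is that the paper pins down the precise theorem number in \cite{hkg}, whereas you refer to it more loosely.
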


\begin{proof} (\ref{marvel1})$\Leftrightarrow $(\ref{marvel8}) is part of Corollary \ref{clui5.3}. The fact that ${\rm Max}(A)$ and ${\rm Max}(G)$ are homeomorphic proves that (\ref{marvel8})$\Leftrightarrow $(\ref{marvel8g}). (\ref{marvel1g})$\Leftrightarrow $(\ref{marvel8g}) is part of \cite[Theorem $3.5$]{hkg}.\end{proof}

\begin{corollary}
Residuated lattices with BLP form a subclass of the class of Gelfand residuated lattices.\end{corollary}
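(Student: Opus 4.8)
The plan is to obtain this corollary as an immediate consequence of the characterization theorems just proved, so essentially no new work is needed. First I would recall Theorem \ref{p5.3}, which lists a number of conditions equivalent to ``$A$ has BLP''; among them is condition (\ref{p5.35}), namely ``$A$ is Gelfand and ${\rm Max}(A)$ is zero--dimensional''. Hence, if a residuated lattice $A$ has BLP, then in particular $A$ is Gelfand, and this is exactly the asserted inclusion of classes. Since a residuated lattice is, by definition, Gelfand precisely when it satisfies the equivalent conditions of Proposition \ref{gelfand}, nothing further is required.

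A second, self--contained route that I would also mention goes through the prime spectrum rather than the maximal one: by Theorem \ref{p5.2}, if $A$ has BLP then ${\rm Spec}(A)$ is strongly zero--dimensional; every strongly zero--dimensional space is normal (the observation following Proposition \ref{multcit}); and ``${\rm Spec}(A)$ is a normal topological space'' is condition (\ref{gelfand8}) of Proposition \ref{gelfand}, hence $A$ is Gelfand. Either derivation makes clear that all the substance lies in Theorems \ref{p5.2}/\ref{p5.3} and in Proposition \ref{gelfand}; the corollary itself presents no genuine obstacle.

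Finally, I would note that the inclusion is proper, i.e.\ the two classes do not coincide. By Example \ref{exmvbl} every MV--algebra is Gelfand, whereas by Corollary \ref{clui5.3} an MV--algebra has BLP if and only if its maximal spectrum is Boolean; thus any MV--algebra whose maximal spectrum fails to be zero--dimensional (for instance, one whose maximal spectrum is a nondegenerate connected space, such spectra being known to occur) is Gelfand without having BLP. One could also record that the class of Gelfand residuated lattices is itself strictly smaller than the class of all residuated lattices, as witnessed by Example \ref{nicig}.
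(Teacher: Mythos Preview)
Your proposal is correct and matches the paper's intent: the corollary is stated without proof precisely because it is an immediate consequence of Theorem \ref{p5.3}, whose condition (\ref{p5.35}) reads ``$A$ is Gelfand and ${\rm Max}(A)$ is zero--dimensional'', so BLP trivially implies Gelfand. Your second route via Theorem \ref{p5.2} and condition (\ref{gelfand8}) of Proposition \ref{gelfand} is also valid, and your remarks on the properness of the inclusion are a welcome addition not present in the paper.
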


\begin{corollary}\begin{itemize}
\item If $A$ is Gelfand and semilocal, then ${\rm Max}(A)$ is Boolean, thus also strongly zero--dimensional.
\item If $A$ is a semilocal MV--algebra, then ${\rm Max}(A)$ is Boolean, thus also strongly zero--dimensional.
\item If $A$ is a semilocal BL--algebra, then ${\rm Max}(A)$ is Boolean, thus also strongly zero--dimensional.\end{itemize}\label{ulica}\end{corollary}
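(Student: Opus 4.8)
The plan is to assemble this corollary from results already established, with essentially no new work. For the first bullet, suppose $A$ is Gelfand and semilocal. By Corollary \ref{inartblptop}, every semilocal Gelfand residuated lattice has BLP, so $A$ has BLP. Since $A$ is Gelfand, Corollary \ref{clui5.3} applies and gives at once that ${\rm Max}(A)$ is Boolean; moreover, since ${\rm Max}(A)$ is a $T_1$ compact space with the Stone topology, Corollary \ref{coolmix} (already folded into Corollary \ref{clui5.3}) shows that being Boolean is equivalent to being strongly zero--dimensional, so ${\rm Max}(A)$ is strongly zero--dimensional as well.

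For the second and third bullets I would reduce to the first one. By Example \ref{exmvbl}, every MV--algebra and every BL--algebra is a Gelfand residuated lattice; hence a semilocal MV--algebra (respectively, a semilocal BL--algebra) is in particular a semilocal Gelfand residuated lattice, and the conclusion is the first bullet applied to it. Alternatively, one can invoke directly the ``in particular'' clauses of Corollary \ref{inartblptop} (semilocal MV-- and BL--algebras have BLP) and of Corollary \ref{clui5.3} (for MV-- and BL--algebras, BLP is equivalent to ${\rm Max}(A)$ being Boolean, equivalently strongly zero--dimensional).

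I do not expect any genuine obstacle here: the statement is purely a repackaging of Corollary \ref{inartblptop} together with Corollary \ref{clui5.3}. The only point deserving a moment's attention is the passage from ``Boolean'' to ``strongly zero--dimensional'', which relies on ${\rm Max}(A)$ being $T_1$ and compact so that Corollary \ref{coolmix} is available; this holds for the maximal spectrum of any residuated lattice, hence in all three cases.
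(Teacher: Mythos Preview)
Your proposal is correct and follows essentially the same route as the paper, which simply cites Corollaries \ref{inartblptop} and \ref{coolmix} together with Theorem \ref{p5.3}; your use of Corollary \ref{clui5.3} in place of Theorem \ref{p5.3} is immaterial, since the former is just the Gelfand specialization of the latter and $A$ is already assumed Gelfand.
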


\begin{proof} By Corollaries \ref{inartblptop} and \ref{coolmix} and Theorem \ref{p5.3}.\end{proof}

\begin{proposition}
If $A$ is semisimple and the topological space ${\rm Max}(A)$ is Hausdorff, then $A$ is Gelfand.\label{propozitia3}\end{proposition}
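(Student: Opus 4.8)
The plan is to establish the Gelfand property in the form of condition~(\ref{gelfand4}) of Proposition~\ref{gelfand}, namely that every prime filter of $A$ is contained in a \emph{unique} maximal filter. Since every proper filter of $A$---in particular every prime filter---is contained in at least one maximal filter, it suffices to exclude the possibility that some prime filter $P$ is contained in two \emph{distinct} maximal filters. So I would argue by contradiction: suppose $P\in{\rm Spec}(A)$, $M,N\in{\rm Max}(A)$, $M\neq N$, and $P\subseteq M$, $P\subseteq N$.

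The next step uses the hypothesis that ${\rm Max}(A)$ is Hausdorff. Applying the Hausdorff axiom to the two points $M\neq N$ and then shrinking the separating open neighbourhoods to members of the basis $\{d(a)\mid a\in A\}$ of the Stone topology of ${\rm Max}(A)$, I obtain $a,b\in A$ with $M\in d(a)$, $N\in d(b)$ and $d(a)\cap d(b)=\emptyset$; in other words $a\notin M$, $b\notin N$, and no maximal filter of $A$ omits both $a$ and $b$.

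Now comes the translation from topology to algebra, where semisimplicity enters. Since $d(a)\cap d(b)={\rm Max}(A)\setminus(v(a)\cup v(b))={\rm Max}(A)\setminus v(a\vee b)=d(a\vee b)$ by Proposition~\ref{vsidelem},~(\ref{vsidelem2}), the disjointness gives $d(a\vee b)=\emptyset$, i.e.\ $a\vee b\in{\rm Rad}(A)$ by the basic properties of the map $d$. As $A$ is semisimple, ${\rm Rad}(A)=\{1\}$, hence $a\vee b=1$. Finally, $a\vee b=1\in P$ and $P$ is prime, so $a\in P$ or $b\in P$; in the first case $a\in P\subseteq M$ contradicts $a\notin M$, and in the second $b\in P\subseteq N$ contradicts $b\notin N$. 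This contradiction shows that $P$ is contained in a unique maximal filter, so $A$ is Gelfand.

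I do not expect a serious obstacle here: the argument is short. The only place requiring care is the middle step---passing correctly between the basis $\{d(a)\}$ of ${\rm Max}(A)$, the Boolean operations on these basic sets, and membership in ${\rm Rad}(A)$, and noticing exactly where semisimplicity must be invoked to upgrade ``$a\vee b\in{\rm Rad}(A)$'' to ``$a\vee b=1$''. Alternatively, the same computation shows that $D(a)$ and $D(b)$ are then disjoint open sets of ${\rm Spec}(A)$ separating $M$ and $N$, so one could instead conclude via condition~(\ref{gelfand14}) of Proposition~\ref{gelfand}.
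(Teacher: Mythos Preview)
Your argument is correct. Both you and the paper begin identically: pick distinct $M,N\in{\rm Max}(A)$, use Hausdorffness and the basis $\{d(a)\mid a\in A\}$ to obtain $a,b\in A$ with $M\in d(a)$, $N\in d(b)$, $d(a)\cap d(b)=\emptyset$. From here the two proofs diverge. The paper invokes Lemma~\ref{lema1}: semisimplicity makes ${\rm Max}(A)$ dense in ${\rm Spec}(A)$, so the nonempty open set $D(a)\cap D(b)$ would have to meet ${\rm Max}(A)$; since $d(a)\cap d(b)=\emptyset$, this forces $D(a)\cap D(b)=\emptyset$, and condition~(\ref{gelfand14}) of Proposition~\ref{gelfand} finishes. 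You instead translate $d(a)\cap d(b)=d(a\vee b)=\emptyset$ into $a\vee b\in{\rm Rad}(A)=\{1\}$, and then exploit primality of $P$ directly to reach a contradiction, verifying condition~(\ref{gelfand4}). Your route is slightly more elementary in that it avoids the topological density lemma and stays purely algebraic after the Hausdorff step; the paper's route, on the other hand, makes the separation visible already at the level of ${\rm Spec}(A)$, which is what your final ``alternatively'' remark recovers (indeed $a\vee b=1$ gives $D(a)\cap D(b)=D(a\vee b)=D(1)=\emptyset$).
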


\begin{proof} Assume that $A$ is semisimple and ${\rm Max}(A)$ is Hausdorff, and let $M$ and $N$ be distinct maximal filters of $A$. Then, according to Lemma \ref{lema1}, ${\rm Max}(A)$ is dense in ${\rm Spec}(A)$, thus the intersection between ${\rm Max}(A)$ and any non--empty open set of ${\rm Spec}(A)$ is non--empty. Also, there exist $a,b\in A$ such that $M\in d(a)$, $N\in d(b)$ and $d(a)\cap d(b)=\emptyset $, that is $D(a)\cap D(b)\cap {\rm Max}(A)=\emptyset $. Therefore $D(a)\cap D(b)=\emptyset $. Hence $M$ and $N$ are separated by the open sets $D(a)$ and $D(b)$ in ${\rm Spec}(A)$, which means that $A$ is Gelfand, as condition (\ref{gelfand14}) from Proposition \ref{gelfand} ensures us.\end{proof}

\begin{corollary} \begin{enumerate}
\item\label{corbool1} If $A$ is semisimple and ${\rm Max}(A)$ is Boolean, then $A$ has BLP.
\item\label{corbool2} ${\rm Max}(A)$ is Boolean iff $A/{\rm Rad}(A)$ has BLP.\end{enumerate}\label{corbool}\end{corollary}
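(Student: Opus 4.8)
The plan is to deduce both statements from results already in hand, principally Theorem \ref{p5.3}, Proposition \ref{propozitia3}, and the homeomorphism ${\rm Max}(A)\cong {\rm Max}(A/{\rm Rad}(A))$ of Lemma \ref{lema2}; no new computation should be needed.

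For (\ref{corbool1}), I would start from the fact that a Boolean topological space is, by definition, Hausdorff, so the hypothesis forces ${\rm Max}(A)$ to be Hausdorff. Since $A$ is assumed semisimple, Proposition \ref{propozitia3} then yields that $A$ is Gelfand. At that point $A$ is Gelfand and ${\rm Max}(A)$ is Boolean, i.e.\ condition (\ref{p5.38}) of Theorem \ref{p5.3} holds, so the equivalences of that theorem give condition (\ref{p5.31}): $A$ has BLP. This is just a chaining of cited facts, with no obstacle.

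For (\ref{corbool2}), I would argue both implications through the quotient $A/{\rm Rad}(A)$, using that this quotient is semisimple (recalled in Section \ref{preliminaries}) and that ${\rm Max}(A)$ and ${\rm Max}(A/{\rm Rad}(A))$ are homeomorphic by Lemma \ref{lema2}. Assume first that ${\rm Max}(A)$ is Boolean; then ${\rm Max}(A/{\rm Rad}(A))$ is Boolean as well, since compactness, the Hausdorff property and zero--dimensionality are topological invariants, and so part (\ref{corbool1}), applied to the semisimple residuated lattice $A/{\rm Rad}(A)$, shows that $A/{\rm Rad}(A)$ has BLP. Conversely, if $A/{\rm Rad}(A)$ has BLP, then Theorem \ref{p5.3}, implication (\ref{p5.31})$\Rightarrow$(\ref{p5.38}), gives that ${\rm Max}(A/{\rm Rad}(A))$ is Boolean, and transporting this back along the homeomorphism of Lemma \ref{lema2} shows that ${\rm Max}(A)$ is Boolean. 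The only points needing a word of care are that the hypotheses of Proposition \ref{propozitia3} and of part (\ref{corbool1}) really are met for $A/{\rm Rad}(A)$ — i.e.\ semisimplicity of the quotient, which is standard — and the invariance of ``Boolean'' under homeomorphism, which is immediate; I do not anticipate any genuine difficulty, as the corollary is essentially a repackaging of Theorem \ref{p5.3} via the radical quotient.
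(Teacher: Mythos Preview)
Your proposal is correct and follows essentially the same route as the paper: for (\ref{corbool1}) the paper also uses that Boolean $\Rightarrow$ Hausdorff, invokes Proposition \ref{propozitia3} to get Gelfand, and then applies Theorem \ref{p5.3}; for (\ref{corbool2}) the paper likewise passes to the semisimple quotient $A/{\rm Rad}(A)$ via Lemma \ref{lema2}, applying (\ref{corbool1}) for one direction and Theorem \ref{p5.3} for the other.
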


\begin{proof} (\ref{corbool1}) Assume that $A$ is semisimple and the topological space ${\rm Max}(A)$ is Boolean, thus Hausdorff. By Proposition \ref{propozitia3} and Theorem \ref{p5.3}, it follows that $A$ is Gelfand, hence $A$ has BLP.

\noindent (\ref{corbool2}) $\Rightarrow $: By (\ref{corbool1}), the fact that $A/{\rm Rad}(A)$ is semisimple and Lemma \ref{lema2}.

\noindent $\Leftarrow $: If $A/{\rm Rad}(A)$ has BLP, then ${\rm Max}(A/{\rm Rad}(A))$ is Boolean by Theorem \ref{p5.3}, hence ${\rm Max}(A)$ is Boolean by Lemma \ref{lema2}.\end{proof}

\begin{remark} According to Corollary \ref{coolmix}, the statements in Corollary \ref{corbool} hold if we replace ``Boolean`` by either of these kinds of topological space: zero--dimensional, strongly zero--dimensional, normal.\end{remark}

\begin{corollary}
If $A$ is semisimple, then the following are equivalent:

\begin{itemize}
\item $A$ has BLP;
\item ${\rm Max}(A)$ is zero--dimensional;
\item ${\rm Max}(A)$ is strongly zero--dimensional;
\item ${\rm Max}(A)$ is normal;
\item ${\rm Max}(A)$ is Boolean.\end{itemize}\label{sweet}\end{corollary}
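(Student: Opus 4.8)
The plan is to assemble this statement entirely from results already established, with essentially no new argument. First I would note that Corollary \ref{coolmix} already provides, for \emph{any} residuated lattice, the equivalence of the four topological conditions ``${\rm Max}(A)$ is zero--dimensional'', ``strongly zero--dimensional'', ``normal'' and ``Boolean''; this disposes of the equivalences among the last four bullets without using semisimplicity at all. Hence the whole corollary reduces to proving that, under the hypothesis that $A$ is semisimple, ``$A$ has BLP'' is equivalent to ``${\rm Max}(A)$ is Boolean''.

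For the direction $A$ has BLP $\Rightarrow$ ${\rm Max}(A)$ is Boolean, I would simply invoke Theorem \ref{p5.3}, the implication (\ref{p5.31})$\Rightarrow$(\ref{p5.38}): every residuated lattice with BLP is Gelfand and has ${\rm Max}(A)$ Boolean. This holds in general and requires no semisimplicity.

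For the converse, ${\rm Max}(A)$ is Boolean $\Rightarrow$ $A$ has BLP, the semisimplicity hypothesis is exactly what is needed, and this is precisely Corollary \ref{corbool}, (\ref{corbool1}). Equivalently, one may observe that semisimple means ${\rm Rad}(A)=\{1\}$, so that the quotient $A/{\rm Rad}(A)$ is isomorphic to $A$, and then quote Corollary \ref{corbool}, (\ref{corbool2}) directly. Either way, combining the two implications with the topological equivalences of Corollary \ref{coolmix} yields the stated chain of equivalences.

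There is no genuine obstacle here; the only point deserving attention is that the step ``${\rm Max}(A)$ Boolean $\Rightarrow A$ Gelfand'' really does rely on semisimplicity — it proceeds via Proposition \ref{propozitia3} (a Boolean space is Hausdorff, and a semisimple residuated lattice with Hausdorff maximal spectrum is Gelfand). Without the semisimplicity assumption this step, and hence the converse implication, can fail.
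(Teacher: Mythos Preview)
Your proposal is correct and follows essentially the same route as the paper: the paper's proof is the terse ``By Theorem \ref{p5.3} and Corollary \ref{corbool}'', which is exactly the combination you spell out (Theorem \ref{p5.3} for BLP $\Rightarrow$ ${\rm Max}(A)$ Boolean, Corollary \ref{corbool}, (\ref{corbool1}) for the converse under semisimplicity, and the topological equivalences either via Corollary \ref{coolmix} or as already contained in Theorem \ref{p5.3}). Your added remark about where semisimplicity is actually used is accurate and helpful.
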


\begin{proof} By Theorem \ref{p5.3} and Corollary \ref{corbool}.\end{proof}

\begin{corollary}
If $A$ is semisimple and semilocal, then the following are equivalent:

\begin{itemize}
\item $A$ satisfies $(\star )$;
\item $A$ has BLP;
\item $A$ is Gelfand;
\item ${\rm Max}(A)$ is zero--dimensional;
\item ${\rm Max}(A)$ is strongly zero--dimensional;
\item ${\rm Max}(A)$ is normal;
\item ${\rm Max}(A)$ is Boolean.\end{itemize}\label{sweeter}\end{corollary}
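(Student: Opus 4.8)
The plan is to treat ``$A$ has BLP'' as a central vertex and prove that, under the standing assumptions that $A$ is semisimple and semilocal, each of the remaining six conditions is equivalent to it. All of the implications needed are already available in the preceding results, so the argument will be a short assembly rather than anything new.

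First I would dispose of the four topological conditions. Since $A$ is semisimple, Corollary \ref{sweet} states exactly that ``$A$ has BLP'' is equivalent to each of ``${\rm Max}(A)$ is zero--dimensional'', ``${\rm Max}(A)$ is strongly zero--dimensional'', ``${\rm Max}(A)$ is normal'' and ``${\rm Max}(A)$ is Boolean''. Next I would handle $(\star )$: since $A$ is semilocal, the equivalences listed in Proposition \ref{dinartblp}, (\ref{dinartblp1}), yield that ``$A$ has BLP'' is equivalent to ``$A$ satisfies $(\star )$'' (and also to $A$ being a finite direct product of local residuated lattices, though this is not one of the conditions to be proved). It then remains only to show that, under our hypotheses, ``$A$ is Gelfand'' is equivalent to ``$A$ has BLP''. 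For the direction Gelfand $\Rightarrow$ BLP I would invoke Corollary \ref{inartblptop}: a semilocal Gelfand residuated lattice has BLP (indeed even satisfies $(\star )$). For the converse I would use the earlier corollary asserting that every residuated lattice with BLP is Gelfand. Chaining all these equivalences through the hub ``$A$ has BLP'', the seven assertions are mutually equivalent.

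I do not expect a genuine obstacle here: the statement is organisational, being a packaging of results already proved. The only point deserving a moment's care is the bookkeeping of which half of the hypothesis is used where --- semisimplicity is what powers the appeal to Corollary \ref{sweet}, semilocality is what powers the appeals to Proposition \ref{dinartblp} and Corollary \ref{inartblptop}, while the implication ``BLP $\Rightarrow$ Gelfand'' requires neither and holds for all residuated lattices.
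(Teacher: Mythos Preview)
Your proposal is correct and follows the same route as the paper, which simply cites Proposition \ref{dinartblp} and Corollaries \ref{inartblptop} and \ref{sweet}; you have merely unpacked these references and made explicit the extra ingredient BLP $\Rightarrow$ Gelfand (the unnamed corollary following Proposition \ref{marvel}, itself an immediate consequence of Theorem \ref{p5.3}), which the paper's terse proof leaves implicit. Your bookkeeping remark on which hypothesis drives which implication is accurate.
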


\begin{proof} By Proposition \ref{dinartblp} and Corollaries \ref{inartblptop} and \ref{sweet}.\end{proof}

\end{document}